\newtheorem{theorem}{Theorem}[section]
\newtheorem{lemma}{Lemma}[section]
\newtheorem{definite}{Definition}[section]
\newtheorem{remark}{Remark}[section]
\renewcommand{\section}{
         \setcounter{equation}{0}
         \@startsection {section}{1}{\z@}{-3.5ex plus -1ex minus
         -.2ex}{2.3ex plus .2ex}{\normalsize\bf}
}
\renewcommand{\subsection}{
         \@startsection {subsection}{1}{\z@}{-3.5ex plus -1ex minus
         -.2ex}{2.3ex plus .2ex}{\normalsize\bf}%{\center\normalsize\bf}
} \catcode`\@=12
\def\reals{{\rm\vrule depth0ex width.4pt\kern-.08em R}}
\def\bbbz{{\mathchoice {\hbox{$\sf\textstyle Z\kern-0.4em Z$}}
{\hbox{$\sf\textstyle Z\kern-0.4em Z$}} {\hbox{$\sf\scriptstyle
Z\kern-0.3em Z$}} {\hbox{$\sf\scriptscriptstyle Z\kern-0.2em Z$}}}}
\newcommand{\nc}{\newcommand}
\nc{\W}{{\bf W}} \nc{\A}{{\bf A}} \nc{\bL}{{\bf L}} \nc{\bH}{{\bf
H}} \nc{\C}{{\cal C}}
\def\eq#1{(\ref{e:#1})}
\def\elabel#1{\label{e:#1}}
\begin{document}
%\begin{titlepage}
\begin{center}
%\Large\bf Adapted Solution and Numerics by Random-Field Based
%Malliavin Calculus for A Unified B-SPDE with Jumps
\Large\bf Unified Systems of FB-SPDEs/FB-SDEs with Jumps/Skew
Reflections and Stochastic Differential Games~\footnote{This work
was presented in parts or as a whole at 8th World Congress in
Probability and Statistics, Istanbul (2012), American Institute of
Mathematical Sciences Annual Conference, Madrid (2014), 8th
International Congress of Industrial and Applied Mathematics, %ICIAM,
Beijing (2015), and at conferences in Suzhou (2011), Minneapolis
(2013), Chengdu (2013), Shanghai (2014), Jinan (2014), Nanjing
(2014), Beijing (2015), Chongqing (2015), Changsha (2015), Kunming
(2015), Shenyang (2015), etc. The presentations as invited plenary
talks in SCETs 2014/2015 and ICPDE 2015 were video-taped by Wanfang
Data and can be downloaded online. The author thanks the helpful
comments from the participants. An earlier version on the unified
B-SPDE was posted via arxiv in May of 2011 and on the unified
systems of FB-SPDEs/FB-SDEs with skew reflections was posted via
arxiv on June 16 of 2015.}
\end{center}
\begin{center}
\large\bf Wanyang Dai~\footnote{Supported by National Natural
Science Foundation of China with Grant No. 10971249 and Grant No.
11371010.}
\end{center}
\begin{center}
\small Department of Mathematics and State Key Laboratory of Novel
Software Technology\\
Nanjing University, Nanjing 210093, China\\
Email: nan5lu8@netra.nju.edu.cn\\
%Telephone: 011-86-25-8359-6831\\
%Fax:\hskip 1.15cm 011-86-25-8359-6831\\
%Phone: 86-25-8359-6831\\
%Fax:\hskip0.22in 86-25-8359-6831\\%8330-2728\\
Date: 14 September 2015
\end{center}

\vskip 0.1 in
\begin{abstract}
We study four systems and their interactions. First, we formulate a
unified system of coupled forward-backward stochastic {\it partial}
differential equations (FB-SPDEs) with L\'evy jumps, whose drift,
diffusion, and jump coefficients may involve partial differential
operators. A solution to the FB-SPDEs is defined by a 4-tuple
general dimensional random vector-field process evolving in time
together with position parameters over a domain (e.g., a hyperbox or
a manifold). Under an infinite sequence of generalized local linear
growth and Lipschitz conditions, the well-posedness of an adapted
4-tuple strong solution is proved over a suitably constructed
topological space. Second, we consider a unified system of FB-SDEs,
a special form of the FB-SPDEs, however, with {\it skew} boundary
reflections. Under randomized linear growth and Lipschitz conditions
together with a general completely-${\cal S}$ condition on
reflections, we prove the well-posedness of an adapted 6-tuple weak
solution with {\it boundary regulators} to the FB-SDEs by the
Skorohod problem and an oscillation inequality. Particularly, if the
spectral radii in some sense for reflection matrices are strictly
less than the unity, an adapted 6-tuple strong solution is
concerned. Third, we formulate a stochastic differential game (SDG)
with general number of players based on the FB-SDEs. By a solution
to the FB-SPDEs, we get a solution to the FB-SDEs under a given
control rule and then obtain a Pareto optimal Nash equilibrium
policy process to the SDG. Fourth, we study the applications of the
FB-SPDEs/FB-SDEs in queueing systems and quantum statistics while
we use them to motivate the SDG.\\

%Finally, we introduce how to use our
%B-SPDE to define generalized random-field based $g$-expectation.\\

%\noindent {\em Subject classifications:}
\noindent{\bf Key words and phrases:}
%Numerical Method, Error
%Estimation, Convergence,
stochastic (partial/ordinary) differential equation, L\'evy jump,
skew reflection, completely-${\cal S}$ condition, Skorohod problem,
oscillation inequality, stochastic differential game, Pareto optimal
Nash equilibrium, queueing network\\

\noindent{\bf Mathematics Subject Classification 2000:} 60H15,
60H10, 91A15, 91A23, 60K25
%\end{titlepage}
\end{abstract}

\section{Introduction}

We study four systems and their interactions: a unified system of
coupled forward-backward stochastic partial differential equations
(FB-SPDEs) with L\'evy jumps; a unified system of FB-SDEs, a special
form of the FB-SPDEs, however, with skew reflections; a stochastic
differential game (SDG) problem with general number of players based
on the FB-SDEs; and a system of queues and their associated
reflecting diffusion approximations. More precisely, there are four
interconnected and streamlined aims involved in our discussions.

The first aim is to study the adapted 4-tuple strong solution
$(U,V,\bar{V},\tilde{V})$ to the unified system of coupled FB-SPDEs
with L\'evy jumps with respect to time-position parameter $(t,x)\in
R_{+}\times D$,
\begin{eqnarray}
\;\;\;\;\;\;\;\;\;\;\left\{\begin{array}{ll}
U(t,x)&=G(x)+\int_{0}^{t}{\cal L}(s^{-},x,U,V,\bar{V},\tilde{V})ds\\
&\;\;\;\;\;\;\;\;\;\;\;\;+\int_{0}^{t}{\cal J}(s^{-},x,U,V,\bar{V},\tilde{V})dW(s)\\
&\;\;\;\;\;\;\;\;\;\;\;\;+\int_{0}^{t}\int_{{\cal Z}^{h}}{\cal
I}(s^{-},x,U,V,\bar{V},\tilde{V},z)\tilde{N}(\lambda ds,dz),\\
V(t,x)&=H(x)+\int_{t}^{\tau}\bar{{\cal L}}(s^{-},x,U,V,\bar{V},\tilde{V})ds\\
&\;\;\;\;\;\;\;\;\;\;\;\;\;+\int_{t}^{\tau}\bar{{\cal
J}}(s^{-},x,U,V,\bar{V},\tilde{V})dW(s)\\
&\;\;\;\;\;\;\;\;\;\;\;\;\;+\int_{t}^{\tau}\int_{{\cal
Z}^{h}}\bar{{\cal
I}}(s^{-},x,U,V,\bar{V},\tilde{V},z)\tilde{N}(\lambda ds,dz),
\end{array}
\right. \elabel{fbspdef}
\end{eqnarray}
where, $t\in[0,\tau]$ and $\tau\in[0,T]$ is a stopping time with
regard to a filtration defined later in the paper, ${\cal
Z}^{h}=R^{h}-\{0\}$ or $R_{+}^{h}$ for a positive integer $h$, and
$s^{-}$ denotes the corresponding left limit at time point $s$. In
particular, $D\in R^{p}$ with a given $p\in{\cal N}=\{1,2,...\}$ is
a connected domain, for examples, a $p$-dimensional box, a
$p$-dimensional ball (or a general manifold), a $p$-dimensional
sphere (or a general Riemannian manifold), or the whole Euclidean
space $R^{p}$ of real numbers itself. The F-SPDE in \eq{fbspdef} is
%\begin{eqnarray}
%U(t,x)&=&G(x)+\int_{0}^{t}{\cal L}(s^{-},x,U,V)ds
%\elabel{bspdef}\\
%&&\;\;\;\;\;\;\;\;\;+\int_{0}^{t}{\cal J}(s^{-},x,U,V)dW(s)
%\nonumber\\
%&&\;\;\;\;\;\;\;\;\;+\int_{0}^{t}\int_{z>0}{\cal
%I}(s^{-},x,U,V,z)\tilde{N}(\lambda ds,dz) \nonumber
%\end{eqnarray}
with the given initial random vector-field $G$, while the B-SPDE in
\eq{fbspdef} has
%\begin{eqnarray}
%V(t,x)&=&H(x)+\int_{t}^{T}\bar{{\cal L}}(s^{-},x,U,V)ds
%\elabel{fbspdef}\\
%&&\;\;\;\;\;\;\;\;\;+\int_{t}^{T}\left(\bar{{\cal J}}(s^{-},x,U,V)
%-\bar{V}(s^{-},x)\right)dW(s) \nonumber\\
%&&\;\;\;\;\;\;\;\;\;+\int_{t}^{T}\int_{z>0}\left(\bar{{\cal
%I}}(s^{-},x,U,V,z)-\tilde{V}(s^{-},x,z)\right) \tilde{N}(\lambda
%ds,dz) \nonumber
%\end{eqnarray}
the known terminal random vector-field $H$. In \eq{fbspdef}, $U$ and
$V$ are $r$-dimensional and $q$-dimensional random vector-field
processes respectively, $W$ is a standard $d$-dimensional Brownian
motion, and $\tilde{N}$ is a $h$-dimensional centered L\'evy jump
process (or centered subordinator). Furthermore, the partial
differential operators of $r$-dimensional vector ${\cal L}$,
$r\times d$-dimensional matrix ${\cal J}$, and $r\times
h$-dimensional matrix ${\cal I}$ are functionals of $U$, $V$,
$\bar{V}$, $\tilde{V}$, and their partial derivatives of up to the
$k$th order for $k\in\{0,1,2,3,...\}$. So do the partial
differential operators of $q$-dimensional vector $\bar{{\cal L}}$,
$q\times d$-dimensional matrix $\bar{{\cal J}}$, and $q\times
h$-dimensional matrix $\bar{{\cal I}}$. More precisely, for each
${\cal A}\in\{{\cal L},{\cal J}, \bar{{\cal L}},\bar{{\cal J}}\}$,
\begin{eqnarray}
{\cal A}(s,x,U,V,\bar{V},\tilde{V})&\equiv&{\cal
A}(s,x,(U,\frac{\partial U}{\partial
x_{1}},...,\frac{\partial^{k}U}{\partial x_{1}^{i_{1}}...{\partial
x_{p}^{i_{p}}}})(s,x),
\elabel{operatorsex}\\
&&\;\;\;\;\;\;\;\;\;\;\;\;(V,\frac{\partial V}{\partial
x_{1}},...,\frac{\partial^{k}V}{\partial x_{1}^{i_{1}}...{\partial
x_{p}^{i_{p}}}})(s,x),
\nonumber\\
&&\;\;\;\;\;\;\;\;\;\;\;\;(\bar{V},\frac{\partial\bar{V}}{\partial
x_{1}},...,\frac{\partial^{k}\bar{V}}{\partial
x_{1}^{i_{1}}...{\partial x_{p}^{i_{p}}}})(s,x),
\nonumber\\
&&\;\;\;\;\;\;\;\;\;\;\;\;(\tilde{V},\frac{\partial\tilde{V}}{\partial
x_{1}},...,\frac{\partial^{k}\tilde{V}}{\partial
x_{1}^{i_{1}}...{\partial x_{p}^{i_{p}}}})(s,x,\cdot),\cdot),
\nonumber
\end{eqnarray}
where the dot ``$\cdot$" in $\tilde{V}(s,x,\cdot)$ and its
associated partial derivatives denotes the integration in terms of
the so-called L\'evy measure. However, if ${\cal A}\in\{{\cal
I},\bar{{\cal I}}\}$, the last line on the right-hand side of
\eq{operatorsex} should be changed to the form,
\begin{eqnarray}
&&(\tilde{V},\frac{\partial\tilde{V}}{\partial
x_{1}},...,\frac{\partial^{k}\tilde{V}}{\partial
x_{1}^{i_{1}}...{\partial x_{p}^{i_{p}}}})(s,x,z),z,\cdot).
\nonumber
\end{eqnarray}
\begin{figure}[tbh]%\label{queueI}
\begin{center}
%\centerline{
\epsfxsize=3.5in\epsfbox{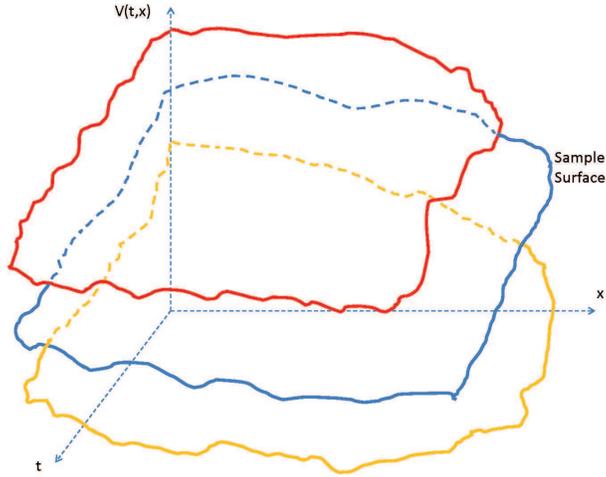}%}
\caption{\small Sample Surface Solution to the FB-SPDEs}
\label{samplesurface}
\end{center}
\end{figure}

Note that our partial differential operators presented in
\eq{operatorsex} can be general-nonlinear and high-order, e.g.,
\begin{eqnarray}
&&{\cal A}(s,x,U,V,\bar{V},\tilde{V})=f(\frac{\partial^{k}
V(t,x)}{\partial x_{1}^{i_{1}}...\partial x_{p}^{i_{p}}})
\nonumber
\end{eqnarray}
for  a general nonlinear functional $f$, where $r,i_{1},...,i_{p}$
are nonnegative integers satisfying $i_{1}+...+i_{p}=k$ with
$k\in\{0,1,2,3,...\}$. Furthermore, the initial random vector-field
$G$, the terminal random vector-filed $H$, and the 4-tuple solution
process $(U,V,\bar{V},\tilde{V})$ can be complex-valued.

Under an infinite sequence of generalized local linear growth and
Lipschitz conditions, we prove the existence and uniqueness of an
adapted 4-tuple strong solution to the FB-SPDEs in a suitably
constructed functional topological space. The solution to the
unified system in \eq{fbspdef} can be interpreted in a sample
surface manner with time-position parameter $(t,x)$ (see, e.g.,
$V(t,x)$ in Figure~\ref{samplesurface} for such an example). The
quite involved technical proof developed in this paper is extended
from our earlier work summarized in Dai~\cite{dai:newcla} (arxiv,
2011) for a unified B-SPDE.

More precisely, the newly unified system of coupled FB-SPDEs in
\eq{fbspdef} covers many existing forward and/or backward SDEs/SPDEs
as special cases, where the partial differential operators are taken
to be special forms. For examples, specific single-dimensional
strongly nonlinear F-SPDE and B-SPDE driven solely by Brownian
motions can be respectively derived for the purpose of
optimal-utility based portfolio choice (see, e.g, Musiela and
Zariphopoulou~\cite{muszar:stopar}). Here, the strong nonlinearity
is in the sense addressed by Lions and
Souganidis~\cite{liosou:notaux} and Pardoux~\cite{par:stopar}.
Furthermore, the single-dimensional stochastic
Hamilton-Jacobi-Bellman (HJB) equations are also examples of our
unified system in \eq{fbspdef}, which are specific B-SPDEs (see,
e.g., $\emptyset$ksendal {\em et al.}~\cite{okssul:stohjb} and
references therein). Note that the proof of the well-posedness
concerning solution to the B-SPDE derived in Musiela and
Zariphopoulou~\cite{muszar:stopar} and solution to the HJB equation
derived in $\emptyset$ksendal {\em et al.}~\cite{okssul:stohjb} is
covered by the study in Dai~\cite{dai:newcla} (arxiv, 2011) although
the authors in both \cite{muszar:stopar} and $\emptyset$ksendal {\em
et al.}~\cite{okssul:stohjb} claim it as an open problem. The proof
of the well-posedness about solution to the F-SPDE derived in
Musiela and Zariphopoulou~\cite{muszar:stopar} is covered by the
even more unified discussion for the coupled FB-SPDEs in
\eq{fbspdef} of this paper. Actually, partial motivations to enhance
the unified B-SPDE in Dai~\cite{dai:newcla} (arxiv, 2011) to the
coupled FB-SPDEs in \eq{fbspdef} are from the conference
discussion~\cite{daizar:dis} during 45 minutes invited talk
presented by Zariphopoulou in ICM 2014, where the current author
claimed that the well-posedness of solution to the F-SPDE in
\cite{muszar:stopar} can be proved by the method developed in
Dai~\cite{dai:newcla} (arxiv, 2011). Besides these existing
examples, our motivations to study the coupled FB-SPDEs in
\eq{fbspdef} are also from optimal portfolio management in finance
(see, e.g., Dai~\cite{dai:meavar,dai:meahed}), and multi-channel (or
multi-valued) image regularization such as color images in computer
vision and network applications (see, e.g., Caselles {\em et
al.}~\cite{cassap:vecmed}). In this part, we also show the usages of
our unified system in \eq{fbspdef} in heat diffusions and quantum
Hall/anomalous Hall effects as two illustrative examples to support
our first aim. Mathematically, we refine a stochastic
Dirichlet-Poisson problem from heat diffusions and use stochastic
Schr$\ddot{o}$dinger equation as model for Hall effects in quantum
statistics.

It is worth to point out that the proving methodology developed in
the current paper and its early version in Dai~\cite{dai:newcla}
(arxiv, 2011) is aimed to provide a general theory and framework to
show the well-posedness of a unified general system class of the
coupled FB-SPDEs in \eq{fbspdef}. However, some specific forms of
the FB-SPDEs in \eq{fbspdef} (either in forward manner or in
backward manner) may be solved by alternative techniques, e.g., the
author in his Fields Metal awarded work (Hairer~\cite{hai:solkpz}
and ICM 2014) solves the KPZ equation by rough path technology, and
furthermore, the related rough path theory can deal with the lack of
either temporal or spatial regularity (see, e.g.,
Hairer~\cite{hai:solkpz} and reference therein).

The second aim of the paper is to prove the well-posedness of an
adapted 6-tuple weak solution $((X,Y),(V,\bar{V},\tilde{V},F))$ with
2-tuple boundary regulator $(Y,F)$ to the (possible) non-Markovian
system of coupled FB-SDEs with L\'evy jumps and skew reflections
under a given control rule $u$,
\begin{eqnarray}
&&\left\{\begin{array}{ll} \left\{\begin{array}{ll}
       X(t)&=\;\;b(t^{-},X(t^{-}),V(t^{-}),\bar{V}(t^{-}),\tilde{V}(t^{-},\cdot),u(t^{-},X(t^{-}),\cdot)dt\\
           &\;\;\;+\sigma(t^{-},X(t^{-}),V(t^{-}),\bar{V}(t^{-}),\tilde{V}(t^{-},\cdot),u(t^{-},X(t^{-})),\cdot)dW(t)\\
           &\;\;\;+\int_{{\cal Z}^{h}}\eta(t^{-},X(t^{-}),V(t^{-}),\bar{V}(t^{-}),\tilde{V}(t^{-},z),
           u(t^{-},X(t^{-})),z,\cdot)\tilde{N}(dt,dz)\\
           &\;\;\;+RdY(t),\\
       X(0)&=\;\;x,\\
       Y_{i}(t)&=\;\;\int_{0}^{t}I_{D_{i}}(X(s))dY_{i}(s);
      \end{array}
\right.\\
\left\{\begin{array}{ll}
        V(t)&=\;\;c(t^{-},X(t^{-}),V(t^{-}),\bar{V}(t^{-}),\tilde{V}(t^{-},\cdot),u(t^{-},X(t^{-}),\cdot)dt\\
             &\;\;\;-\alpha(t^{-},X(t^{-}),V(t^{-}),\bar{V}(t^{-}),\tilde{V}(t^{-},\cdot),u(t^{-},X(t^{-})),\cdot)dW(t)\\
             &\;\;\;-\int_{{\cal Z}^{h}}\zeta(t^{-},X(t^{-}),V(t^{-}),\bar{V}(t^{-}),\tilde{V}(t^{-},z),
             u(t^{-},X(t^{-})),z,\cdot)\tilde{N}(dt,dz)\\
             &\;\;\;-SdF(t),\\
        %V_{0}(t)&=\;\;\sum_{l=1}^{q}V_{l}(t),\\
        V(T)&=\;\;H(X(T),\cdot),\\
        F_{i}(t)&=\;\;\int_{0}^{t}I_{\bar{D}_{i}}(V(s))dF_{i}(s).
      \end{array}
\right.
\end{array}
\right.
\elabel{bsdehjb}
\end{eqnarray}
In \eq{bsdehjb}, $X$ is a $p$-dimensional process governed by the
F-SDE with skew reflection matrix $R$ and $V$ is a $q$-dimensional
process governed by the B-SDE with skew reflection matrix $S$.
Furthermore, $Y$ can increase only when $X$ is on a boundary $D_{i}$
with $i\in\{1,...,b\}$ and $F$ can increase only when $V$ is on a
boundary $\bar{D}_{i}$ with $\in\{1,...,\bar{b}\}$, where $b$ and
$\bar{b}$ are two nonnegative integers.
%\begin{eqnarray}
%\;\;\;\;\;\;\;\;\left\{\begin{array}{ll}
%       dX(t)&=\;\;b(t^{-},X(t^{-}),V(t^{-}),\bar{V}(t^{-}),\tilde{V}(t^{-},\cdot),u(t^{-},X(t^{-}))dt\\
%           &\;\;\;\;+\sigma(t^{-},X(t^{-}),V(t^{-}),\bar{V}(t^{-}),\tilde{V}(t^{-},\cdot),u(t^{-},X(t^{-})),z)dW(t)\\
%           &\;\;\;\;+\int_{z>0}\eta(t^{-},X(t^{-}),V(t^{-}),\bar{V}(t^{-}),\tilde{V}(t^{-},\cdot),u(t^{-},X(t^{-})),z)\tilde{N}(dt,dz)\\
%           &\;\;\;\;+RdY(t),\\
%       X(0)&=x,
%      \end{array}
%\right. \elabel{bsdehjbI}
%\end{eqnarray}
Both $Y$ and $F$ are the regulating processes with possible jumps to
push $X$ and $V$ back into the state spaces $D$ and $\bar{D}$
respectively. They are parts of the 6-tuple solution to \eq{bsdehjb}
and determined by solution pairs to the well-known Skorohod problem
(see, e.g., Dai~\cite{dai:broapp}, Dai and Dai~\cite{daidai:heatra},
or Section~\ref{gfbproof} of the current paper for such a
definition). Thus, we call them as Skorohod regulators (see,
Figure~\ref{reflection} for such an example).
%It is worth to mention that the weak convergence method developed in
%Dai~\cite{dai:broapp}.
Note that, comparing with the unified system in \eq{fbspdef}, the
coefficients appeared in \eq{bsdehjb} do not contain any partial
derivative operator but the FB-SDEs themselves involve skew boundary
reflections. The proof for the well-posedness of an adapted 6-tuple
weak solution to the FB-SDEs is based on two general conditions. The
first one is a general completely-${\cal S}$ condition (see, e.g.,
Dai~\cite{dai:broapp}, Dai and Dai~\cite{daidai:heatra}, and
Figure~\ref{reflection} for an illustration). The non-uniqueness of
solution to an associated Skorohod problem under this condition is
one of the major difficulties in the proof.
\begin{figure}[tbh]%\label{queueI}
\begin{center}
%\centerline{
\epsfxsize=3.5in\epsfbox{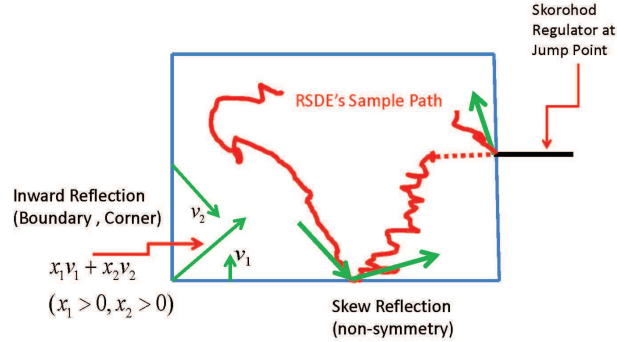}%}
\caption{\small Skew and Inward Reflection with Skorohod Regulator
under Completely-${\cal S}$ Condition} \label{reflection}
\end{center}
\end{figure}
The second one is the generalized linear growth and Lipschitz
conditions, where the conventional growth and Lipschitz constant is
replaced by a possible unbounded but mean-squarely integrable
adapted stochastic process (see, e.g.,
Dai~\cite{dai:meavar,dai:meahed}). In particular, if the
completely-${\cal S}$ condition becomes more strict, e.g., with
additional requirements that the spectral radii in certain sense for
both reflection matrices are strictly less than the unity, a unique
adapted 6-tuple strong solution will be concerned.

Concerning coupled FB-SDEs, it motivates a hot research area (see,
e.g., $\emptyset$ksendal {\em et al.}~\cite{okssul:stohjb} about the
discussion of coupled FB-SDEs with no boundary reflection, Karatzas
and Li~\cite{karli:bsdapp} about the study of Brownian motion driven
B-SDE with reflection, and references therein). However, to our best
knowledge, the coupled system in \eq{bsdehjb} with double skew
reflection matrices and the well-posedness study in terms of an
adapted 6-tuple weak solution with L\'evy jumps and under a general
completely-${\cal S}$ condition through the Skorohod problem are new
and for the first time in this area.

The third aim of the paper involves two folds. On the one hand, we
use the 4-tuple solution to the coupled FB-SPDEs in \eq{fbspdef} to
obtain an adapted 6-tuple solution to the system in \eq{bsdehjb}; On
the other hand, we use the obtained adapted 6-tuple solution to
determine a Pareto optimal Nash equilibrium policy process to a
non-zero-sum SDG problem in \eq{gameopto}, which is newly formulated
by the FB-SDEs in \eq{bsdehjb}. In this game, there are $q$-players
and each player $l\in\{1,...,q\}$ has his own value function
$V^{u}_{l}$ subject to the system in \eq{bsdehjb} under an
admissible control policy $u$. Every player $l$ chooses an optimal
policy to maximize his own value function over an admissible policy
set ${\cal C}$ while the summation of all value functions is also
maximized, i.e.,
\begin{eqnarray}
&&\sup_{u\in{\cal C}}V^{u}_{l}(0)=V^{u^{*}}_{l}(0) \elabel{gameopto}
\end{eqnarray}
for each $l\in\{0,1,...,q\}\}$, where,
\begin{eqnarray}
&&V^{u}_{0}(t)=\sum_{l=1}^{q}V^{u}_{l}(t). \elabel{gameoptoI}
\end{eqnarray}
Note that the total value function $V^{u}_{0}(0)$ does not have to
be a constant (e.g., zero), or in other words, the game is not
necessarily a zero-sum one.

The contribution and literature review of the study associated with
the game in \eq{gameopto}-\eq{gameoptoI} for the third aim can be
summarized as follows. One of the important solution methods for SDE
based optimal control is the dynamic programming. In general, this
method is related to a special case of the unified system in
\eq{fbspdef} (or its earlier unified B-SPDE form in
Dai~\cite{dai:newcla} (arxiv, 2011)), e.g., the specific B-SPDE with
$q=1$ (called stochastic HJB equation) in Peng~\cite{pen:stoham}
with no jumps and $\emptyset$ksendal {\em et
al.}~\cite{okssul:stohjb} with jumps. Here, we extend the
discussions in Peng~\cite{pen:stoham} and $\emptyset$ksendal {\em et
al.}~\cite{okssul:stohjb} to a system of generalized coupled
forward-backward oriented stochastic HJB equations with jumps
corresponding to the case that $q>1$. More importantly, this system
provides an effective way to resolve a non-zero-sum SDG problem with
jumps and general number of $q$ players, which subjects to a
non-Markovian system of coupled FB-SDEs with L\'evy jumps and skew
reflections (see, e.g., Figure~\ref{gameI} for such a game platform
(partially adapted from Dai~\cite{dai:traneu})).
\begin{figure}[tbh]%\label{gameI}
\begin{center}
%\centerline{
\epsfxsize=3.5in\epsfbox{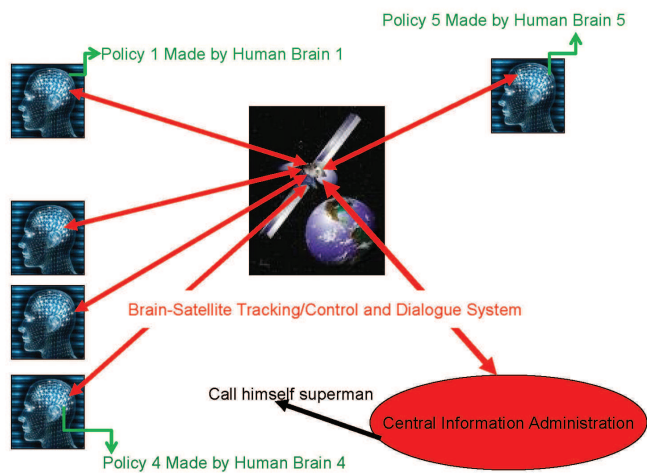}%}
\caption{\small A $5$-player game platform based on brain and
satellite communication} \label{gameI}
\end{center}
\end{figure}
By a solution to the FB-SPDEs in \eq{fbspdef}, we determine a
solution to the FB-SDEs in \eq{bsdehjb} under a given control rule
and then obtain a Pareto optimal Nash equilibrium policy process to
the non-zero-sum SDG problem in \eq{gameopto}. Note that, the
concept and technique concerning the non-zero-sum SDG and Pareto
optimality used in this paper is refined and generalized from
Dai~\cite{dai:optrat} and Karatzas and Li~\cite{karli:bsdapp}.

The fourth aim of the paper involves three folds. First,we study
some queueing networks (see, e.g.,  Figure~\ref{queueI})
\begin{figure}[tbh]%\label{queueI}
\begin{center}
%\centerline{
\epsfxsize=3.5in\epsfbox{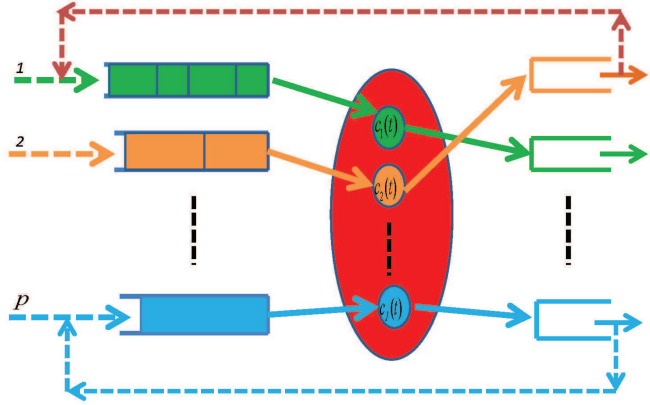}%}
\caption{\small A queueing network system with $p$-job classes}
\label{queueI}
\end{center}
\end{figure}
whose dynamics (e.g., queue length process) is governed by specific
forms of the FB-SDEs in \eq{bsdehjb}. These forms can be a L\'evy
driven SDE, a $p$-dimensional reflecting Brownian motion (RBM) (see,
e.g., Dai~\cite{dai:broapp}, Dai and Dai~\cite{daidai:heatra}, Dai
and Jiang~\cite{daijia:stoopt}), or a reflecting diffusion with
regime switching (RDRS) (see, e.g., Dai~\cite{dai:optrat}). The
reflecting diffusion is the functional limit of a sequence of
physical queueing processes under diffusive scaling, a general
completely-${\cal S}$ boundary reflection constraint, and a
well-known heavy traffic condition (an analogous treatment as the
one for ``infinite constant" in the KPZ equation (see, e.g.,
Hairer~\cite{hai:solkpz})). In reality, the characteristics of
L\'evy driven networks may be used to model or approximate more
general batch-arrival and batch-service queueing networks. Second,
we discuss how to use the queueing systems and their associated
reflecting diffusion approximations to motivate the SDG problem. The
criterion for the players in the game can be the queue length based
performance optimization ones or queueing related cost/profit
optimization ones. Third, we study the applications of the FB-SPDEs
presented by \eq{fbspdef} in the queueing networks. There are two
types of equations involved. One is the Kolmogorov's equation or
Fokker-Planck's formula oriented PDEs/SPDEs, which are corresponding
to the distributions of queueing length processes under given
network control rules. This type of equations are mainly used to
estimate the performance measures of the queueing networks. Another
type of equations are the HJB equation oriented PDES/SPDES, which
are mainly used to obtain optimal control rules over the set of
admissible strategies for the queueing networks.

The remainder of the paper is organized as follows. In
Section~\ref{uniexist}, we introduce suitable functional topological
space and state conditions required for our main theorems to
guarantee the well-posedness of an adapted 4-tuple strong solution
to the unified system of coupled FB-SPDEs in \eq{fbspdef}. In
Section~\ref{fbsdes}, we study the unified system of coupled FB-SDEs
with L\'evy jumps and skew reflections in \eq{bsdehjb} and present
the well-posedness theorem. In particular, we establish the solution
connection between the FB-SPDEs and the FB-SDEs. In
Section~\ref{queuesdg}, we formally formulate the FB-SDEs based SGE
problem in \eq{gameopto} and determine the Pareto optimal Nash
equilibrium policy process by a system of generalized stochastic HJB
equations (a particular form of the coupled FB-SPDEs). Related
applications in queueing networks are also discussed. Finally, in
Sections~\ref{uniextproof}-\ref{gtheoremproof}, we develop theory to
prove our main theorems.

\section{The Unified System of Coupled FB-SPDEs with L\'evy Jumps}\label{uniexist}

First of all, let $(\Omega,{\cal F},P)$ be a fixed complete
probability space. Then, we define a standard $d$-dimensional
Brownian motion $W\equiv\{W(t),t\in[0,T]\}$ for a given
$T\in[0,\infty)$ with $W(t)=(W_{1}(t),...,W_{d}(t))'$ and a
$h$-dimensional general L\'evy pure jump process (or special
subordinator) $L\equiv\{L(t),t\in[0,T]\}$ with
$L(t)\equiv(L_{1}(t),...,L_{h}(t))'$ on the space (see, e.g.,
Applebaum~\cite{app:levpro}, Bertoin~\cite{ber:levpro}, and
Sato~\cite{sat:levpro}). Note that the prime appeared in this paper
is used to denote the corresponding transpose of a matrix or a
vector. Furthermore, $W$, $L$, and their components are supposed to
be independent of each other. For each
$\lambda=(\lambda_{1},...\lambda_{h})'>0$, which is called a
reversion rate vector in many applications, we let $L(\lambda
s)=(L_{1}(\lambda_{1}s),...,L_{h}(\lambda_{h}s))'$. Then, we denote
a filtration by $\{{\cal F}_{t}\}_{t\geq 0}$ with ${\cal
F}_{t}\equiv\sigma\{{\cal G},W(s),L(\lambda s): 0\leq s\leq t\}$ for
each $t\in[0,T]$, where ${\cal G}$ is $\sigma$-algebra independent
of $W$ and $L$. In addition, let $I_{A}(\cdot)$ be the index
function over the set $A$ and $\nu_{i}$ for each $i\in\{1,...,h\}$
be a L\'{e}vy measure. Then, we use $N_{i}((0,t]\times
A)\equiv\sum_{0<s\leq t} I_{A}(L_{i}(s)-L_{i}(s^{-}))$ to denote a
Poisson random measure with a deterministic, time-homogeneous
intensity measure $ds\nu_{i}(dz_{i})$. Thus, each $L_{i}$ can be
represented by (see, e.g., Theorem 13.4 and Corollary 13.7 in pages
237 and 239 of Kallenberg~\cite{kal:foumod})
\begin{eqnarray}
&&L_{i}(t)=a_{i}(t)+\int_{(0,t]}\int_{{\cal
Z}}z_{i}N_{i}(\lambda_{i}ds,dz_{i}), \;t\geq 0. \elabel{subordrep}
\end{eqnarray}
For convenience, we take the constant $a_{i}$ to be zero.

In the subsequent two subsections, we first study the unified system
in \eq{fbspdef}, which is real-valued with a closed position
parametric domain. Then, we extend the discussion to a
complex-valued system with an open position parametric domain.

\subsection{The Real-Valued System with Closed Position Parametric Domain}\label{boundedd}

We let $D\in R^{p}$ be a closed position parametric domain and use
$C^{k}(D,R^{l})$ for each $k\in{\cal N}$ and $l\in\{r,q\}$ to denote
the Banach space of all functions $f$ having continuous derivatives
up to the order $k$ with the uniform norm for each $f$ in this
space,
\begin{eqnarray}
&&\|f\|_{C^{k}(D,l)}=\max_{c\in\{0,1,...,k\}}\max_{j\in\{1,...,r(c)\}}
\sup_{x\in D}\left|f^{(c)}_{j}(x)\right|. \elabel{fckupd}
\end{eqnarray}
The $r(c)$ in \eq{fckupd} for each $c\in\{0,1,...,k\}$ is the total
number of the partial derivatives of the order $c$
\begin{eqnarray}
&&f^{(c)}_{r,i_{1}...i_{p}}(x)=\frac{\partial^{c}f_{r}(x)}{\partial
x_{1}^{i_{1}}...\partial x_{p}^{i_{p}}} \elabel{difoperatorI}
\end{eqnarray}
with $i_{l}\in\{0,1,...,c\}$, $l\in\{1,...,p\}$, $r\in\{1,...,l\}$,
and $i_{1}+...+i_{p}=c$. Here, we remark that, whenever the partial
derivative on the boundary ${\partial D}$ is concerned, it is
defined in a one-side manner. In addition, let
\begin{eqnarray}
f_{i_{1},...,i_{p}}^{(c)}&\equiv&(f_{1,i_{1},...,i_{p}}^{(c)},...,
f_{q,i_{1},...,i_{p}}^{(c)}),
\elabel{difoperatoro}\\
f^{(c)}(x)&\equiv&(f_{1}^{(c)}(x),...,f_{r(c)}^{(c)}(x)),
\elabel{difoperator}
\end{eqnarray}
where each $j\in\{1,...,r(c)\}$ corresponds to a $p$-tuple
$(i_{1},...,i_{p})$ and a $r\in\{1,...,l\}$. Then, we use
$C^{\infty}(D,R^{l})$ to denote the Banach space
\begin{eqnarray}
&&C^{\infty}(D,R^{l})\equiv
\left\{f\in\bigcap_{c=0}^{\infty}C^{c}(D,R^{l}),
\|f\|_{C^{\infty}(D,l)}<\infty\right\}, \elabel{cinfity}
\end{eqnarray}
where
\begin{eqnarray}
&&\|f\|^{2}_{C^{\infty}(D,q)}=\sum_{c=0}^{\infty}\xi(c)
\|f\|^{2}_{C^{c}(D,l)} \elabel{inftynorm}
\end{eqnarray}
for some discrete function $\xi(c)$ in terms of $c\in\{0,1,2,...\}$,
which is fast decaying in $c$. For convenience, we take
$\xi(c)=\frac{1}{((c^{10})!)(\eta(c)!)e^{c}}$ with
\begin{eqnarray}
&&\eta(c)=[\max\{|x_{1}|+...+|x_{p}|,x\in D\}]^{c}, \nonumber
\end{eqnarray}
where the notation $[]$ denotes the summation of the unity and the
integer part of a real number.

Next, let $L^{2}_{{\cal F}}([0,T],C^{\infty}(D;R^{l}))$ denote the
set of all $R^{l}$-valued (or called $C^{\infty}(D;R^{l})$-valued)
measurable random vector-field processes $Z(t,x)$ adapted to
$\{{\cal F}_{t},t\in[0,T]\}$ for each $x\in D$, which are in
$C^{\infty}(D,R^{l})$ for each fixed $t\in[0,T]$), such that
\begin{eqnarray}
&&E\left[\int_{0}^{T}\|Z(t)\|^{2}_{C^{\infty}(D,l)}dt\right]<\infty.
\elabel{adaptednormI}
\end{eqnarray}
In particular, let $L^{2}_{{\cal
G}_{l}}(\Omega,C^{\infty}(D;R^{l}))$ with $l\in\{r,q\}$ denote the
set of all $R^{l}$-valued random vector-fields $\zeta(x)$ that are
${\cal G}_{l}$-measurable for each $x\in D$ and satisfy
\begin{eqnarray}
&&\|\zeta\|^{2}_{L^{2}_{{\cal G}}(\Omega,C^{\infty}(D,R^{l}))}\equiv
E\left[\|\zeta\|^{2}_{C^{\infty}(D,l)}\right]<\infty,
\elabel{initialx}
\end{eqnarray}
where ${\cal G}_{r}={\cal G}$ and ${\cal G}_{q}={\cal F}_{T}$. In
addition, let $L^{2}_{p}([0,T]\times {\cal Z}^{h},$
$C^{\infty}(D,R^{l\times h}))$ be the set of all $R^{l\times
h}$-valued random vector-field processes denoted by
$\tilde{V}(t,x,z)=$ $(\tilde{V}_{1}(t,x,z_{1}),$ $...,$
$\tilde{V}_{h}(t,x,z_{h}))$, which are predictable for each $x\in D$
and $z\in{\cal Z}^{h}$ and are endowed with the norm
\begin{eqnarray}
&&E\left[\sum_{i=1}^{h}\int_{0}^{T}\int_{{\cal Z}}
\left\|\tilde{V}_{i}(t,z_{i})\right\|^{2}_{C^{\infty}(D,l)}
\nu_{i}(dz_{i})dt\right]<\infty. \elabel{normsI}
\end{eqnarray}
Thus, we can define
\begin{eqnarray}
{\cal Q}^{2}_{{\cal F}}([0,T]\times D) &\equiv&\;\;\;L^{2}_{{\cal
F}}([0,T],C^{\infty}(D,R^{r}))\elabel{bunisoI}\\
&&\times L^{2}_{{\cal F}}([0,T],C^{\infty}(D,R^{q}))
\nonumber\\
&&\times L^{2}_{{\cal F},p}([0,T],C^{\infty}(D,R^{q\times d}))
\nonumber\\
&&\times L^{2}_{p}([0,T]\times{\cal Z}^{h},C^{\infty}(D,R^{q\times
h})). \nonumber
\end{eqnarray}
Finally, let
\begin{eqnarray}
&&L^{2}_{\nu}({\cal Z}^{h},C^{c}(D,R^{q\times h}))
\elabel{lvrqh}\\
&&\equiv\left\{\tilde{v}: {\cal Z}^{h}\rightarrow C^{c}(D,R^{q\times
h}),\;\sum_{i=1}^{h}\int_{{\cal Z}}
\left\|\tilde{v}_{i}(z_{i})\right\|^{2}_{C^{c}(D,q)}
\nu_{i}(dz_{i})<\infty\right\} \nonumber
\end{eqnarray}
that is endowed with the norm
\begin{eqnarray}
&&\|\tilde{v}\|_{\nu,c}^{2}\equiv\sum_{i=1}^{h}\int_{{\cal Z}}
\left\|\tilde{v}_{i}(z_{i})\right\|^{2}_{C^{c}(D,q)}
\lambda_{i}\nu_{i}(dz_{i}) \elabel{bunisoIn}
\end{eqnarray}
for any $\tilde{v}\in L^{2}_{\nu}({\cal Z}^{h},C^{c}(D,R^{q\times
h}))$ and $c\in\{0,1,...,\infty\}$. Furthermore, define
\begin{eqnarray}
{\cal V}^{\infty}(D)
&\equiv&C^{\infty}(D,R^{r})
\elabel{primitivespace}\\
&&\times C^{\infty}(D,R^{q})
\nonumber\\
&&\times C^{\infty}(D,R^{q\times d})
\nonumber\\
&&\times \bar{L}^{2}_{\nu}({\cal Z}^{h},C^{\infty}(D,R^{q\times
h})). \nonumber
\end{eqnarray}

In the sequel, we let $\|A\|$ be the largest absolute value of
entries (or components) of the given matrix (or vector) $A$.
Furthermore, for each $s\in[0,T]$ and $z\in{\cal Z}^{h}$, we let
\begin{eqnarray}
\tilde{N}(\lambda ds,dz)&=&(\tilde{N}_{1}(\lambda_{1}ds,dz_{1}),...,
\tilde{N}_{h}(\lambda_{h}ds,dz_{h}))', \elabel{barvvII}
\end{eqnarray}
where
\begin{eqnarray}
\tilde{N}_{i}(\lambda_{i}ds,dz_{i})
=N_{i}(\lambda_{i}ds,dz_{i})-\lambda_{i}ds\nu_{i}(dz_{i})
\elabel{centeredpm}
\end{eqnarray}
for each $i\in\{1,...,h\}$. Then, we impose some conditions to
guarantee the unique existence of an adapted 4-tuple strong solution
to the unified system in \eq{fbspdef}.

First, for each ${\cal A}\in\{{\cal L},\bar{\cal L},{\cal
J},\bar{{\cal J}}\}$, every $c\in\{0,1,2,...,\}$, and any
$(u^{i},v^{i},\bar{v}^{i},\tilde{v}^{i})\in{\cal V}^{\infty}(D)$
with $i\in\{1,2\}$, we define
\begin{eqnarray}
&&\Delta{\cal
A}^{(c)}(s,x,u^{1},v^{1},\bar{v}^{1},\tilde{v}^{1},u^{2},v^{2},
\bar{v}^{2},\tilde{v}^{2})\nonumber\\
&&\equiv {\cal
A}^{(c)}(s,x,u^{1},v^{1},\bar{v}^{1},\tilde{v}^{1})-{\cal
A}^{(c)}(s,x,u^{2},v^{2},\bar{v}^{2},\tilde{v}^{2}). \nonumber
\end{eqnarray}
Then, we assume that the generalized local Lipschitz condition is
true almost surely (a.s.),
\begin{eqnarray}
&&\left\|\Delta{\cal
A}^{(c+l+o)}(s,x,u^{1},v^{1},\bar{v}^{1},\tilde{v}^{1},u^{2},v^{2},
\bar{v}^{2},\tilde{v}^{2})\right\|
\elabel{blipschitz}\\
&&\leq
K_{D,c}\left(\|u^{1}-u^{2}\|_{C^{k+c}(D,r)}+\|v^{1}-v^{2}\|_{C^{k+c}(D,q)}\right.
\nonumber\\
&&\;\;\;\;\;\;\;\;\;\;\;\;\;\left.+\|\bar{v}^{1}-\bar{v}^{2}\|_{C^{k+c}(D,qd)}
+\|\tilde{v}^{1}-\tilde{v}^{2}\|_{\nu,k+c}\right). \nonumber
\end{eqnarray}
Note that $K_{D,c}$ in \eq{blipschitz} with each $c\in\{0,1,2,...\}$
is a nonnegative constant. It depends on the domain $D$ and the
differential order $c$ and may be unbounded as $c\rightarrow\infty$
and $D\rightarrow R^{p}$. $l\in\{0,1,2\}$ denotes the $l$th order of
partial derivative of $\Delta{\cal
A}^{(c)}(s,x,u,v,\bar{v},\tilde{v})$ in time variable $t$.
$o\in\{0,1,2\}$ denotes the $o$th order of partial derivative of
$\Delta{\cal A}^{(c+l)}(s,x,u,v,\bar{v},\tilde{v})$ in terms of a
component of $u$, $v$, $\bar{v}$, or $\tilde{v}$. Furthermore, for
each ${\cal A}\in\{{\cal I},\bar{{\cal I}}\}$, we suppose that
\begin{eqnarray}
%&&\left\|\Delta\bar{{\cal
%J}}^{(c+l+o)}(s,x,u^{1},v^{1},u^{2},v^{2})\right\|\elabel{blipschitzo}\\
%&\leq&
%K_{D,c}\left(\|u^{1}-u^{2}\|_{C^{k+c}(D,r)}+\left\|v^{1}-v^{2}\right\|_{C^{k+c}(D,q)}\right),
%\nonumber\\
&&\sum_{i=1}^{h}\int_{{\cal Z}}\left\|\Delta{\cal
A}_{i}^{(c+l+o)}(s,x,u^{1},v^{1},\bar{v}^{1},\tilde{v}^{1},u^{2},v^{2},
\bar{v}^{2},\tilde{v}^{2},z_{i})\right\|^{2}\lambda_{i}\nu_{i}(dz_{i})
\elabel{nlipo}\\
&&\leq
K_{D,c}\left(\|u^{1}-u^{2}\|^{2}_{C^{k+c}(D,r)}+\|v^{1}-v^{2}\|^{2}_{C^{k+c}(D,q)}\right.
\nonumber\\
&&\;\;\;\;\;\;\;\;\;\;\;\;\;\left.+\|\bar{v}^{1}-\bar{v}^{2}\|^{2}_{C^{k+c}(D,qd)}
+\|\tilde{v}^{1}-\tilde{v}^{2}\|^{2}_{\nu,k+c}\right), \nonumber
%&&\;\;\;\;\;\;\;\;\;\;\;\;\;\;\left.+\|\bar{v}^{1}-\bar{v}^{2}\|_{C^{k+c}(D,qd)}
%+\|\tilde{v}^{1}(z)-\tilde{v}^{2}(z)\|_{C^{k+c}(D,mh)}\right).
%\nonumber
\end{eqnarray}
where ${\cal A}_{i}$ is the $i$th column of ${\cal A}$.

Second, for each ${\cal A}\in\{{\cal L},\bar{\cal L},{\cal
J},\bar{{\cal J}}\}$, every $c\in\{0,1,2,...,\}$, and any
$(u,v,\bar{v},\tilde{v})\in{\cal V}^{\infty}(D)$, we suppose that
the generalized local linear growth condition holds
\begin{eqnarray}
&&\left\|{\cal A}^{(c+l+o)}(s,x,u,v,\bar{v},\tilde{v})\right\|
\elabel{fblipschitzoI}\\
&&\leq
K_{D,c}\left(\delta_{0c}+\|u\|_{C^{k+c}(D,r)}+\|v\|_{C^{k+c}(D,q)}\right.
\nonumber\\
&&\;\;\;\;\;\;\;\;\;\;\;\;\;
\left.+\|\bar{v}\|_{C^{k+c}(D,qd)}+\|\tilde{v}\|_{\nu,k+c}\right),
\nonumber
\end{eqnarray}
where, $\delta_{0c}=1$ if $c=0$ and $\delta_{0c}=0$ if $c>0$.
Similarly, for each ${\cal A}\in\{{\cal I},\bar{{\cal I}}\}$, we
suppose that
\begin{eqnarray}
%&&\left\|\bar{{\cal J}}^{(c+l+o)}(s,x,u,v)\right\|\leq
%K_{D,c}\left(\delta_{0c}+\|u\|_{C^{k+c}(D,r)}+\|v\|_{C^{k+c}(D,q)}\right),
%\elabel{blipschitzI}\\
&&\sum_{i=1}^{h}\int_{{\cal Z}}\left\|{\cal
A}_{i}^{(c+l+o)}(s,x,u,v,\bar{v},\tilde{v},z_{i})\right\|^{2}\lambda_{i}\nu(dz_{i})
\elabel{nlipoI}\\
&&\leq
K_{D,c}\left(\delta_{0c}+\|u\|^{2}_{C^{k+c}(D,r)}+\|v\|^{2}_{C^{k+c}(D,q)}\right.%\right),
\nonumber\\
&&\;\;\;\;\;\;\;\;\;\;\;\;\;
\left.+\|\bar{v}\|^{2}_{C^{k+c}(D,qd)}+\|\tilde{v}\|^{2}_{\nu,k+c}\right).
\nonumber
\end{eqnarray}

Then, we can state our main theorem of this subsection as follows.
\begin{theorem}\label{bsdeyI}
Suppose that $(G,H)\in L^{2}_{{\cal G}}(\Omega,
C^{\infty}(D;R^{r}))\times L^{2}_{{\cal F}_{T}}(\Omega,
C^{\infty}(D;R^{q}))$ and conditions in \eq{blipschitz}-\eq{nlipoI}
are true. Furthermore, assume that each ${\cal A}\in\{{\cal
L},\bar{{\cal L}},{\cal J},\bar{{\cal J}}, {\cal I},\bar{{\cal
I}}\}$ is $\{{\cal F}_{t}\}$-adapted for every fixed $x\in D$,
$z\in{\cal Z}^{h}$, and any given $(u,v,\bar{v},\tilde{v})\in{\cal
V}^{\infty}(D)$ with
\begin{eqnarray}
&&{\cal L}(\cdot,x,0)\in L^{2}_{{\cal
F}}\left([0,T],C^{\infty}(D,R^{r})\right),
\elabel{flipic}\\
&&{\cal J}(\cdot,x,0)\in L^{2}_{{\cal
F}}\left([0,T],C^{\infty}(D,R^{r\times d})\right),
\elabel{flipicADI}\\
&&{\cal I}(\cdot,x,0,\cdot)\in L^{2}_{{\cal F}}\left([0,T]\times
R_{+}^{h},C^{\infty}(D,R^{r\times h})\right),
\elabel{flipicADII}\\
&&\bar{{\cal L}}(\cdot,x,0)\in L^{2}_{{\cal
F}}\left([0,T],C^{\infty}(D,R^{q})\right),
\elabel{blipic}\\
&&\bar{{\cal J}}(\cdot,x,0)\in L^{2}_{{\cal
F}}\left([0,T],C^{\infty}(D,R^{q\times d})\right),
\elabel{blipicADI}\\
&&\bar{{\cal I}}(\cdot,x,0,\cdot)\in L^{2}_{{\cal
F}}\left([0,T]\times R_{+}^{h},C^{\infty}(D,R^{q\times h})\right).
\elabel{blipicADII}
\end{eqnarray}
Then, there exists a unique adapted 4-tuple strong solution to the
system in \eq{fbspdef}, i.e.,
\begin{eqnarray}
&&(U,V,\bar{V},\tilde{V})\in {\cal Q}^{2}_{{\cal F}}([0,T]\times D),
\elabel{buniso}
\end{eqnarray}
and $(U,V)(\cdot,x)$ is c\`adl\`ag for each $x\in D$ almost surely
(a.s.).
\end{theorem}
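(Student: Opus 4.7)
The plan is to establish the well-posedness by a Picard iteration in the Banach space ${\cal Q}^{2}_{{\cal F}}([0,T]\times D)$, combining the martingale representation technique standard for B-SPDEs with $L^{2}$-energy estimates for F-SPDEs, and then lifting from $C^{c}$ regularity to $C^{\infty}$ via the weighted series \eq{inftynorm}. Concretely, I would set $(U^{(0)},V^{(0)},\bar{V}^{(0)},\tilde{V}^{(0)})=(G,H,0,0)$ and define the $(n+1)$st iterate by (i) solving the F-SPDE in \eq{fbspdef} with the coefficients evaluated at the $n$th iterate (a pure-forward jump-diffusion SPDE admitting a unique adapted solution $U^{(n+1)}$ via standard fixed-point arguments) and (ii) applying the martingale representation theorem for the ${\cal F}_{t}$-martingale $M^{(n+1)}(t)\equiv E[H+\int_{0}^{T}\bar{\cal L}(s^{-},\cdot,U^{(n)},V^{(n)},\bar{V}^{(n)},\tilde{V}^{(n)})ds\mid{\cal F}_{t}]$ to obtain the triple $(V^{(n+1)},\bar{V}^{(n+1)},\tilde{V}^{(n+1)})$ so that the backward integral equation holds pathwise.

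The a priori estimates are obtained order by order. For fixed $c\in\{0,1,2,\dots\}$, formal differentiation of \eq{fbspdef} in $x$ produces an SPDE system satisfied by $(U^{(c)},V^{(c)},\bar{V}^{(c)},\tilde{V}^{(c)})$ whose drift, diffusion and jump coefficients are exactly the $c$th spatial derivatives of ${\cal L},{\cal J},{\cal I}$ and their barred counterparts, bounded according to \eq{fblipschitzoI}--\eq{nlipoI}. Applying It\^o's formula to $\|U^{(n+1)}-U^{(n)}\|^{2}_{C^{c}(D,r)}$ and $\|V^{(n+1)}-V^{(n)}\|^{2}_{C^{c}(D,q)}$, using BDG and the jump isometry for $\tilde{N}(\lambda ds,dz)$ to control the stochastic integrals, and invoking \eq{blipschitz}--\eq{nlipo} at level $c$, I would obtain a Gronwall-type estimate of the form
\begin{eqnarray}
&&\Phi^{(n+1)}_{c}(t) \;\leq\; K_{D,c}\int_{0}^{T}\Phi^{(n)}_{c}(s)\,ds + \text{(boundary terms from $H$, $G$)},\nonumber
\end{eqnarray}
where $\Phi^{(n)}_{c}(t)$ bundles the four component norms at differential order $c$. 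Iterating this estimate $n$ times yields the classical factor $(K_{D,c}T)^{n}/n!$ and hence a Cauchy sequence in the $C^{c}$-level component space, giving a limit $(U,V,\bar{V},\tilde{V})_{c}$; the forward-backward coupling is handled by choosing $T$ first and applying exponential time-weighting $e^{-\beta t}$ with $\beta=\beta(K_{D,c})$ large to absorb the cross-terms, then pasting solutions on $[0,T_{1}],[T_{1},T_{2}],\dots$ if necessary.

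To upgrade from $C^{c}$ for every $c$ to the $C^{\infty}(D)$-valued setting required by \eq{buniso}, I would sum the order-$c$ estimates against the weights $\xi(c)=1/(((c^{10})!)(\eta(c)!)e^{c})$ of \eq{inftynorm}. Because $\xi(c)$ decays super-exponentially while the Lipschitz/growth constants $K_{D,c}$ enter the $n$th Picard bound only through the pre-factor $(K_{D,c}T)^{n}/n!$, the series $\sum_{c}\xi(c)\Phi^{(n+1)}_{c}(t)$ converges for each $n$ and the sequence $(U^{(n)},V^{(n)},\bar{V}^{(n)},\tilde{V}^{(n)})$ becomes Cauchy in the full $C^{\infty}$-weighted norm. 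This furnishes the adapted strong limit in ${\cal Q}^{2}_{{\cal F}}([0,T]\times D)$; uniqueness follows by applying the same It\^o/Gronwall machinery to the difference of two purported solutions. The c\`adl\`ag property of $(U,V)(\cdot,x)$ pointwise in $x$ is inherited from the c\`adl\`ag property of the driving noises $W$ and $\tilde{N}$ together with the $L^{2}$-continuity of the integrals.

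The principal obstacle I anticipate is not the jump term (which is handled routinely by the compensated martingale isometry under \eq{nlipo} and \eq{nlipoI}) nor the infinite-order lift (which is engineered away by the fast decay of $\xi(c)$), but rather the \emph{forward-backward coupling at each differential order simultaneously}: the $c$th-order derivative of $U$ depends on the $(c+l+o)$th-order derivative of $V$ (and vice versa) through \eq{blipschitz}, so the Picard contraction must be closed on the joint 4-tuple and requires careful bookkeeping to ensure that the growth in differentiation order does not cascade through the coupling. Resolving this cascade via the weighted-norm trick, together with a small-time/exponential-weighting argument to break the forward-backward interdependence, is the technical heart of the proof and is where the extension of Dai~\cite{dai:newcla} from the B-SPDE case to the present coupled FB-SPDE system must do its real work.
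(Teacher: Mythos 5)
Your overall architecture coincides with the paper's: a Picard-type map built from a forward solve plus the martingale representation theorem for the backward triple (the paper's Lemma~\ref{martindecom}), It\^o/BDG estimates with exponential time weights, and a lift to $C^{\infty}$ by summing against the weights $\xi(c)$ (the paper's Lemma~\ref{lemmathree}). However, there is a genuine gap at the central estimate. The inequality you display, $\Phi^{(n+1)}_{c}(t)\leq K_{D,c}\int_{0}^{T}\Phi^{(n)}_{c}(s)\,ds+\cdots$, does not follow from \eq{blipschitz}--\eq{nlipo}: those conditions bound the $c$th-order coefficient differences by the $(k+c)$th-order norms of the solution differences, so the level-$c$ estimate necessarily involves $\Phi^{(n)}_{k+c}$, not $\Phi^{(n)}_{c}$, and the iteration does not close at any fixed differential order. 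Consequently the claimed $(K_{D,c}T)^{n}/n!$ factor and the ``Cauchy sequence in the $C^{c}$-level component space'' are not available; after $n$ iterations level $c$ reaches back to level $c+nk$, and since $K_{D,c}$ may grow arbitrarily fast in $c$, the product of constants is not controlled by $T^{n}/n!$. Your closing paragraph correctly identifies this cascade, but the resolution is asserted rather than supplied. The paper's mechanism is different in substance: the contraction is closed only in the full weighted norm \eq{comnorm}, using an order-dependent sequence of exponential weights $\gamma_{c}$ (so that $\hat{\gamma}_{c}=1/(2\gamma_{c})$ is chosen small against $K_{d,c}$ and the ratio of $\xi$-weights, see the choice preceding \eq{infinityine}), yielding at each order the cross-level bound \eq{clastine} with the right-hand side at order $k+c$, and then summing so that the whole 4-tuple sequence is Cauchy in ${\cal M}^{D}_{\gamma}[0,T]$. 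A single uniform $e^{-\beta t}$ weight, or a small-time/pasting argument, does not substitute for this, because the loss of derivatives recurs at every order and the constants are order-dependent; moreover pasting in time is awkward for a coupled FB system since the terminal condition couples the whole horizon, which is why the paper works on all of $[0,T]$ at once.

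A second, smaller omission: you treat the order-by-order system obtained by ``formal differentiation'' as automatically valid for the iterates. In the paper this is exactly the content of Lemma~\ref{differentiableV}, which is proved by difference quotients, an essential-supremum argument, the mean-value theorem together with \eq{blipschitz}, and dominated convergence, to show that the Picard map genuinely preserves infinite differentiability in $x$ and that the differentiated equations \eq{pdsigmanV} hold a.s. Without this step the weighted-norm contraction has nothing to act on, so your proposal should either prove an analogue of this lemma or cite it explicitly; the c\`adl\`ag claim and uniqueness then follow as you indicate.
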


The proof of Theorem~\ref{bsdeyI} is provided in
Section~\ref{uniextproof}.

\subsection{The Complex-Valued System with Open Position Parametric Domain}

In this subsection, we generalize the study in
Subsection~\ref{boundedd} to the case corresponding to an open (or
partially open) position parametric domain $D$ (e.g., $R^{p}$ or
$R_{+}^{p}$). More exactly, we assume that there exists a sequence
of nondecreasing closed and connected sets
$\{D_{n},n\in\{0,1,...\}\}$ such that
\begin{eqnarray}
&&D=\bigcup_{n=0}^{\infty}D_{n}. \elabel{unionset}
\end{eqnarray}
Furthermore, let $C^{\infty}(D,\mathbb{C}^{l})$ with $l\in\{r,q\}$
be the Banach space endowed with the norm for each $f$ in the space
\begin{eqnarray}
&&\|f\|^{2}_{C^{\infty}(D,l)}\equiv\sum_{n=0}^{\infty}\xi(n+1)
\|f\|^{2}_{C^{\infty}(D_{n},l)}, \elabel{irbspacen}
\end{eqnarray}
where $\mathbb{C}^{l}$ is the $l$-dimensional complex Euclidean
space and the norm $\|f\|^{2}_{C^{\infty}(D_{n},l)}$ in
\eq{irbspacen} is interpreted in the corresponding complex-valued
sense. In addition, define $\bar{Q}^{2}_{{\cal F}}([0,\tau]\times
D)$ to be the corresponding space in \eq{bunisoI} if the terminal
time $T$ is replaced by the stopping time $\tau$ in \eq{fbspdef} and
the norm in \eq{inftynorm} is substituted by the one in
\eq{irbspacen}. Finally, we use the same way to interpret the spaces
$L^{2}_{{\cal G}}(\Omega, C^{\infty}(D;R^{r}))$ and $L^{2}_{{\cal
F}_{\tau}}(\Omega, C^{\infty}(D;R^{q}))$. Then, we have the
following theorem.
\begin{theorem}\label{infdn}
Suppose that $(G,H)\in L^{2}_{{\cal G}}(\Omega,
C^{\infty}(D;R^{r}))\times L^{2}_{{\cal F}_{\tau}}(\Omega,
C^{\infty}(D;R^{q}))$ and the system in \eq{fbspdef} satisfies the
conditions in \eq{blipschitz}-\eq{nlipoI} over $D_{n}$ for each
$n\in\{0,1,...\}$ with associated (local) linear growth and Lipshitz
constant $K_{D_{n},c}$. Furthermore, assume that each ${\cal
A}\in\{{\cal L},\bar{{\cal L}},{\cal J},\bar{{\cal J}}, {\cal
I},\bar{{\cal I}}\}$ is $\{{\cal F}_{t}\}$-adapted for every fixed
$x\in D$, $z\in{\cal Z}^{h}$, and any given
$(u,v,\bar{v},\tilde{v})\in{\cal V}^{\infty}(D)$ with conditions in
\eq{flipic}-\eq{blipicADII} being true. Then, the system in
\eq{fbspdef} has a unique adapted 4-tuple strong solution
\begin{eqnarray}
&&(U,V,\bar{V},\tilde{V})\in\bar{{\cal Q}}^{2}_{{\cal
F}}([0,\tau]\times D), \elabel{obuniso}
\end{eqnarray}
and $(U,V)(\cdot,x)$ is c\`adl\`ag for each $x\in D$ a.s.
\end{theorem}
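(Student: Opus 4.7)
The plan is to localize the system to each closed set $D_n$, invoke \Theorem{bsdeyI} on each piece, and glue the resulting solutions using uniqueness. First I would reduce to the real-valued case by writing every complex-valued quantity as a pair of real and imaginary parts; the doubled-dimensional system satisfies exactly the same local Lipschitz and linear growth conditions \eq{blipschitz}--\eq{nlipoI} on each $D_n$ with the constants $K_{D_n,c}$. A standard localization (stopping coefficients and the driving noises at $\tau$) lets me apply \Theorem{bsdeyI} on the stochastic interval $[0,\tau]$, so that on each $D_n$ I obtain a unique adapted 4-tuple strong solution $(U^n,V^n,\bar V^n,\tilde V^n)\in{\cal Q}^2_{{\cal F}}([0,\tau]\times D_n)$ with $(U^n,V^n)(\cdot,x)$ c\`adl\`ag for every $x\in D_n$.

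Next I would establish consistency of the family. For $m>n$, the restriction of $(U^m,V^m,\bar V^m,\tilde V^m)$ to $D_n$ is itself an adapted strong solution on $D_n$ with the restricted data $G|_{D_n},H|_{D_n}$, and by the uniqueness clause of \Theorem{bsdeyI} it coincides with $(U^n,V^n,\bar V^n,\tilde V^n)$ up to indistinguishability. This lets me unambiguously define a single process $(U,V,\bar V,\tilde V)$ on $D$ by setting $(U,V,\bar V,\tilde V)(t,x):=(U^n,V^n,\bar V^n,\tilde V^n)(t,x)$ whenever $x\in D_n$; the c\`adl\`ag property at each $x\in D$ is then inherited from the smallest $D_n$ containing $x$. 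Uniqueness of the global solution follows in the same way: any other adapted strong solution restricted to $D_n$ must, by \Theorem{bsdeyI}, coincide with $(U^n,V^n,\bar V^n,\tilde V^n)$ and hence with the constructed $(U,V,\bar V,\tilde V)$.

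The main analytic obstacle is verifying the integrability required by \eq{obuniso}. Using the definition of the global norm in \eq{irbspacen} together with Tonelli's theorem,
\[
E\left[\int_0^\tau \|Z(t)\|^2_{C^\infty(D,l)}\,dt\right]
= \sum_{n=0}^\infty \xi(n+1)\,E\left[\int_0^\tau \|Z^n(t)\|^2_{C^\infty(D_n,l)}\,dt\right],
\]
where $Z$ runs through each of the four components of the solution and each summand on the right is finite by \Theorem{bsdeyI}. The delicate point is that the individual expectations may grow with $n$ together with $K_{D_n,c}$ and with the $C^\infty(D_n,l)$-norms of $G$ and $H$. Two ingredients should combine to tame this: on the one hand, the hypothesis $G,H\in L^2_{{\cal G}}(\Omega,C^\infty(D;\cdot))$ already encodes fast decay in $n$ through the same weight $\xi(n+1)$; on the other hand, Gronwall-type a priori estimates of the kind developed in the proof of \Theorem{bsdeyI}, applied separately on each $D_n$, bound the $n$-th expectation by a fixed multiple of the $D_n$-norms of the data and of the inhomogeneous terms in \eq{flipic}--\eq{blipicADII}. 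The super-factorial decay built into $\xi(n+1)$ is designed precisely to dominate any polynomial or exponential blow-up of $K_{D_n,c}$ as $D_n\nearrow D$, and I expect this two-level bookkeeping — the outer sum over $n$ with weight $\xi(n+1)$ and, within each $D_n$, the inner sum over derivative order $c$ with weight $\xi(c)$ — to be the most delicate part of the argument.
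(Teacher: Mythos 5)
Your gluing construction (solve on each $D_{n}$ by Theorem~\ref{bsdeyI}, identify restrictions by uniqueness, patch to a global pointwise solution, inherit the c\`adl\`ag property) is reasonable, and your treatment of the complex-valued case and of the stopping time $\tau$ is at the same level of detail as the paper's own remarks. The genuine gap is the final step, membership in $\bar{{\cal Q}}^{2}_{{\cal F}}([0,\tau]\times D)$, i.e.\ convergence of the weighted sum coming from \eq{irbspacen}. Two of your claims there do not hold as stated. First, the per-domain a priori bound delivered by the argument behind Theorem~\ref{bsdeyI} is not ``a fixed multiple of the $D_{n}$-norms of the data'': the constant produced by the Gronwall/contraction machinery depends on $K_{D_{n},c}$, because the exponential time-weights $\gamma_{c}$ must be chosen large relative to $K_{D_{n},c}$ (cf.\ \eq{clastine}), and translating the weighted estimate back into the unweighted norm \eq{adaptednormI} in terms of the data costs factors of order $e^{2\gamma T}$ that grow with $n$ in an uncontrolled way. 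Second, the hypotheses impose no growth restriction whatsoever on $n\mapsto K_{D_{n},c}$ (the paper explicitly allows $K_{D,c}$ to be unbounded as $D\rightarrow R^{p}$), so the assertion that the fixed weight $\xi(n+1)$ ``dominates any polynomial or exponential blow-up of $K_{D_{n},c}$'' cannot be justified: for any prescribed summable weight the Lipschitz constants may grow faster than its reciprocal. Consequently, summing your independent per-$D_{n}$ estimates does not yield \eq{obuniso}. (A secondary point needing care: the uniqueness class on $D_{n}$ uses weights $\xi$ built from $\eta(c)$, which depends on the domain, so even the claim that the $D_{m}$-solution restricted to $D_{n}$ lies in ${\cal Q}^{2}_{{\cal F}}([0,T]\times D_{n})$ is not term-by-term immediate.)

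This is exactly why the paper does not glue local solutions. Its proof reruns the single global Picard/contraction argument of Lemma~\ref{lemmathree}, but in a modified Banach space whose norm \eq{fspaceex} is ``diagonal'': the $c$-th term measures order-$c$ derivatives on the subdomain $D_{c}$, weighted by $\xi(c)$ and by a time factor $e^{2\gamma_{D_{c}}t}$ in which $\gamma_{D_{c}}$ is allowed to depend on both $D_{c}$ and $c$. Because each $\gamma_{D_{c}}$ is free, it is chosen level by level, after $K_{D_{c},\cdot}$ is known, so that every term of the sum contracts with one uniform factor, giving \eq{uinfinityine}; existence, uniqueness and the required integrability then come out of the fixed point simultaneously, and no summation of estimates with $n$-degrading constants is ever needed. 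To salvage your route you would either have to add a hypothesis linking the growth of $K_{D_{n},c}$ to $\xi$ and the data, or requantify the $D_{n}$-wise estimates sharply enough for the weighted sum to converge --- which in effect reproduces the paper's weighted-norm argument.
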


The proof of Theorem~\ref{infdn} is provided in
Section~\ref{uniextproof}.

\subsection{Illustrative Examples}

In this subsection, we present two illustrative examples related to
heat diffusions and quantum Hall/anomalous Hall effects.
Mathematically, the heat diffusions are modeled as a stochastic
Dirichlet-Poisson problem and the Hall effects are presented by a
stochastic Schr$\ddot{o}$dinger equation. Examples related to
queueing systems and SDGs will be presented in
Section~\ref{queuesdg} after studying the system of coupled FB-SDEs
in \eq{bsdehjb}.

\vskip 0.4cm
\begin{itemize}
\item {\bf Heat Diffusions: Stochastic Dirichlet-Poisson Problem}
\end{itemize}

\vskip 0.4cm For this example, we consider a special B-SPDE (a
specific backward part of the system in \eq{fbspdef}). More
precisely, the associated partial differential operators are given
by
\begin{eqnarray}
\bar{{\cal
L}}(t,x,U,V,\bar{V},\tilde{V})&\equiv&-g(t,x)+\frac{1}{2}\sum_{j=1}^{p}\frac{\partial^{2}
V(t,x)}{\partial x^{2}_{j}},
\nonumber\\
\bar{{\cal J}}(t,x,U,V,\bar{V},\tilde{V})&\equiv&0,
\nonumber\\
\bar{{\cal I}}(t,x,U,V,\bar{V},\tilde{V},z)&\equiv&0, \nonumber
\end{eqnarray}
where $V$ is single-dimensional (i.e., $q=1$) and $g(t,x)$ is some
integrable function. Then, we can obtain the corresponding B-SPDE
with jumps as follows,
\begin{eqnarray}
V(t,x)&=&H(x)
+\int_{t}^{\tau}\left(-g(t,x)+\frac{1}{2}\sum_{j=1}^{p}\frac{\partial^{2}
V(t,x)}{\partial x^{2}_{j}}\right)ds
\elabel{dirichletpoisson}\\
&&\;\;\;\;\;\;\;\;\;-\int_{t}^{\tau}\bar{V}(s,x)dW(s)
\nonumber\\
&&\;\;\;\;\;\;\;\;\;-\int_{t}^{\tau}\int_{z>0}\tilde{V}(s^{-},x,z)
\tilde{N}(\lambda ds,dz).
\nonumber
\end{eqnarray}
When the terminal value $H(x)$ is a boundary based condition over
$D$, we will call the related resolution problem {\it a stochastic
Dirichlet-Poisson problem with jumps}, which is a randomized general
form of the classical Dirichlet-Poisson problem (see, e.g., the
definition of a classic case in
$\emptyset$ksendal~\cite{oks:stodif}). An explanation about how to
use this problem to estimate the inner or surface temperature of
certain material or an object (e.g., Sun) is displayed in
Figure~\ref{dirichlet}.
\begin{figure}[tbh]%\label{dirichlet}
\begin{center}
%\centerline{
\epsfxsize=4.5in\epsfbox{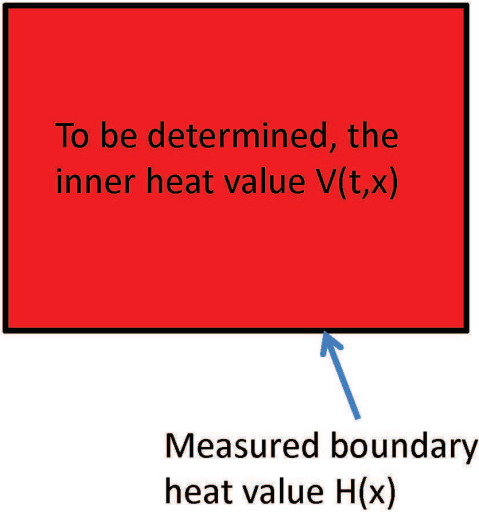}%}
\caption{\small A heat diffusion system by a solution to the
Dirichlet-Poisson problem}
\label{dirichlet}
\end{center}
\end{figure}
Physically, the randomized heat equation in \eq{dirichletpoisson} is
derived from a particle system just like its classic counterpart,
which can be modeled by a diffusion process as follows,
\begin{eqnarray}
&&dX_{t}=b(X_{t})dt+\sigma(X_{t})dW(t). \elabel{adif}
\end{eqnarray}
Now, we use $\tau_{D}^{x}$ to denote the first exit time (a stopping
time) of the process $X_{t}$ from the time-space domain $[0,T]\times
D$, i.e.,
\begin{eqnarray}
&&\tau_{D}^{x}=inf\{t>0,(t,X_{t})\notin [0,T]\times D\},
\elabel{adifstop}
\end{eqnarray}
where the upper index $x$ means that $X_{t}$ starts from $x\in D$.
Then, we can impose a terminal-boundary condition with
$\tau=\tau_{D}^{x}$ as follows,
\begin{eqnarray}
&&\lim_{t\rightarrow\tau_{D}^{x}(\omega)}V(t,X_{t})
=H(\tau_{D}^{x}(\omega)) =H_{D}(x,\omega)\;\;\;\;\;a.s.\;\;Q^{x}
\elabel{imbcon}
\end{eqnarray}
for all $(t,x)\in [0,T]\times D$. In the case that $H_{D}(x,\omega)$
is a random variable independent of $x$, the required smooth
condition for the well-posedness of the B-SPDE in
\eq{dirichletpoisson} is satisfied. In general, $H_{D}(x,\omega)$
can be approximated by sufficiently smooth function in $x$ as
required.

\vskip 0.4cm
\begin{itemize}
\item {\bf Hall Effects: Stochastic Schr$\ddot{o}$dinger Equation}
\end{itemize}

\vskip 0.4cm In quantum physics and statistical mechanics, some
phenomena such as Hall/anomalous Hall effects (see, e.g.,
Hall~\cite{hal:newact}, Karplus and Luttinger~\cite{karlut:haleff},
and the summarized description at Wikipedia website) are major
concerns. By the definition of Hall effect, the movements of quantum
particles (e.g., electrons) within a semiconduct/superconduct are
along regular paths (see Figure~\ref{hallano} for such an example)
if the Lorentz force generated by an external magnetic field with a
perpendicular component is imposed.
\begin{figure}[tbh]%\label{hallano}
\begin{center}
%\centerline{
\epsfxsize=3.5in\epsfbox{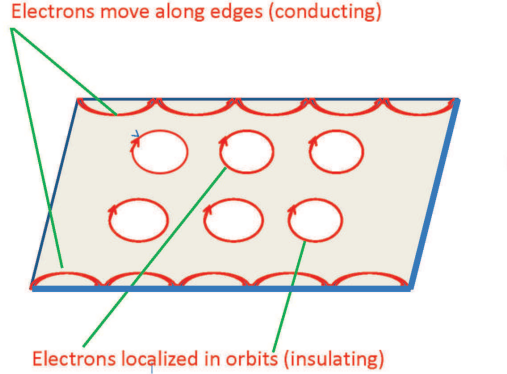}%}
\caption{\small Edge states carry the current}
\label{hallano}
\end{center}
\end{figure}
When this phenomenon happens, the collisions of quantum particles
will be significantly reduced and the performance of the
semiconduct/superconduct will be largely improved. However, in a
real application, imposing an external magnetic field is frequently
expensive. Thus, people try to develop some magnetic material based
semiconduct/superconduct in order that the Hall effect happens
naturally (see, e.g., Karplus and Luttinger~\cite{karlut:haleff},
Chang {\em et al.}~\cite{chazha:expobs}). This phenomenon is called
anomalous Hall effect.

Besides observing the Hall/anomalous Hall effects by physical
experiments (see, e.g., Hall~\cite{hal:newact} and Chang {\em et
al.}~\cite{chazha:expobs}), one can also analytically study and
simulate these effects through a Schr$\ddot{o}$dinger equation (see,
e.g., Thouless~\cite{tho:quahal}, Chai~\cite{cha:denfun,cha:theass},
and the summarized descriptions about density functional theory and
time-dependent density functional theory at Wikipedia website). The
Schr$\ddot{o}$dinger equation used in most existing studies is a
form of the Fokker-Planck's formula (see, e.g.,
$\emptyset$ksendal~\cite{oks:stodif}). Here, by taking a form of
${\cal L}$ in the forward part of the system in \eq{fbspdef}, we can
unify these Schr$\ddot{o}$dinger equations (see, e.g., Bouard and
Debussche~\cite{boudeb:stonon,boudeb:stononI},
Thouless~\cite{tho:quahal}, Chai~\cite{cha:denfun,cha:theass}) into
the {\em generalized stochastic nonlinear Schr$\ddot{o}$dinger
equation} with absorbing boundaries for each $i\in\{1,...,2p\}$),
%Besides its usage
%in characterizing the dynamics and distribution of the quantum
%particle movements, it also has applications in many areas such as
%the usage in modeling the propagation of light in nonlinear optical
%fibers and planar waveguides.
\begin{eqnarray}
&&idV(t,x)={\cal L}(t^{-},x,V)dt+{\cal
J}(t^{-},x,V)dW(t)+\int_{z>0}{\cal I}(t^{-},x,V,z)\tilde{N}(\lambda
dt,dz), \elabel{hallbspdefhall}
\end{eqnarray}
where $V$ is single-dimensional (i.e., $q=1$), $i$ is the imaginary
number, and ${\cal L}$ is a form of the operator,
\begin{eqnarray}
{\cal
L}(t,x,V,\cdot)&=&\sum_{j=1}^{p}a_{jj}(x)\frac{\partial^{2}V(t,x)}{\partial
x_{j}^{2}}+\sum_{j=1}^{p}b_{j}(x)\frac{\partial V(t,x)}{\partial
x_{j}}+c(x,V)V(t,x). \elabel{schrodingerop}
\end{eqnarray}
Note that $c(x,V)$ in \eq{schrodingerop} is the potential, which may
depend on external temperature and/or external magnetic field. For
example, the recent discovery about the Anomalous Hall Effect (see,
e.g., Chang {\em et al.}~\cite{chazha:expobs}) is based on a lower
temperature and without imposing external magnetic field.
Furthermore, the related Schr$\ddot{o}$dinger equation based studies
can be found in Chai~\cite{cha:denfun,cha:theass}, etc.

Now, if the densities appeared in the Hall/Anomalous Hall Effects
are the target stationary distributions (i.e., the terminal values
$H(T,x)$ in \eq{fbspdef} are given), we can take $\bar{{\cal L}}$ in
the system of \eq{fbspdef} to be a form of the operator in
\eq{schrodingerop}. Then, we can find the initial and transient
distributions of quantum particles by the backward part of the
system in \eq{fbspdef}. From physical viewpoint, this study provides
insights about how to characterize and manufacture the magnetic
material based semiconductor/superconduct.

\section{The Coupled FB-SDEs with L\'evy Jumps and Skew
Reflections}\label{fbsdes}

\subsection{The Coupled FB-SDEs and Its Well-Posedness}

In this section, we suppose that the process $X$ governed by the
forward SDE in \eq{bsdehjb} lives in a state space $D$ (e.g., a
$p$-dimensional positive orthant or a $p$-dimensional rectangle).
Furthermore, let $D_{i}=\{x\in R^{p},x\cdot n_{i}=b_{i}\}$ for
$i\in\{1,...,b\}$ be the $i$th boundary face of $D$, where $b_{i}=0$
for $i\in\{1,...,p\}$, $b_{i}$ is some positive constant for
$i\in\{p+1,...,b\}$, and $n_{i}$ is the inward unit normal vector on
the boundary face $D_{i}$. For convenience, we define
$N=(n_{1},...,n_{b})$. In addition, let $R$ in \eq{bsdehjb} be a
$p\times b$ matrix with $b\in\{p,2p\}$, whose $i$th column denoted
by $p$-dimensional vector $v_{i}$ is the reflection direction of $X$
on $D_{i}$. The process $Y$ in \eq{bsdehjb} is a nondecreasing
predictable process with $Y(0)=0$ and boundary regulating property
as explained in \eq{bsdehjb}.
%satisfies
%\begin{eqnarray}
%&&\int_{0}^{T}I_{D_{i}}(X_{i}(t))dY_{i}(t)=Y_{i}(t)
%\elabel{yvregulator}
%\end{eqnarray}
%for each $i\in\{1,...,b\}$.
In queueing system, this process is called boundary idle time or
blocking process.

Similarly, we assume that $V$ takes value in a region $\bar{D}$ with
boundary face $\bar{D}_{i}=\{v\in
R^{q},v\cdot\bar{n}_{i}=\bar{b}_{i}\}$ for $i\in\{1,...,\bar{b}\}$,
where $\bar{n}_{i}$ is the inward unit normal vector on the boundary
face $\bar{D}_{i}$. For convenience, we define
$\bar{N}=(\bar{n}_{1},...,\bar{n}_{\bar{b}})$. In finance, the given
constant $\bar{b}_{i}$ is called early exercise reward. Furthermore,
$S$ in \eq{bsdehjb} is supposed to be a $q\times \bar{b}$ matrix for
a known $\bar{b}\in\{q,2q\}$. In addition, $F(\cdot)$ in
\eq{bsdehjb} is a nondecreasing predictable process with $F(0)=0$
and boundary regulating property as explained in \eq{bsdehjb}.
%and satisfies
%\begin{eqnarray}
%&&\int_{0}^{T}I_{\bar{D}_{i}}(V_{i}(t))dF_{i}(t)=F_{i}(t)
%\elabel{vregulator}
%\end{eqnarray}
%for each $i\in\{1,...,\bar{b}\}$.
%In finance, this process is called early exercise reward process.

To guarantee the existence and uniqueness of an adapted 6-tuple weak
solution to the coupled FB-SDEs in \eq{bsdehjb}, we need to
introduce the completely-${\cal S}$ condition on the reflection
matrix $R$ (and similarly on $S$).
\begin{definite}
A $p\times p$ square matrix $R$ is called completely-${\cal S}$ if
and only if there is $x>0$ such that $\tilde{R}x>0$ for each
principal sub-matrix $\tilde{R}$ of $R$, where the vector
inequalities are to be interpreted componentwise. Furthermore, a
$p\times b$ matrix $R$ is called completely-${\cal S}$ if and only
if each $p\times p$ square sub-matrix of $N'R$ is completely-{\cal
S}.
\end{definite}

Note that the completely-${\cal S}$ condition on the reflection
matrices guarantees that the coupled FB-SDEs are of inward
reflection on each boundary and corner of the orthant or the
rectangle (see, e.g., Figure~\ref{reflection} and
Dai~\cite{dai:broapp}). Furthermore, the reflection appeared here is
called skew reflection that is a generalization of the conventional
mirror (or called symmetry) reflection.

Now, the coefficient functions given in \eq{bsdehjb} are assumed to
be $\{{\cal F}_{t}\}$-predictable and are detailed as follows,
\begin{eqnarray}
b(t,x,u)\equiv b(t,x,v,\bar{v},\tilde{v},u,\cdot)&:&[0,T]\times
R^{p}\times R^{q}\times R^{q\times d}\times R^{q\times h}\times
U\rightarrow
R^{p},\nonumber\\
\sigma(t,x,u)\equiv\sigma(t,x,v,\bar{v},\tilde{v},u,\cdot)&:&[0,T]\times
R^{p}\times R^{q}\times R^{q\times d}\times R^{q\times h}\times
U\rightarrow
R^{p\times d},\nonumber\\
\eta(t,x,u)\equiv\eta(t,x,v,\bar{v},\tilde{v},u,z,\cdot)&:&[0,T]\times
R^{p}\times R^{q}\times R^{q\times d}\times R^{q\times h}\times
U\times{\cal Z}^{h}\rightarrow
R^{p\times h},\nonumber\\
c(t,x,u)\equiv c(t,x,v,\bar{v},\tilde{v},u,\cdot)&:&[0,T]\times
R^{p}\times R^{q}\times R^{q\times d}\times R^{q\times h}\times
U\rightarrow R^{q}, \nonumber\\
\alpha(t,x,u)\equiv\sigma(t,x,v,\bar{v},\tilde{v},u,\cdot)&:&[0,T]\times
R^{p}\times R^{q}\times R^{q\times d}\times R^{q\times h}\times
U\rightarrow
R^{q\times d},\nonumber\\
\zeta(t,x,u)\equiv\gamma(t,x,v,\bar{v},\tilde{v},u,z,\cdot)&:&[0,T]\times
R^{p}\times R^{q}\times R^{q\times d}\times R^{q\times h}\times
U\times{\cal Z}^{h}\rightarrow R^{q\times h}.
\nonumber
\end{eqnarray}
For $f,f^{1},f^{2}\in\{b,\sigma,c,\alpha\}$, we suppose that
\begin{eqnarray}
\left\|f(u)\right\|&\leq&L(t,\omega)
\left(1+\left\|x\right\|+\|v\|+\|\bar{v}\|+\|\tilde{v}\|_{\nu}\right),
\elabel{fbconI}\\
\left\|f^{2}(u)-f^{1}(u)\right\|&\leq&L(t,\omega)
\left(\left\|x^{2}-x^{1}\right\|+\left\|v^{2}-v^{1}\right\| \right.
\elabel{fbconII}\\
&&\;\;\;\;\;\;\;\;\;\;\;\;\left.+\left\|\bar{v}^{2}-\bar{v}^{1}\right\|
+\left\|\tilde{v}^{2}-\tilde{v}^{1}\right\|_{\nu}\right). \nonumber
\end{eqnarray}
Furthermore, for each $f,f^{1},f^{2}\in\{\gamma,\zeta\}$ and
$z\in{\cal Z}^{h}$, we suppose that
\begin{eqnarray}
&&\sum_{i=1}^{h}\int_{{\cal Z}}\left\|f_{i}(u,z_{i})\right\|^{2}
\lambda_{i}\nu_{i}(dz_{i})
\elabel{afbconI}\\
&&\leq
L^{2}(t,\omega)\left(1+\left\|x\right\|^{2}+\|v\|^{2}+\|\bar{v}\|^{2}
+\|\tilde{v}\|^{2}_{\nu}\right), \nonumber
\end{eqnarray}
where $f_{i}$ is the $i$th column of $f$, and
\begin{eqnarray}
&&\sum_{i=1}^{h}\int_{{\cal
Z}}\left\|f^{2}_{i}(u,z_{i})-f^{1}_{i}(u,z_{i})\right\|^{2}
\lambda_{i}\nu_{i}(dz_{i})
\elabel{afbconII}\\
&&\leq
L^{2}(t,\omega)\left(\left\|x^{2}-x^{1}\right\|^{2}+\left\|v^{2}-v^{1}\right\|^{2}
+\left\|\bar{v}^{2}-\bar{v}^{1}\right\|^{2}
+\left\|\tilde{v}^{2}-\tilde{v}^{1}\right\|^{2}_{\nu}\right).
\nonumber
\end{eqnarray}
In addition, we assume that the terminal value $H(x)\equiv
H(x,\cdot)$ satisfies the condition,
\begin{eqnarray}
&&\left\|H(x)\right\|\leq L(t,\omega)(1+\|x\|). \elabel{termc}
\end{eqnarray}
Finally, $L$ in \eq{fbconI}-\eq{afbconII} and \eq{termc} is assumed
to be a known non-negative stochastic process that is $\{{\cal
F}_{t}\}$-adapted and mean-squarely integrable, i.e.,
\begin{eqnarray}
&&E\left[\int_{0}^{T}L^{2}(t)dt\right]<\infty. \elabel{fbconIII}
\end{eqnarray}
\begin{theorem}\label{fbexist}
Under conditions \eq{fbconI}-\eq{fbconIII}, the following two claims
are true:
\begin{enumerate}
\item If $S$ and $R$ satisfy the completely-${\cal S}$ condition,
there exists a unique adapted 6-tuple weak solution to the system in
\eq{bsdehjb} when at least one of the forward and backward SDEs has
reflection boundary;
\item Furthermore, if each $q\times q$ sub-principal matrix of
$\bar{N}'S$ and each $p\times p$ sub-principal matrix of $N'R$ are
invertible or if both of the SDEs have no reflection boundaries,
there is a unique adapted 6-tuple strong solution to the system in
\eq{bsdehjb}.
\end{enumerate}
\end{theorem}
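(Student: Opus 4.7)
The plan is to establish well-posedness by a two-level argument: at the inner level, solve the reflected forward and backward SDEs via the Skorohod problem under the completely-$\mathcal{S}$ condition; at the outer level, close the forward--backward coupling through a Picard-type iteration in an exponentially time-weighted $L^{2}$ space. Given candidate adapted processes $(V,\bar V,\tilde V,F)$, I would first form the unconstrained driving process $Z(t)=x+\int_{0}^{t} b\,ds+\int_{0}^{t}\sigma\,dW+\int_{0}^{t}\int_{\mathcal{Z}^{h}}\eta\,\tilde N(\lambda ds,dz)$ and invoke the Skorohod problem $(Z,R,D)$ to obtain $(X,Y)$ with $X=Z+RY$, $X\in\bar D$, and the support property displayed in \eq{bsdehjb}. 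The oscillation inequality in the sense of Dai~\cite{dai:broapp} and Dai and Dai~\cite{daidai:heatra}, valid under the completely-$\mathcal{S}$ hypothesis, supplies the crucial a priori bound of the form $\sup_{s\le t}\|X(s)\|+\mathrm{Var}_{[0,t]}(Y)\le C_{R}\sup_{s\le t}\|Z(s)\|$. Symmetrically, given $(X,Y)$, a martingale-representation argument in the Brownian--L\'evy filtration, combined with the Skorohod problem $(\,\cdot\,,S,\bar D)$ and the terminal datum $V(T)=H(X(T))$, produces $(V,\bar V,\tilde V,F)$.

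Composing these two solvers defines an operator $\Phi$ on the product of $\{\mathcal{F}_{t}\}$-adapted $L^{2}$ spaces. Using the Burkholder--Davis--Gundy inequality, the randomized linear growth and Lipschitz bounds \eq{fbconI}--\eq{afbconII}, the integrability \eq{fbconIII}, and the oscillation inequality, I would show that $\Phi$ is a contraction in an exponentially weighted norm $\|\cdot\|_{\beta}$ for $\beta$ sufficiently large; this weight absorbs the random Lipschitz coefficient $L(t,\omega)$ and avoids any small-time restriction. The fixed point gives a 6-tuple solution, and c\`adl\`ag regularity of $(X,V)$ follows from the c\`adl\`ag nature of the jump integrals and the Skorohod regulators. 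For claim~(2), the invertibility of every principal sub-matrix of $N'R$ and $\bar N'S$ upgrades the Skorohod map to a pathwise-Lipschitz single-valued map, so $\Phi$ contracts pathwise and delivers a unique adapted strong solution; pathwise uniqueness follows by applying the same contraction estimate to any two candidate solutions and invoking Gronwall.

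For claim~(1), where only the completely-$\mathcal{S}$ condition is assumed, the Skorohod map is generally multi-valued and pathwise iteration fails. I would then approximate $R$ and $S$ by sequences $R_{n},S_{n}$ satisfying the stronger invertibility of claim~(2), invoke the strong solutions just constructed, derive uniform $L^{2}$ bounds from the oscillation inequality, obtain tightness of the induced laws on the Skorohod path space $D([0,T];\mathbb{R}^{p+q})$ together with the boundary regulators, extract a subsequential weak limit by Prokhorov and the Skorohod representation, and identify the limit as an adapted weak solution; weak uniqueness (uniqueness in law) then follows from the mean-squared contraction computed along a joint realization via a Yamada--Watanabe-type argument adapted to the reflected jump setting. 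The principal obstacle will be precisely this passage to the limit: one must verify that the limit regulators $Y$ and $F$ still satisfy the support relations $Y_{i}(t)=\int_{0}^{t} I_{D_{i}}(X(s))\,dY_{i}(s)$ and its $F$-analogue, which requires a delicate weak-convergence argument for the occupation measures on the boundary faces while simultaneously handling the forward--backward coupling and the L\'evy jump integrals.
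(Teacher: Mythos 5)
Your treatment of claim (2) is essentially the paper's: under the invertibility hypothesis the Skorohod maps become single-valued Lipschitz maps (Harrison--Reiman/Chen--Yao type), and a Picard iteration with standard BDG/Gronwall estimates gives the unique strong solution. For claim (1) you diverge from the paper: the paper keeps the original $R,S$ and iterates the coupled system itself (each step solving a Skorohod/DCP problem and a martingale-representation step), proves relative compactness of the iterates via the oscillation inequality (Lemma~\ref{osclemma}) and a compact-containment/modulus estimate, and passes the boundary support relations to the limit by Lemma~\ref{complimentlemma}; you instead regularize the reflection matrices $R_n,S_n$ and take weak limits of the resulting strong solutions. Your existence route is plausible in outline (and you correctly flag the limit identification of $Y_i(t)=\int_0^t I_{D_i}(X(s))\,dY_i(s)$ as the delicate point), but two steps as written would fail.

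First, the uniqueness step. You propose to get uniqueness in law from "the mean-squared contraction computed along a joint realization via a Yamada--Watanabe-type argument." Under only the completely-${\cal S}$ condition there is no Lipschitz (indeed no single-valued) Skorohod map, so no mean-squared contraction between two solutions driven by the same noise is available --- this is precisely the obstruction you yourself invoke to explain why pathwise iteration fails; moreover Yamada--Watanabe runs through pathwise uniqueness, which is exactly what one cannot expect here (solutions of the Skorohod problem need not be pathwise unique). The paper proves weak uniqueness by a completely different mechanism: localization to $\epsilon$-neighborhoods of the boundary faces $D^{\epsilon}_{K}$, reduction to cones via a sequence of stopping times, and induction on the number of boundary faces in the style of Dai and Williams~\cite{daiwil:exiuni}; some argument of this kind (or another genuine substitute) is needed, and your proposal contains none. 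Second, the randomized coefficient $L(t,\omega)$ in \eq{fbconI}--\eq{afbconII} cannot be "absorbed" by a deterministic exponential weight $e^{\beta t}$: $L$ is only assumed mean-square integrable as in \eq{fbconIII} and may be unbounded in $\omega$, so no fixed $\beta$ dominates it. The paper handles this by localizing with the stopping times $\tau_n=\inf\{t:\|L(t)\|>n\}$ and first proving the constant-$L$ case; your argument needs the same (or an equivalent) localization before any contraction or tightness estimate can be closed.
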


Due to the length, the proof of Theorem~\ref{fbexist} is postponed
to Section~\ref{gfbproof}.

\subsection{Resolution via Coupled FB-SPDEs}

In this subsection, we consider a particular case of the coupled
FB-SPDEs in \eq{fbspdef} but with an additional equation, which
corresponds to the special forms of partial differential operators
$\bar{\cal L}$, $\bar{\cal J}$, and $\bar{\cal I}$. More precisely,
for each $l\in\{0,1,...,q\}$, we define
\begin{eqnarray}
&&\bar{{\cal L}}_{l}(t,x,U,V,\bar{V},\tilde{V},u)
\elabel{paretoopto}\\
&\equiv&\sum_{i,j=1}^{p}(\sigma\sigma')_{ij}(t,x,u)
\frac{\partial^{2}V_{l}(t,x)}{\partial x_{i}\partial
x_{j}}+\sum_{i=1}^{p}\left(b_{i}(t,x,u)+\sum_{j=1}^{b}v_{ij}\gamma_{j}(t,x)\right)\frac{\partial
V_{l}(t,x)}{\partial
x_{i}}\nonumber\\
&&+\sum_{j=1}^{d}\sum_{i=1}^{p}\sigma_{ji}(t,x,u)\frac{\partial
\alpha_{lj}(t,x,u)}{\partial x_{i}}-c_{l}(t,x,u)
+\sum_{k=1}^{q}s_{lk}\beta_{k}(t,x)\nonumber\\
&&-\sum_{j=1}^{h}\int_{{\cal
Z}}\left(V_{l}(t,x+\eta_{j}(t,x,u,z_{j}))-V_{l}(t,x)
-\sum_{i=1}^{p}\frac{\partial V_{l}(t,x)}{\partial
x_{i}}\eta_{ij}(t,x,u,z_{j})\right)\nu_{j}(dz_{j})
\nonumber\\
&&-\sum_{j=1}^{h}\int_{{\cal Z}}
\left(\tilde{\zeta}_{lj}(t,x+\eta_{j}(t,x,u,z_{j})),u,z_{j})
-\tilde{\zeta}_{lj}(t,x,u,z_{j})\right)\nu_{j}(dz_{j}), \nonumber
\end{eqnarray}
where $\eta_{ij}$ and $\eta_{j}$ for $i\in\{1,...,p\}$ and
$j\in\{1,...,h\}$ are the $(i,j)$th entry and the $j$th column of
$\eta$ respectively. Furthermore,
\begin{eqnarray}
c_{0}(t,x,u)&=&\sum_{l=1}^{q}c_{l}(t,x,u), \elabel{coexp}\\
\tilde{\zeta}_{0j}(t,x,u,z_{j}))&=&\sum_{l=1}^{q}\zeta_{lj}(t,x,u,z_{j})),
\elabel{coexpI}
\end{eqnarray}
and $\zeta_{lj}$ for $l\in\{1,...,q\}$ and $j\in\{1,...,h\}$ is the
$(i,j)$th entry of $\zeta$. In addition, $\gamma_{j}(t,x)$ for
$j\in\{1,...,b\}$ and $\beta_{k}(t,x)$ for $k\in\{1,...,q\}$ are
some functions in $t$ and $x$.

Note that, the partial derivative
\begin{eqnarray}
&&\frac{\partial \alpha_{lj}(t,x,u)}{\partial x_{i}}\;\;\mbox{for
each}\;\;l\in\{0,1,...,q\},\;i\in\{1,...,p\},\;j\in\{1,...,d\}
\nonumber
\end{eqnarray}
should be interpreted according to chain rule since $\alpha(t,x)$ is
also a function in $x$ through $(V,\bar{V},\tilde{V})(t,x)$ and
$u(t,x)$, where
\begin{eqnarray}
&&\alpha_{0j}(t,x,u)=\sum_{l=1}^{q}\alpha_{lj}(t,x,u).\elabel{alphazero}
\end{eqnarray}
%Furthermore, along each sample path of $Y$ or $F$, we have
%\begin{eqnarray}
%\dot{y}_{l}(t)&=&\frac{dy_{l}(t)}{dt}>0\;\;\mbox{for}\;\;l\in\{1,...,b\},
%\elabel{yderivative}\\
%\dot{f}_{l}(t)&=&\frac{df_{l}(t)}{dt}>0\;\;\mbox{for}\;\;l\in\{1,...,q\}
%\elabel{yderivative}
%\end{eqnarray}
%exist almost everywhere (a.e.) in $t\in[0,T]$ since $Y(t)$ and
%$F(t)$ are nondecreasing in $t\in[0,T]$.
Finally, we define
\begin{eqnarray}
%{\cal L}(t,x,U,V)&=&0,
%\elabel{oparetooptI}\\
%{\cal J}(t,x,U,V)&=&0,
%\elabel{paretooptI}\\
%{\cal I}(t,x,U,V,z)&=&0,
%\elabel{paretooptII}\\
\bar{{\cal J}}(t,x,U,V,\bar{V},\tilde{V})&=&-\bar{V}(t,x),
\elabel{paretooptI}\\
\bar{{\cal I}}(t,x,U,V,\bar{V},\tilde{V},z)&=&-\tilde{V}(t,x),
\elabel{paretooptII}\\
V(T,x)&=&H(x), \elabel{paretooptIII}
\end{eqnarray}
where, we assume that $H\in L^{2}_{{\cal F}_{T}}(\Omega,
C^{\infty}(D;R^{q}))$. Then, we have the following definition.
\begin{definite}\label{admissibled}
${\cal C}$ is called the admissible set of adapted control processes
if $\{\bar{{\cal L}}_{l}(t,x,U,V$,
$\bar{V},\tilde{V},u),l\in\{0,1,...,q\}\}$ together with $\{{\cal
L},{\cal J},{\cal I},\bar{{\cal J}},\bar{{\cal I}}\}$ satisfy the
conditions as stated in Theorem~\ref{bsdeyI} (or
Theorem~\ref{infdn}).
\end{definite}
\begin{theorem}\label{gtheoremo}
Let $(U(t,x),V(t,x),\bar{V}(t,x),\tilde{V}(t,x,\cdot))$ be the
unique adapted 4-tuple strong solution to the $(r,q+1)$-dimensional
coupled FB-SPDEs in \eq{fbspdef}, which corresponds to specific
$\{\bar{{\cal L}},\bar{{\cal J}},\bar{{\cal I}}\}$ in
\eq{paretoopto}-\eq{paretooptII}, terminal condition in
\eq{paretooptIII}, and a control process $u\in{\cal C}$. If $S$ and
$R$ satisfy the completely-${\cal S}$ condition, the following
claims in are true:
\begin{enumerate}
\item There exists a unique adapted 6-tuple weak solution $((X(t),Y(t))$,
$(V(t),\bar{V}(t),\tilde{V}(t,z),F(t)))$ to the system in
\eq{bsdehjb} when at least one of the SDEs has reflection boundary,
where
\begin{eqnarray}
V_{l}(t)&=&V_{l}(t,X(t)),\elabel{fbeqI}\\
\bar{V}_{lj}(t)&=&-\left(\alpha_{lj}(t,X(t),u)
+\sum_{i=1}^{p}\sigma_{li}(t,X(t),u)\frac{\partial
V_{l}(t,X(t))}{\partial x_{i}}\right),
\elabel{fbeqII}\\
\tilde{V}_{lj}(t,z))
&=&-\left(V_{l}(t,X(t)+\eta_{j}(t,X(t),u,z_{j}))-V_{l}(t,X(t))\right)
\elabel{fbeqIII}\\
&&-\zeta_{lj}(t,X(t)+\eta_{j}(t,X(t),u,z_{j}),u,z_{j}) \nonumber
\end{eqnarray}
for $l\in\{0,1,...,q\}$ and $j\in\{1,...,h\}$, where
\begin{eqnarray}
\;\;\;\;\;\;\;\;\alpha(t,X(t),u)&=&\alpha(t,X(t),V(t,X(t)),\bar{V}(t,X(t)),
\tilde{V}(t,X(t),\cdot),u(t,X(t)),\cdot),
\elabel{alphae}\\
\;\;\;\;\;\;\;\;\eta(t,X(t),u)&=&\eta(t,X(t),V(t,X(t)),\bar{V}(t,X(t)),
\tilde{V}(t,X(t),\cdot),u(t,X(t)),\cdot),
\elabel{alphae}\\
\;\;\;\;\;\;\;\;\zeta(t,X(t),u)&=&\zeta(t,X(t),V(t,X(t)),\bar{V}(t,X(t)),
\tilde{V}(t,X(t),\cdot),u(t,X(t)),\cdot); \elabel{zetae}
\end{eqnarray}
\item There is a unique adapted 6-tuple strong solution to the system in
\eq{bsdehjb} when each $q\times q$ sub-principal matrix of
$\bar{N}'S$ and each $p\times p$ sub-principal matrix of $N'R$ are
invertible or when both of the SDEs have no reflection boundaries.
\end{enumerate}
\end{theorem}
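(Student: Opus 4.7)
The plan is to combine the FB-SPDE well-posedness (Theorems~\ref{bsdeyI} and \ref{infdn}) with the FB-SDE well-posedness (Theorem~\ref{fbexist}) via It\^o's formula, so that the 4-tuple $(U,V,\bar V,\tilde V)$ of the FB-SPDE in \eq{fbspdef} --- with $(\bar{\mathcal L},\bar{\mathcal J},\bar{\mathcal I})$ specialised as in \eq{paretoopto}--\eq{paretooptII} --- provides the ``unknowns'' on the backward side of \eq{bsdehjb}. First, given any $u\in\mathcal{C}$, Definition~\ref{admissibled} puts the $(q+1)$-dimensional augmented system (including the summed component $V_0$ through \eq{coexp}--\eq{coexpI}) into the hypotheses of Theorem~\ref{bsdeyI} (or Theorem~\ref{infdn}), producing a unique adapted 4-tuple strong solution $(U(t,x),V(t,x),\bar V(t,x),\tilde V(t,x,\cdot))$. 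In parallel, Theorem~\ref{fbexist} applied to the forward SDE in \eq{bsdehjb} yields $(X,Y)$ under the completely-$\mathcal{S}$ hypothesis on $R$.

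Next, I define the candidate backward unknowns by \eq{fbeqI}--\eq{fbeqIII} and verify directly that they solve the backward SDE in \eq{bsdehjb}. The key step is to apply the It\^o--Meyer formula for jump-diffusion semimartingales with Skorohod regulator to $V_l(t,X(t))$, $l\in\{0,1,\dots,q\}$, using the backward SPDE for the time-evolution of $V_l(t,x)$ and the forward SDE for $X(t)$. The resulting decomposition produces: a drift built from $-\bar{\mathcal L}_l$ plus the forward generator of $X$ acting on $V_l$; a Brownian integrand combining $-\bar{\mathcal J}_l$ with $\sigma'\nabla V_l$; a compensated-jump integrand combining $-\bar{\mathcal I}_l$ with $V_l(t,X(t^-)+\eta)-V_l(t,X(t^-))$; and a boundary contribution of the form $(\nabla V_l)'R\,dY$. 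The precise expression chosen for $\bar{\mathcal L}_l$ in \eq{paretoopto} is reverse-engineered so that, after cancellation, the $dt$-drift collapses exactly onto $c_l(t,X(t),u)+\sum_k s_{lk}\beta_k(t,X(t))$, while the corner terms $\sum_j v_{ij}\gamma_j$ and $\sum_k s_{lk}\beta_k$ are absorbed, respectively, into $R\,dY$ and into $S\,dF$. Matching the $dW$ and $\tilde N$ integrands then gives exactly \eq{fbeqII} and \eq{fbeqIII}, and evaluating at $t=T$ against \eq{paretooptIII} recovers $V(T)=H(X(T))$.

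The regulator $F$ is constructed from a Skorohod problem associated with $V$ on $\bar D$ with reflection matrix $S$: because $S$ is completely-$\mathcal{S}$, the Skorohod map theory invoked in the proof of Theorem~\ref{fbexist} (Dai~\cite{dai:broapp}, Dai and Dai~\cite{daidai:heatra}) produces a nondecreasing predictable $F$ with $F(0)=0$ and support in $\bigcup_i\{t:V(t)\in\bar D_i\}$, and the compensating functions $\beta_k$ in \eq{paretoopto} encode exactly this pushing term. Uniqueness of the adapted 6-tuple weak solution then follows by concatenation: the FB-SPDE solution is unique by Theorems~\ref{bsdeyI}/\ref{infdn}, $(X,Y)$ is unique in law by Theorem~\ref{fbexist}, and $(\bar V,\tilde V,F)$ are functionally determined from $(V,X,Y)$ through \eq{fbeqII}--\eq{fbeqIII} and the backward Skorohod map.

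For the second claim, invertibility of each principal sub-matrix of $\bar N'S$ and $N'R$ (or the absence of reflection) upgrades both Skorohod problems to pathwise uniqueness, so Theorem~\ref{fbexist} gives a strong $(X,Y)$; the formulas \eq{fbeqI}--\eq{fbeqIII} together with the uniquely solvable backward Skorohod problem for $F$ then deliver a unique adapted 6-tuple strong solution. I expect the principal obstacle to be the simultaneous handling of jumps and skew boundary reflections in the It\^o expansion, coupled with the oscillation-inequality estimates needed to control both regulators $Y$ and $F$ while they are linked through $\nabla V$; it is precisely the algebraic structure of $\bar{\mathcal L}_l$ in \eq{paretoopto} that makes this closure possible.
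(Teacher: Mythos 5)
Your overall architecture is the same as the paper's: the paper's entire proof consists of (i) invoking Theorem~\ref{bsdeyI} to get the unique 4-tuple strong solution of the $(r,q+1)$-dimensional FB-SPDEs with the operators \eq{paretoopto}--\eq{paretooptII} and terminal condition \eq{paretooptIII} for the given $u\in{\cal C}$, and then (ii) substituting $(V(t),\bar V(t),\tilde V(t,\cdot))\equiv(V(t,X(t)),\bar V(t,X(t)),\tilde V(t,X(t),\cdot))$ into the coupled system \eq{bsdehjb} and applying Theorem~\ref{fbexist} wholesale to that coupled system. Your first and last steps reproduce exactly this. Where you diverge is in the middle: you add a full It\^o--Wentzell verification that the candidates \eq{fbeqI}--\eq{fbeqIII} solve the backward equation, and you construct $F$ from a separate ``backward'' Skorohod problem. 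The paper does neither; it obtains the whole 6-tuple (including $F$, $\bar V$, $\tilde V$) at once from Theorem~\ref{fbexist} applied to \eq{bsdehjb} with the composed coefficients, and the identification formulas are left implicit in the substitution. Your verification layer is the natural way to actually justify \eq{fbeqII}--\eq{fbeqIII} (the paper glosses over this), so in that sense your proposal is more detailed than the printed proof.

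Two cautions on that extra layer. First, the claimed \emph{exact} collapse of the drift and the ``absorption'' of the corner terms into $R\,dY$ and $S\,dF$ is not available in the reflected case: $\gamma(t,x)$ and $\beta(t,x)$ in \eq{paretoopto} are fixed smooth fields, whereas $dY$ and $dF$ are singular (local-time-type) measures, so $\gamma\,dt=dY$ and $\beta\,dt=dF$ can hold only approximately. The paper is consistent with this: it never performs the It\^o matching in Theorem~\ref{gtheoremo}, and in Theorem~\ref{gtheorem} it correspondingly claims only an \emph{approximated} equilibrium when a reflection boundary is present, with $\gamma,\beta$ taken as smooth approximations of $dY/dt$, $dF/dt$. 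If you insist on exact cancellation you are proving more than the paper does, and that step would fail. Second, you cannot apply Theorem~\ref{fbexist} ``to the forward SDE'' in isolation: its coefficients depend on the backward unknowns, so the correct move (the paper's) is to compose the FB-SPDE field with $X$ and then check that the composed coefficients of the \emph{whole} coupled system satisfy the randomized linear growth and Lipschitz conditions \eq{fbconI}--\eq{fbconIII} (which follows from the $C^\infty(D)$-norm control of the FB-SPDE solution), after which Theorem~\ref{fbexist} delivers existence, weak uniqueness under the completely-${\cal S}$ condition, and the strong statement under the invertibility hypothesis, exactly as in your final two steps.
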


\section{Connections to Non-Zero-Sum SDGs and Queues}\label{queuesdg}

\subsection{Non-Zero-Sum SDGs}

By Theorem~\ref{gtheoremo}, we suppose that the $4$-tuple
$(X,V,\bar{V},\tilde{V})$ in \eq{utilityu} is part of a solution
$(X,Y,V,\bar{V},\tilde{V},F)$ to the non-Markvian system of coupled
FB-SDEs with L\'evy jumps and skew reflections in \eq{bsdehjb}.
Then, let $u(\cdot)$ be the corresponding $B$-valued ($B\subset
R^{q}$) and $\{{\cal F}_{t}\}$-adapted control process, whose $l$th
component $u_{l}(\cdot)$ for each $l\in\{1,...,q\}$ is the $l$th
player's control policy. Furthermore, we assume that the utility
function for each player $l\in\{1,...,q\}$ is defined by
\begin{eqnarray}
&&\left\{\begin{array}{ll}
        c_{l}(t,X(t),u)&\equiv
        c_{l}(t,X(t),V(t,X(t)),\bar{V}(t,X(t)),\tilde{V}(t,X(t)),u(t,X(t))),\\
        c_{0}(t,X(t),u)&\equiv\sum_{l=1}^{q}c_{l}(t,X(t),u),
       \end{array}
\right.
\elabel{utilityu}
\end{eqnarray}
Thus, it follows from \eq{alphae}-\eq{zetae},
\eq{coexpI}-\eq{alphazero}, and \eq{utilityu} that the value
functions $\{V^{u}_{l}(0),l\in\{1,...,q\}\}$ in \eq{gameopto} are
now well defined. Then, we can introduce the following concepts.
\begin{definite}
By a non-zero-sum SDG to the system in \eq{bsdehjb}, we mean that
each player $l\in\{1,...,q\}$ chooses an optimal policy to maximize
his own value function expressed in \eq{gameopto}. Furthermore, the
value functions $\{V^{u}_{l}(0),l\in\{1,...,q\}\}$ do not have to
add up to a constant (e.g., zero), or in other words, the SDG is not
necessarily a zero-sum one.
\end{definite}
\begin{definite}
$u^{*}(\cdot)$ is called a Pareto optimal Nash equilibrium policy
process if, the process is also an optimal one to the sum of all the
$q$ players' value functions at time zero; no player will profit by
unilaterally changing his own policy when all the other players'
policies keep the same. Mathematically,
\begin{eqnarray}
&V_{0}^{u^{*}}(0)\geq V_{0}^{u}(0),\;\;\;V_{l}^{u^{*}}(0)\geq
V_{l}^{u^{*}_{-l}}(0) \elabel{nequilibrium}
\end{eqnarray}
for each $l\in\{0,1,...,q\}$ and any given admissible control policy $u$,
where
\begin{eqnarray}
&u^{*}_{-l}=(u_{1}^{*},...,u^{*}_{l-1},u_{l},u^{*}_{l+1},...,u^{*}_{q}).
\nonumber
\end{eqnarray}
\end{definite}
\begin{definite}
$\{\bar{{\cal
L}}_{l}(t,x,U,V,\bar{V},\tilde{V},u),l\in\{0,1,...,q\}\}$ together
with $\{{\cal L},{\cal J},{\cal I}\}$ are called satisfying the
comparison principle in terms of $u$ if, for any two $u^{i}\in{\cal
C}$ with $i\in\{1,2\}$ and any two ${\cal F}_{T}$-measurable $H^{i}$
with associated two solutions
$(U^{i},V^{i},\bar{V}^{i},\tilde{V}^{i})(t,x)$, respectively, of
\eq{bsdehjb} such that
\begin{eqnarray}
\bar{{\cal
L}}_{l}(t,x,U^{1},V^{1},\bar{V}^{1},\tilde{V}^{1},u^{1})&\leq&\bar{{\cal
L}}_{l}(t,x,U^{2},V^{2},\bar{V}^{2},\tilde{V}^{2},u^{2}),\nonumber\\
H^{1}(x)&\leq& H^{2}(x) \nonumber
\end{eqnarray}
for all $(t,x)\in[0,T]\times D$, we have
\begin{eqnarray}
&V^{1}(t,x)\leq V^{2}(t,x).
\nonumber
\end{eqnarray}
\end{definite}
\begin{theorem}\label{gtheorem}
Let $(U(t,x),V(t,x),\bar{V}(t,x),\tilde{V}(t,x,\cdot))$ be the
unique adapted 4-tuple strong solution to the $(r,q+1)$-dimensional
FB-SPDEs in \eq{fbspdef}, which corresponds to specific
$\{\bar{{\cal L}},\bar{{\cal J}},\bar{{\cal I}}\}$ in
\eq{paretoopto}-\eq{paretooptII}, terminal condition in
\eq{paretooptIII}, and a control process $u\in{\cal C}$. Suppose
that $S$ and $R$ satisfy the completely-${\cal S}$ condition. If
$\{\bar{{\cal
L}}_{l}(t,x,U,V,\bar{V},\tilde{V},u),l\in\{0,1,...,q\}\}$ together
with $\{{\cal L},{\cal J},{\cal I},\bar{{\cal J}},\bar{{\cal I}}\}$
for suitably chosen $\gamma(t,x)$ and $\beta(t,x)$ satisfy the
comparison principle in terms of $u$, the following two claims are
true:
\begin{enumerate}
\item There is a Pareto optimal Nash equilibrium
point $u^{*}(t,X(t))$ to the non-zero-sum SDG problem in
\eq{gameopto} when both of the SDEs in \eq{bsdehjb} have no
reflection boundaries and if $\gamma(t,x)=\beta(t,x)\equiv 0$;
\item There is an approximated Pareto optimal Nash
equilibrium point $u^{*}(t,X(t))$ to the non-zero-sum SDG problem in
\eq{gameopto} when at least one of the SDEs in \eq{bsdehjb} has
reflection boundary and if $\gamma(t,x)$, $\beta(t,x)$ are taken to
be infinitely smooth approximated functions of $\frac{dF}{dt}$(t,x)
and $\frac{dY}{dt}(t,x)$ in $x$.
\end{enumerate}
\end{theorem}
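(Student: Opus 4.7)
The plan is to convert the non-zero-sum SDG problem into a search for a control that simultaneously maximizes the drift $\bar{\cal L}_l$ for every $l\in\{0,1,\ldots,q\}$, and then use the comparison principle hypothesis to upgrade this pointwise maximization into optimality of the associated value functions. First, fix any admissible $u\in{\cal C}$ and invoke \Theorem{gtheoremo} to produce the adapted 6-tuple solution $(X,Y,V,\bar V,\tilde V,F)$ of \eq{bsdehjb}, with $V_l(t)=V_l(t,X(t))$ as in \eq{fbeqI} so that $V_l(0,x)$ is the initial value for player $l$ under $u$. Apply It\^o's formula for jump-diffusions with reflection to $V_l(t,X(t))$: the specific form of $\bar{\cal L}_l$ in \eq{paretoopto} is engineered precisely so that the second-order, first-order, diffusion-cross, and compensated-jump contributions cancel against the backward SPDE for $V_l$, leaving a telescoping identity of the form
\begin{equation*}
V_l(0,x)\;=\;E\!\left[H_l(X(T))+\int_0^T c_l(t,X(t),u)\,dt + \mathrm{(regulator\ terms)}\right].
\end{equation*}

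For claim 1 (no reflection, $\gamma=\beta\equiv0$), the regulator terms disappear and the identity above reads exactly as the expected utility of player $l$ under $u$, so maximizing $V_l(0,x)$ over $u$ is equivalent to maximizing the value function of the SDG. Using a standard measurable-selection argument on the pointwise optimization of the drift, I select $u^{*}(t,x)$ satisfying
\begin{equation*}
\bar{{\cal L}}_l(t,x,U,V,\bar V,\tilde V,u^{*})\;\geq\;\bar{{\cal L}}_l(t,x,U,V,\bar V,\tilde V,u)
\end{equation*}
for every $l\in\{0,1,\ldots,q\}$ and every admissible $u$, and analogously for the Nash deviations $u^{*}_{-l}$. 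The comparison principle assumption then lifts this pointwise inequality in the driver to the solution level, yielding $V_l^{u}(0)\leq V_l^{u^{*}}(0)$ and $V_l^{u^{*}_{-l}}(0)\leq V_l^{u^{*}}(0)$, which is exactly the Pareto optimal Nash equilibrium condition \eq{nequilibrium}.

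For claim 2, when reflection is present the regulator terms $R\,dY$ and $S\,dF$ contribute extra drift pieces after It\^o expansion; these are exactly what the terms $\sum v_{ij}\gamma_j$ and $\sum s_{lk}\beta_k$ in \eq{paretoopto} are meant to absorb. Since $Y$ and $F$ are Skorohod regulators supported on the boundary and are generally not absolutely continuous, one cannot set $\gamma=dY/dt$ and $\beta=dF/dt$ literally; instead I approximate them by infinitely smooth densities $(\gamma_n,\beta_n)$ obtained by mollifying the occupation measures of $(Y,F)$ on $[0,T]\times D$. For each $n$, the argument of claim 1 applied with $(\gamma_n,\beta_n)$ yields a control $u^{*}_n$ that is Pareto--Nash up to an error measuring the quality of the approximation. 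Passing to the limit via the well-posedness of \eq{fbspdef} and \eq{bsdehjb} (\Theorem{bsdeyI}, \Theorem{infdn}, \Theorem{fbexist}) and the stability of FB-SPDE solutions under the comparison principle, I extract the claimed approximated equilibrium $u^{*}$.

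The main obstacle lies in claim 2: because the Skorohod regulators are singular in time and, under only the completely-${\cal S}$ condition, the underlying Skorohod problem may admit multiple weak solutions, one must control the approximation error between $(\gamma_n,\beta_n)$ and the true regulator densities uniformly over the admissible control set while remaining within the weak-solution framework. A secondary difficulty is constructing a single measurable selection $u^{*}$ that maximizes all $q+1$ drivers $\bar{\cal L}_0,\bar{\cal L}_1,\ldots,\bar{\cal L}_q$ simultaneously; this is precisely the content built into the comparison-principle hypothesis for suitably chosen $\gamma$ and $\beta$, so that the theorem reduces this delicate compatibility issue to an assumption rather than a conclusion.
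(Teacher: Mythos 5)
Your overall strategy coincides with the paper's: part 1 is a verification argument in the spirit of the stochastic-HJB approach of \O ksendal {\em et al.} (the paper simply cites it as a ``direct extension'' of the single-dimensional case), i.e.\ represent $V_l(0,x)$ via the FB-SPDE/FB-SDE link of Theorem~\ref{gtheoremo}, maximize the drivers $\bar{\cal L}_l$ pointwise in $u$, and use the assumed comparison principle to lift the driver inequalities to the ordering \eq{nequilibrium} of value functions; part 2 handles reflection by replacing the regulator contributions by smooth functions $\gamma,\beta$ and reducing to part 1. Where you differ is the smoothing device and the closing step: the paper observes that, $Y$ and $F$ being nondecreasing, the derivatives $\frac{dY}{dt}$ and $\frac{dF}{dt}$ exist a.e.\ along each sample path, approximates them by polynomials in $x$ (Ethier--Kurtz, Prop.~7.1), fixes such a smooth pair $(\gamma,\beta)$, and then simply reruns part 1 --- no limit is taken, because claim 2 only asserts an \emph{approximated} equilibrium for the chosen smooth $(\gamma,\beta)$. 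You instead mollify the occupation measures of $(Y,F)$ and then try to pass to the limit $n\to\infty$, and you flag the uniform-in-$u$ error control and the weak non-uniqueness under the completely-${\cal S}$ condition as the main unresolved obstacle. That obstacle is an artifact of over-reaching: for the statement as written you may stop at a fixed smooth approximation, exactly as the paper does, and your limiting argument (together with its uniformity requirement) is neither carried out nor needed.

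Two further soft spots. First, your telescoping identity ``regulator terms disappear'' in claim 1 is fine, but in claim 2 the assertion that the $\sum_j v_{ij}\gamma_j$ and $\sum_k s_{lk}\beta_k$ terms in \eq{paretoopto} ``absorb'' $R\,dY$ and $S\,dF$ is only approximate (the regulators are singular in $t$ and supported on the boundary), and this is precisely why the conclusion is weakened to an approximated equilibrium; you should state the verification identity with an explicit error term rather than as an exact cancellation. Second, the existence of a single measurable selection $u^{*}$ that maximizes all $q+1$ drivers $\bar{\cal L}_0,\dots,\bar{\cal L}_q$ simultaneously (and is stable under unilateral deviations $u^{*}_{-l}$) is not ``the content of the comparison-principle hypothesis'': the comparison principle only gives monotonicity of the solution map with respect to ordered drivers and terminal data, and says nothing about the compatibility of the $q+1$ pointwise maximizations. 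The paper implicitly defers this selection issue to the cited \O ksendal {\em et al.} construction; in your write-up it should be stated as an additional step (measurable selection for the Hamiltonian system of all players), not attributed to the comparison assumption.
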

%\begin{remark}
%Note that, if the Nash equilibrium point $u^{*}$ is known, we can
%employ Algorithm~\ref{algorithmI} to compute $(V,\bar{V},\tilde{V})$
%directly. Otherwise, for $c=0$ and at each time point, we can add
%one more optimization step in \eq{IIalgorithmI} of
%Algorithm~\ref{algorithmI} to first estimate $u^{*}$ with respect to
%$u\in U$ and then compute $(V,\bar{V},\tilde{V})$ (see, e.g., the
%related discussion in Arn\v{a}utu and
%Neittaanm$\ddot{a}$ki~\cite{arnei:optcon}).
%\end{remark}

\subsection{Queues and Reflecting Diffusions}

Queueing networks widely appear in real-world applications such as
those in service, cloud computing, and communication systems. They
typically consist of arrival processes, service processes, and
buffer storages with certain kind of service regime and network
architecture (see, e.g., an example with $p$-job classes in
Figure~\ref{queueI}). The major performance measure for this system
is the {\em queue length process} denoted by
$Q(\cdot)=(Q_{1}(\cdot),...,Q_{p}(\cdot))'$, where $Q_{i}(t)$ is the
number of $i$th class jobs stored in the $i$th buffer for each
$i\in\{1,...,p\}$ at time $t$. Let $Q(0)$ be the initial queue
length for the system. Then, the queueing dynamics of the system can
be presented by
\begin{eqnarray}
&&Q(t)=Q(0)+A(t)-D(t),\elabel{dqueue}
\end{eqnarray}
where, the $i$th component $A_{i}(t)$ of $A(t)$ for each
$i\in\{1,...,p\}$ is the total number of jobs arrived to buffer $i$
by time $t$, and the $i$th component $D_{i}(t)$ of $D(t)$ is the
total number of jobs departed from buffer $i$ by time $t$. In the
following discussions, we use two generalized ways to characterize
the arrival and departure processes.

First, we assume that each $A_{i}(\cdot)$ for $i\in\{1,...,p\}$ is a
time-inhomogeneous L\'evy process with intensity measure
$a_{i}(t,Q(t),z_{i})dt\nu_{i}(dz_{i})$ that is the job arrival rate
to buffer $i$ at time $t$ and depends on the queue state at time
$t$. Similarly, we assume that each $D_{i}(\cdot)$ is also a
time-inhomogeneous L\'evy process with intensity measure
$d_{i}(t,Q(t),z_{i})dt\nu_{i}(dz_{i})$ that is the assigned service
rate to buffer $i$ at time $t$. Furthermore, we assume that the
routing proportion from buffer $j$ to buffer $i$ for jobs finishing
service at buffer $j$ is $p_{ji}(t,Q(t),z_{j})$. Then, by the F-SDE
in \eq{bsdehjb} and the discussions in Applebaum~\cite{app:levpro},
the queue length process in \eq{dqueue} for this case can be further
expressed by
\begin{eqnarray}
dQ_{i}(t) &=&\left(\int_{{\cal
Z}}a_{i}(t,Q(t),z_{i})\nu_{i}(dz_{i})\right.
\elabel{lqueue}\\
&&+\sum_{j\neq i}\int_{{\cal
Z}}p_{ji}(t,Q(t),z_{j})d_{j}(t,Q(t),z_{j})I_{\{Q_{j}(t)>0\}}\nu_{j}(dz_{j})
\nonumber\\
&&\left.-\int_{{\cal
Z}}d_{i}(t,Q(t),z_{i})I_{\{Q_{i}(t)>0\}}\nu_{i}(dz_{i})\right)dt
\nonumber\\
&&+\int_{{\cal Z}}a_{i}(t,Q(t),z_{i})\tilde{N}_{i}(dt,dz_{i})
\nonumber\\
&&+\sum_{j\neq i}\int_{{\cal
Z}}p_{ji}(t,Q(t),z_{j})d_{j}(t,Q(t),z_{j})I_{\{Q_{j}(t)>0\}}\tilde{N}_{j}(dt,dz_{j})
\nonumber\\
&&-\int_{{\cal
Z}}d_{i}(t,Q(t),z_{i})I_{\{Q_{i}(t)>0\}}\tilde{N}_{i}(dt,dz_{i})
\nonumber\\
&&+\sum_{j=1}^{b}R_{ij}(t,Q(t))dY_{j}(t), \nonumber
\end{eqnarray}
where, ${\cal Z}=R_{+}$, $Y_{j}(t)$ in \eq{lqueue} for each
$j\in\{1,...,b\}$ is the Skorohod regulator process and it can
increase only at time $t$ when $Q_{j}(t)=0$. Note that $R(t,Q(t))$
is a reflection matrix that may be time and queue state dependent,
and the coefficients in \eq{lqueue} may be discontinuous at the
queue state $Q_{i}(t)=0$. However, since the system in \eq{lqueue}
is designed in a controllable manner, the service rate
$d_{i}(s,Q(s))$ can always be set to be zero when $Q_{i}(t)=0$,
which implies that the reflection part in \eq{lqueue} can be
removed. Hence, the generalized Lipschitz and linear growth
conditions in \eq{fbconI}-\eq{afbconII} may be reasonably imposed to
the system in \eq{lqueue}. Thus, the system derived in \eq{lqueue}
can be well-posed. Furthermore, the optimal policies in terms of
cost, profit, and system performance can be designed and analyzed
(see, e.g., the related illustration in the coming
Subsection~\ref{queuegame}). Interested readers can also find some
specific formulations of the queueing system \eq{lqueue} in
Mandelbaum and Massey~\cite{manmas:strapp}, Mandelbaum and
Pats~\cite{manpat:stadep}, and Konstantopoulos {\it et
al.}~\cite{konlas:clalev}, etc.

Second, we assume that both the arrival and service processes are
described by renewal processes, renewal reward processes, or doubly
stochastic renewal processes. In this case, the driven processes for
the queueing system do not have the nice statistical properties such
as memoryless and stationary increment ones. Thus, it is usually
impossible to conduct exact analysis concerning the distribution of
$Q(\cdot)$. However, under certain conditions (e.g., the arrival
rates close to the associated service rates), one can show that the
corresponding sequence of diffusion-scaled queue length processes
converges in distribution to a $p$-dimensional reflecting Brownian
motion (RBM) (see, e.g., Dai~\cite{dai:broapp}, Dai and
Dai~\cite{daidai:heatra}, Dai and Jiang~\cite{daijia:stoopt}), or
more generally, a reflecting diffusion with regime switching (RDRS)
(see, e.g., Dai~\cite{dai:optrat}). In other words, we have that
\begin{eqnarray}
&&\hat{Q}^{r}(\cdot)\equiv\frac{1}{r}Q(r^{2}\cdot)\Rightarrow
\hat{Q}(\cdot)\;\;\;\mbox{along}\;\;r\in\{1,2,...\}, \elabel{weakc}
\end{eqnarray}
where ``$\Rightarrow$" means ``converges in distribution" and
$\hat{Q}(\cdot)$ is a RBM or a RDRS.

To be simple, we consider the case that the limit $\hat{Q}(\cdot)$
in \eq{weakc} is a RBM living in the state space $D$ introduced in
Section~\ref{fbsdes}. Furthermore, let $\theta$ be a vector in
$R^{p}$ and $\Gamma$ be a $p\times p$ symmetric and positive
definite matrix. Then, we can introduce the definition of a RBM
(see, e.g, Dai~\cite{dai:broapp}) as follows.
\begin{definite}\label{srbm}
A semimartingale RBM associated with the data $({\bf
S},\theta,\Gamma,R)$ that has initial distribution $\pi$ is a
continuous, $\{{\cal F}_{t}\}$-adapted, $p$-dimensional process $Z$
defined on some filtered probability space $(\Omega,{\cal F},\{{\cal
F}_{t}\},{\bf P})$ such that under ${\bf P}$,
\begin{eqnarray}
X(t)=Z(t)+RY(t)\;\;for\;all\;t\geq 0, \nonumber
\end{eqnarray}
where
\begin{enumerate}
\item $X$ has continuous paths in ${\bf S}$, ${\bf P}$-a.s.,
\item under ${\bf P}$, $Z$ is a $p$-dimensional Brownian motion with
drift vector $\theta$ and covariance matrix $\Gamma$ such that
$\{Z(t)- \theta t,{\cal F}_{t},t \geq 0\}$ is a martingale and
$P{Z}^{-1}(0)=\pi$,
\item $Y$ is a $\{{\cal F}_{t}\}$-adapted, $b$-dimensional process such that
${\bf P}$-a.s., for each $i\in\{1,...,b\}$, the $i$th component
$Y_{i}$ of $Y$ satisfies
\begin{enumerate}
\item $Y_{i}(0)=0$,
\item $Y_{i}$ is continuous and non-decreasing,
\item $Y_{i}$ can increase only when Z is on the face $D_{i}$, i.e.,
as given in \eq{bsdehjb}.
%\begin{eqnarray}
%\int_{0}^{t}I_{D_{i}}(X(s))dY_{i}(s) = Y_{i}(t)\;for\;all\;t\geq 0.
%\nonumber
%\end{eqnarray}
\end{enumerate}
\end{enumerate}
\end{definite}
From the physical viewpoint of queueing system (see, e.g.,
Dai~\cite{dai:broapp,dai:optrat}) and the discussion in Reiman and
Williams~\cite{reiwil:boupro}, the pushing process $Y$ in
Definition~\ref{srbm} can be assumed to a.s. satisfy
\begin{eqnarray}
Y_{i}(t)=\int_{0}^{t}I_{D_{i}}(X(s))ds. \elabel{equivy}
\end{eqnarray}

Now, assume that $H(x)$ is the stationary distribution that we
expect for the RBM $X$. For example, in reality, it is the given
distribution of the long-run average queue lengths among different
users or job classes. Theoretically, it can be computed by a method
(e.g., the finite element method designed and implemented in Dai
{\em et al.}~\cite{dai:broapp,shechedaidai:finele}).
%In other words, we can assume that
%\begin{eqnarray}
%H(x)=(p_{0}(x),p_{1}(x),...,p_{b}(x))',\elabel{rbmbc}
%\end{eqnarray}
%where $p_{0}$ is the stationary probability density of the RBM $X$
%in the interior of its state space and $p_{i}$ for $i\in\{1,...,b\}$
%is the measure density on boundary face $F_{i}$.
Then, we can use a B-PDE or a B-SPDE (a special form of the system
in \eq{fbspdef}) to get the transition function at each time point
to reach the targeted or limiting stationary distribution $H(x)$ for
the RBM $X$ for a given initial distribution (e.g., $X(0)=0$ a.s. in
many situations). Hence, the corresponding performance measures of
the physical queueing system can be estimated. More precisely, we
have the following theorem and related remark.
\begin{theorem}\label{srbmth}
Suppose that the reflection matrix satisfies the completely-${\cal
S}$ condition. Then, the transition function of the RBM $X$ over
$[0,T]$ is determined by
\begin{eqnarray}
\;\;\;\;\;V(t,x)&=&H(x)+\int_{t}^{T}{\cal L}(s,x,V)ds,
\elabel{bspdefqueue}
\end{eqnarray}
where $V$ is a $1$-dimensional function. Furthermore, ${\cal L}$ is
the following form of partial differential operator
\begin{eqnarray}
{\cal L}(t,x,V,\cdot)&=&({\cal K}(t,x,V,\cdot),{\cal
D}_{1}(t,x,V,\cdot),...,{\cal D}_{b}(t,x,V,\cdot)),
\elabel{concretefI}\\
{\cal
K}(t,x,V,\cdot)&=&\sum_{i,j=1}^{p}\Gamma_{ij}\frac{\partial^{2}V(t,x)}{\partial
x_{i}\partial x_{j}}+\theta\cdot\bigtriangledown
V(t,x)+\sum_{i=1}^{b}{\cal D}_{i}(t,x,V,\cdot),
\elabel{concretefII}\\
{\cal D}_{i}(t,x,V,\cdot)&=&(v_{i}\cdot\bigtriangledown
V(t,x))I_{D_{i}}(x) \;\; \mbox{for}\;\;x\in
D_{i}\;\;\mbox{with}\;\;i\in\{1,...,b\}, \elabel{concretefIII}
\end{eqnarray}
where $\bigtriangledown V$ is the gradient vector of $V$ in $x$ and
$I_{F_{i}}$ is the indicator function over the set $F_{i}$.
%In addition, the coefficients $a_{ij}(x)$, $b_{j}(x)$, and $c(x,V)$
%are some given functions.
\end{theorem}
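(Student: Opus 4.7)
The plan is to prove Theorem~\ref{srbmth} by combining the well-posedness results for FB-SPDEs (Theorem~\ref{bsdeyI} or \ref{infdn}) with a probabilistic representation derived from It\^o's formula applied along the reflecting Brownian motion $X$. First, I would recognize \eq{bspdefqueue} as a special instance of \eq{fbspdef} with $r=0$ (no forward component), $q=1$, $\bar{\cal J}=\bar{\cal I}\equiv 0$, and $\bar{\cal L}={\cal L}$ given by \eq{concretefI}--\eq{concretefIII}. The operator ${\cal L}$ is linear in $V$ with coefficients that are spatial constants ($\Gamma_{ij},\theta_i,v_{ij}$) or bounded indicator functions $I_{D_i}(x)$, so the generalized local Lipschitz and linear growth conditions \eq{blipschitz}--\eq{nlipoI} reduce to uniform boundedness of those coefficients. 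After a mollification step (see below), Theorem~\ref{bsdeyI} then yields a unique adapted solution $V(t,x)$ to \eq{bspdefqueue}.

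Next, I would identify this solution with the transition semigroup acting on $H$. Fix $(t,x)$ and apply It\^o's formula to $V(s,X(s))$ for $s\in[t,T]$, using the semimartingale decomposition $X(s)=Z(s)+RY(s)$ from Definition~\ref{srbm}. The martingale part of $Z$ contributes a local martingale; the drift of $Z$ yields $\theta\cdot\nabla V\,ds$; the quadratic variation of $Z$ produces $\sum_{i,j}\Gamma_{ij}\,\partial_{ij}V\,ds$ (with the customary factor $1/2$ absorbed into the convention for $\Gamma$); and the reflection $R\,dY$ yields $\sum_{i=1}^{b}(v_i\cdot\nabla V)(s,X(s))\,dY_i(s)$. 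Invoking the absolutely continuous characterization $dY_i(s)=I_{D_i}(X(s))\,ds$ from \eq{equivy} rewrites the reflection term as $\sum_{i=1}^{b}{\cal D}_i(s,X(s),V)\,ds$. Collecting all $ds$ contributions, the drift of $V(s,X(s))$ equals $\partial_s V+{\cal L}(s,X(s),V)$, which vanishes by \eq{bspdefqueue}. Thus $V(s,X(s))$ is a local martingale on $[t,T]$, upgraded to a true martingale by the linear growth bounds on $V$ and $H$. Taking $s=T$ and conditional expectations yields $V(t,x)=E_x[H(X(T))]$, identifying $V(t,\cdot)$ as the transition function targeted by the theorem.

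The main obstacle is the non-smoothness of the indicator functions $I_{D_i}$ in the coefficients of ${\cal L}$, since the norms \eq{fckupd}--\eq{inftynorm} require $C^\infty$ regularity in the spatial variable. My plan is to regularize by replacing each $I_{D_i}$ with a smooth cutoff $\chi_i^{\varepsilon}$ supported in an $\varepsilon$-neighborhood of the face $D_i$, solve the mollified backward equation via Theorem~\ref{bsdeyI} to obtain an approximant $V^{\varepsilon}$, and pass to the limit $\varepsilon\downarrow 0$ using the stability estimates underlying that theorem. A secondary subtlety is reconciling the identity \eq{equivy} with the classical Skorohod construction, in which $Y$ is singular with respect to Lebesgue measure; in that general setting I would reinterpret the reflection term against the boundary local time of $X$ on each face $D_i$, with the completely-${\cal S}$ condition ensuring both well-posedness of the Skorohod problem and the required representation.
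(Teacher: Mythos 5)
Your argument is essentially the paper's: the paper's (very terse) proof rests on the strong Markov property of the RBM under the completely-${\cal S}$ condition together with It\^o's formula and the Kolmogorov backward/Fokker--Planck equations, which is exactly the computation you carry out along $V(s,X(s))$ with $X=Z+RY$, the reflection term handled through \eq{equivy}. Your extra steps --- mollifying the indicators $I_{D_i}$ so that Theorem~\ref{bsdeyI} applies, and the local-time reinterpretation of \eq{equivy} --- correspond to the technical points the paper defers to the remark immediately following the theorem, so the route is the same, just spelled out in more detail.
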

\begin{proof}
It follows from the completely-${\cal S}$ condition that the RBM $X$
is a strong Markov process (see, e.g., Dai and
Williams~\cite{daiwil:exiuni}).
%(see, e.g., Harrison and Reiman~\cite{harrei:refbro}).
Then, by applying the It$\hat{o}$'s formula (see, e.g.,
Protter~\cite{pro:stoint}) and Fokker-Planck's formula (or called
Kolmogorov's forward/backward equations, see, e.g.,
$\emptyset$ksendal~\cite{oks:stodif}), we know that the claim stated
in the theorem is true. $\Box$
\end{proof}
\begin{remark}
Owing to the uncertainty error of measurement, $H(x)$ could be
random. Furthermore, the coefficients in \eq{concretefII} may also
be random, e.g., for the case that the limit $\hat{Q}(\cdot)$ is a
RDRS. Thus, a B-SPDE can be introduced. Furthermore, the indicator
function $I_{F_{i}}(x)$ can be approximated by a sufficient smooth
function in order to apply Theorem~\ref{bsdeyI} to the equation in
\eq{bspdefqueue}, which is reasonable from the viewpoint of
numerical computation.
\end{remark}

\subsection{Queueing Based Game Problem}\label{queuegame}

From the information system displayed in
Figures~\ref{gameI}-\ref{queueI} (presenting a parallel-server
queueing system with $q=p$), we can give an explanation about the
decision process for such a game problem. In this game, each player
(or called user in Dai~\cite{dai:optrat}) relates to a control
process $u_{l}(\cdot)$ for $l\in\{1,...,q\}$ over certain resource
pool (e.g., called the transmission rate allocation process over a
randomly evolving capacity region in Dai~\cite{dai:optrat}). In the
meanwhile, each player $l$ is assigned a surrogate utility function
$c_{l}$ of his submitted bid (called queue length in
Dai~\cite{dai:optrat}, or the approximated queue length RBM $Z$ in
Definition~\ref{srbm}) to the network, the price from the network to
him, and the control policy at each time point by the central
information administrative. Then, an optimal and/or fair control
process can be determined by the utility functions of all players,
queueing process, and the available resource constraint in a
cooperative way (see, e.g., Jones~\cite{jon:gamthe}).

\section{Proofs of Theorem~\ref{bsdeyI} and Theorem~\ref{infdn}}
\label{uniextproof}

We justify the two theorems by first proving three lemmas in the
following subsection.

\subsection{The Lemmas}

\begin{lemma}\label{martindecom}
Assume that the conditions in Theorem~\ref{bsdeyI} hold and take a
quadruplet for each fixed $x\in D$ and $z\in{\cal Z}^{h}$,
\begin{eqnarray}
&(U^{1}(\cdot,x),V^{1}(\cdot,x),\bar{V}^{1}(\cdot,x),\tilde{V}^{1}(\cdot,x,z))\in
{\cal Q}^{2}_{{\cal F}}([0,T]\times D).\elabel{threeu}
\end{eqnarray}
Then, there exists another quadruplet
$(U^{2}(\cdot,x),V^{2}(\cdot,x),\bar{V}^{2}(\cdot,x),\tilde{V}^{2}(\cdot,x,z))$
such that
\begin{eqnarray}
\;\;\;\;\;\;\;\;\left\{\begin{array}{ll}
U^{2}(t,x)&=G(x)+\int_{0}^{t}{\cal L}(s^{-},x,U^{1},V^{1},\bar{V}^{1},\tilde{V}^{1})ds\\
&\;\;\;\;\;+\int_{0}^{t}{\cal J}(s^{-},x,U^{1},V^{1},\bar{V}^{1},\tilde{V}^{1})dW(s)\\
&\;\;\;\;\;+\int_{0}^{t}\int_{{\cal Z}^{h}}{\cal
I}(s^{-},x,U^{1},V^{1},\bar{V}^{1},\tilde{V}^{1},z)\tilde{N}(\lambda ds,dz),\\
V^{2}(t,x)&=H(x)+\int_{t}^{T}\bar{{\cal L}}(s^{-},x,U^{1},V^{1},\bar{V}^{1},\tilde{V}^{1})ds\\
&\;\;\;\;\;+\int_{t}^{T}\left(\bar{{\cal
J}}(s^{-},x,U^{1},V^{1},\bar{V}^{1},\tilde{V}^{1})\right.\\
&\;\;\;\;\;\;\;\;\;\;\;\;\;\;\;
\left.+\bar{V}^{1}(s^{-},x)-\bar{V}^{2}(s^{-},x)\right)dW(s)\\
&\;\;\;\;\;+\int_{t}^{T}\int_{{\cal Z}^{h}}\left(\bar{{\cal
I}}(s^{-},x,U^{1},V^{1},\bar{V}^{1},\tilde{V}^{1},z)\right.\\
&\;\;\;\;\;\;\;\;\;\;\;\;\;\;\;
\left.+\tilde{V}^{1}(s^{-},x,z)-\tilde{V}^{2}(s^{-},x,z)\right)\tilde{N}(\lambda
ds,dz),
\end{array}
\right.
\elabel{sigmanV}
\end{eqnarray}
where $(U^{2},V^{2})$ is a $\{{\cal F}_{t}\}$-adapted c\`adl\`ag
process and $(\bar{V}^{2},\tilde{V}^{2})$ is the corresponding
predictable process. Furthermore, for each $x\in D$,
\begin{eqnarray}
&&E\left[\int_{0}^{T}\|U^{2}(t,x)\|^{2}dt\right]<\infty,
\elabel{maxnormo}\\
&&E\left[\int_{0}^{T}\|V^{2}(t,x)\|^{2}dt\right]<\infty,
\elabel{maxnormI}\\
&&E\left[\int_{0}^{T}\|\bar{V}^{2}(t,x)\|^{2}dt\right]<\infty,
\elabel{maxnormII}\\
&&E\left[\sum_{i=1}^{h}\int_{0}^{T}\int_{{\cal Z}}
\left\|\tilde{V}^{2}_{i}(t,x,z_{i})\right\|^{2}
\nu_{i}(dz_{i})dt\right]<\infty. \elabel{maxnormIII}
\end{eqnarray}
\end{lemma}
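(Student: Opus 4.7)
The plan is to decouple \eq{sigmanV} into a forward construction of $U^{2}$ and a backward construction of $(V^{2},\bar V^{2},\tilde V^{2})$ via a martingale representation argument. The key observation is that in each equation the right-hand side depends only on the fixed quadruplet $(U^{1},V^{1},\bar V^{1},\tilde V^{1})$ together with the left-hand unknowns themselves, so the two equations may be handled separately once the correct ${\cal F}_{T}$-measurable random variable is identified on the backward side.

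For the forward equation I simply \emph{define}, pointwise in $x\in D$,
\[
U^{2}(t,x)=G(x)+\int_{0}^{t}{\cal L}(s^{-},x,U^{1},V^{1},\bar V^{1},\tilde V^{1})ds+\int_{0}^{t}{\cal J}(s^{-},x,\ldots)dW(s)+\int_{0}^{t}\!\int_{{\cal Z}^{h}}{\cal I}(s^{-},x,\ldots,z)\tilde N(\lambda ds,dz).
\]
Since ${\cal L},{\cal J},{\cal I}$ are $\{{\cal F}_{t}\}$-adapted and satisfy the generalized linear-growth bounds \eq{fblipschitzoI}-\eq{nlipoI} at order $c=l=o=0$, with input $(U^{1},V^{1},\bar V^{1},\tilde V^{1})\in{\cal Q}^{2}_{{\cal F}}([0,T]\times D)$ and initial integrability \eq{flipic}-\eq{flipicADII}, the It\^o isometry for both the Brownian and compensated-Poisson integrals immediately yields \eq{maxnormo}, and $U^{2}(\cdot,x)$ is $\{{\cal F}_{t}\}$-adapted and c\`adl\`ag.

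For the backward equation, fix $x\in D$ and introduce the ${\cal F}_{T}$-measurable random variable
\[
\xi(x)=H(x)+\int_{0}^{T}\bar{\cal L}(s^{-},x,\ldots)ds+\int_{0}^{T}\bigl[\bar{\cal J}(s^{-},x,\ldots)+\bar V^{1}(s^{-},x)\bigr]dW(s)+\int_{0}^{T}\!\int_{{\cal Z}^{h}}\bigl[\bar{\cal I}(s^{-},x,\ldots,z)+\tilde V^{1}(s^{-},x,z)\bigr]\tilde N(\lambda ds,dz),
\]
which lies in $L^{2}(\Omega,{\cal F}_{T},P)$ by \eq{blipic}-\eq{blipicADII}, the linear-growth hypotheses, and the assumed integrability of the inputs. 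Form the $L^{2}$-bounded c\`adl\`ag martingale $M(t)=E[\xi(x)\mid{\cal F}_{t}]$ and apply the predictable representation property of $\{{\cal F}_{t}\}$ relative to $(W,\tilde N)$ to extract predictable processes $\bar V^{2}(\cdot,x)$ and $\tilde V^{2}(\cdot,x,\cdot)$ with
\[
M(t)=M(0)+\int_{0}^{t}\bar V^{2}(s,x)dW(s)+\int_{0}^{t}\!\int_{{\cal Z}^{h}}\tilde V^{2}(s,x,z)\tilde N(\lambda ds,dz),
\]
and It\^o-isometry bound $E\!\int_{0}^{T}\|\bar V^{2}\|^{2}ds+E\sum_{i}\!\int_{0}^{T}\!\int|\tilde V^{2}_{i}|^{2}\lambda_{i}\nu_{i}(dz_{i})ds\le E[\xi(x)^{2}]<\infty$, giving \eq{maxnormII}-\eq{maxnormIII}. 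Define
\[
V^{2}(t,x):=M(t)-\int_{0}^{t}\bar{\cal L}(s^{-},x,\ldots)ds-\int_{0}^{t}\bigl[\bar{\cal J}(s^{-},\ldots)+\bar V^{1}(s^{-},x)\bigr]dW(s)-\int_{0}^{t}\!\int\bigl[\bar{\cal I}(s^{-},\ldots,z)+\tilde V^{1}(s^{-},x,z)\bigr]\tilde N(\lambda ds,dz),
\]
so that $V^{2}(T,x)=\xi(x)-(\cdots)=H(x)$; subtracting the defining expression at time $t$ from the one at time $T$, and substituting $M(T)-M(t)=\int_{t}^{T}\bar V^{2}dW+\int_{t}^{T}\!\int\tilde V^{2}\tilde N$, reproduces the backward equation in \eq{sigmanV} exactly. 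Bound \eq{maxnormI} then follows from Doob's $L^{2}$ inequality applied to $M$ combined with standard $L^{2}$ estimates on each of the three subtracted integrals.

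The chief technical delicacy is the predictable representation step: since ${\cal F}_{t}=\sigma({\cal G},W(s),L(\lambda s):0\le s\le t)$ is the augmentation of the natural filtration of $(W,\tilde N)$ by an initial $\sigma$-algebra ${\cal G}$, one verifies representation by conditioning on ${\cal G}$ and applying the classical theorem for $(W,\tilde N)$ (cf.\ Applebaum~\cite{app:levpro}); the independence of ${\cal G}$ from $(W,L)$ assumed at the start of Section~\ref{uniexist} is essential here. Joint $(t,x)$-regularity of the triple $(V^{2},\bar V^{2},\tilde V^{2})$ need not be addressed within this lemma, since \eq{maxnormo}-\eq{maxnormIII} are strictly pointwise-in-$x$ statements; the upgrade to $C^{\infty}(D,\cdot)$-valued processes will be performed in the subsequent contraction/fixed-point argument in Section~\ref{uniextproof} that consumes this lemma.
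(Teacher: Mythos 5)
Your proposal is correct and follows essentially the same route as the paper: the forward component is defined directly and controlled by the linear-growth conditions and the It\^o isometry, while the backward component is obtained by applying the martingale representation theorem (Applebaum, Theorem 5.3.5) to the conditional expectation of the terminal quantity $H(x)$ plus the full integrals of $\bar{\cal L}$, $\bar{\cal J}+\bar V^{1}$, $\bar{\cal I}+\tilde V^{1}$ — your $M(t)$ is exactly the paper's $\hat V^{2}(t,x)$, and your subtraction formula for $V^{2}$ is the paper's identity \eq{sigmanIV}. Your added remark on handling the initial $\sigma$-algebra ${\cal G}$ by conditioning is a reasonable elaboration of a step the paper passes over by citation, but it does not change the argument.
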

\begin{proof}
For each fixed $x\in D$ and a quadruplet as stated in \eq{threeu},
it follows from conditions \eq{blipschitz}-\eq{blipicADII} that
\begin{eqnarray}
&&{\cal L}(\cdot,x,U^{1},V^{1},\bar{V}^{1},\tilde{V}^{1})\in L^{2}_{{\cal
F}}([0,T],C^{\infty}(D,R^{r})),
\elabel{ladaptedI}\\
&&{\cal J}(\cdot,x,U^{1},V^{1},\bar{V}^{1},\tilde{V}^{1})\in L^{2}_{{\cal
F}}([0,T],C^{\infty}(D,R^{r\times d})),
\elabel{ladaptedIai}\\
&&{\cal I}(\cdot,x,U^{1},V^{1},\bar{V}^{1},\tilde{V}^{1})\in
L^{2}_{{\cal F}}([0,T]\times{\cal Z}^{h},C^{\infty}(D,R^{r\times
h})),
\elabel{ladaptedIaii}\\
&&\bar{{\cal L}}(\cdot,x,U^{1},V^{1},\bar{V}^{1},\tilde{V}^{1})\in L^{2}_{{\cal
F}}([0,T],C^{\infty}(D,R^{q})),
\elabel{fbladaptedI}\\
&&\bar{{\cal J}}(\cdot,x,U^{1},V^{1},\bar{V}^{1},\tilde{V}^{1})\in L^{2}_{{\cal
F}}([0,T],C^{\infty}(D,R^{q\times d})),
\elabel{fbladaptedIai}\\
&&\bar{{\cal I}}(\cdot,x,U^{1},V^{1},\bar{V}^{1},\tilde{V}^{1})\in
L^{2}_{{\cal F}}([0,T]\times{\cal Z}^{h},C^{\infty}(D,R^{q\times
h})). \elabel{fbladaptedIaii}
\end{eqnarray}
By considering ${\cal L}$, ${\cal J}$, and ${\cal I}$ in
\eq{ladaptedI}-\eq{ladaptedIaii} as new starting ${\cal
L}(\cdot,x,0,0,0,0)$, ${\cal J}(\cdot,x,0,0,0,0)$, and ${\cal
I}(\cdot,x,0,0,0,0)$, we can define $U^{2}$ by the forward iteration
in \eq{sigmanV}. Furthermore, $U^{2}$ is a $\{{\cal
F}_{t}\}$-adapted c\`adl\`ag process that is square-integrable for
each $x\in D$ in the sense of \eq{maxnormo}.

Now, consider $\bar{{\cal L}}$, $\bar{{\cal J}}$, and $\bar{{\cal
I}}$ in \eq{fbladaptedI}-\eq{fbladaptedIaii} as new starting
$\bar{{\cal L}}(\cdot,x,0,0,0,0)$, $\bar{{\cal
J}}(\cdot,x,0,0,0,0)$, and $\bar{{\cal I}}(\cdot,x,0,0,0,0)$. Then,
it follows from the Martingale representation theorem (see, e.g.,
Theorem 5.3.5 in page 266 of Applebaum~\cite{app:levpro}) that there
are unique predictable processes $\bar{V}^{2}(\cdot,x)$ and
$\tilde{V}^{2}(\cdot,x,z)$ such that
\begin{eqnarray}
&&\hat{V}^{2}(t,x)
\elabel{sigmanI}\\
&\equiv& E\left[H(x) +\int_{0}^{T}\bar{{\cal
L}}(s^{-},x,U^{1},V^{1},\bar{V}^{1},\tilde{V}^{1})ds\right.
\nonumber\\
&&+\int_{0}^{T}\left(\bar{{\cal
J}}(s^{-},x,U^{1},V^{1},\bar{V}^{1},\tilde{V}^{1})+\bar{V}^{1}(s^{-},x)\right)dW(s)
\nonumber\\
&& \left.\left.+\int_{0}^{T}\int_{{\cal Z}}\left(\bar{{\cal
I}}(s^{-},x,U^{1},V^{1},\bar{V}^{1},\tilde{V}^{1},z)+\tilde{V}^{1}(s^{-},x,z)\right)
\tilde{N}(\lambda ds,dz)\right|{\cal F}_{t}\right]
\nonumber\\
&=&\hat{V}^{2}(0,x)+\int_{0}^{t}\bar{V}^{2}(s^{-},x)dW(s)
+\int_{0}^{t}\int_{{\cal
Z}}\tilde{V}^{2}(s^{-},x,z)\tilde{N}(\lambda ds,dz). \nonumber
\end{eqnarray}
Furthermore, $\bar{V}^{2}$ and $\tilde{V}^{2}$ are square-integrable
for each $x\in D$ in the sense of \eq{maxnormII}-\eq{maxnormIII},
and
\begin{eqnarray}
&&\hat{V}^{2}(0,x)
\elabel{sigmanII}\\
&=&\hat{V}^{2}(T,x)-\int_{0}^{T}\bar{V}^{2}(s^{-},x)dW(s)
-\int_{0}^{T} \int_{{\cal
Z}}\tilde{V}^{2}(s^{-},x,z)\tilde{N}(\lambda
ds,dz)\nonumber\\
&=&H(x)+\int_{0}^{T}\bar{{\cal
L}}(s^{-},x,U^{1},V^{1},\bar{V}^{1},\tilde{V}^{1})ds
\nonumber\\
&&+\int_{0}^{T}\left(\bar{{\cal
J}}(s^{-},x,U^{1},V^{1},\bar{V}^{1},\tilde{V}^{1})+\bar{V}^{1}(s^{-},x)
-\bar{V}^{2}(s^{-},x)\right)dW(s)
\nonumber\\
&&+\int_{0}^{T}\int_{{\cal Z}}\left(\bar{{\cal
I}}(s^{-},x,U^{1},V^{1},\bar{V}^{1},\tilde{V}^{1},z)+\tilde{V}^{1}(s^{-},x,z)
-\tilde{V}^{2}(s^{-},x,z)\right)\tilde{N}(\lambda ds,dz). \nonumber
\end{eqnarray}
Owing to the corollary in page 8 of Protter~\cite{pro:stoint},
$\hat{V}^{2}(\cdot,x)$ can be taken as a c\`adl\`ag process. Now,
define a process $V^{2}$ given by
\begin{eqnarray}
V^{2}(t,x)&=& E\left[H(x) +\int_{t}^{T}\bar{{\cal
L}}(s^{-},x,U^{1},V^{1},\bar{V}^{1},\tilde{V}^{1})ds\right.
\elabel{sigmanIII}\\
&&+\int_{t}^{T}\left(\bar{{\cal
J}}(s^{-},x,U^{1},V^{1},\bar{V}^{1},\tilde{V}^{1})+\bar{V}^{1}(s^{-},x)\right)dW(s)
\nonumber\\
&&\left.\left.+\int_{t}^{T}\int_{{\cal Z}}\left(\bar{{\cal
I}}(s^{-},x,U^{1},V^{1},z)+\tilde{V}^{1}(s^{-},x,z)\right)\tilde{N}(\lambda
ds,dz)\right|{\cal F}_{t}\right]. \nonumber
\end{eqnarray}
Thus, it follows from \eq{fblipschitzoI}-\eq{nlipoI} and simple
calculation that $V^{2}(\cdot,x)$ is square-integrable in the sense
of \eq{maxnormI}. In addition, by \eq{sigmanI}-\eq{sigmanIII}, we
know that
\begin{eqnarray}
V^{2}(t,x)&=&\hat{V}^{2}(t,x)-\int_{0}^{t}\bar{{\cal
L}}(s^{-},x,U^{1},V^{1},\bar{V}^{1},\tilde{V}^{1})ds
\elabel{sigmanIV}\\
&&-\int_{0}^{t}\left(\bar{{\cal
J}}(s^{-},x,U^{1},V^{1},\bar{V}^{1},\tilde{V}^{1})+\bar{V}^{1}(s^{-},x)\right)dW(s)
\nonumber\\
&&-\int_{0}^{t}\int_{{\cal Z}}\left(\bar{{\cal
I}}(s^{-},x,U^{1},V^{1},z)+\tilde{V}^{1}(s^{-},x,z)\right)\tilde{N}(\lambda
ds,dz), \nonumber
\end{eqnarray}
which implies that $V^{2}(\cdot,x)$ is a c\`adl\`ag process.

Hence, for a given quadruplet in \eq{threeu}, it follows from
\eq{sigmanI}-\eq{sigmanII} and \eq{sigmanIV} that the associated
quadruplet ($U^{2}(\cdot,x)$, $V^{2}(\cdot,x),$
$\bar{V}^{2}(\cdot,x)$, $\tilde{V}^{2}(\cdot,x,z)$) satisfies the
equation \eq{sigmanV} as stated in the lemma. Furthermore, we know
that
\begin{eqnarray}
&&V^{2}(t,x)\elabel{sigmanVI}\\
&\equiv&V^{2}(0,x)-\int_{0}^{t}\bar{{\cal
L}}(s^{-},x,U^{1},V^{1},\bar{V}^{1},\tilde{V}^{2})ds
\nonumber\\
&&-\int_{0}^{t}\left(\bar{{\cal
J}}(s^{-},x,U^{1},V^{1},\bar{V}^{1},\tilde{V}^{1})+\bar{V}^{1}(s^{-},x)
-\bar{V}^{2}(s^{-},x)\right)dW(s)
\nonumber\\
&&-\int_{0}^{t}\int_{{\cal Z}}\left(\bar{{\cal
I}}(s^{-},x,U^{1},V^{1},\bar{V}^{-},\tilde{V}^{1},z)+\tilde{V}^{1}(s^{-},x,z)
-\tilde{V}^{2}(s^{-},x,z)\right)\tilde{N}(\lambda ds,dz). \nonumber
\end{eqnarray}
Thus, we complete the proof of Lemma~\ref{martindecom}. $\Box$
\end{proof}
\begin{lemma}\label{differentiableV}
Under the conditions of Theorem~\ref{bsdeyI}, consider a quadruplet
as in \eq{threeu} for each fixed $x\in D$ and $z\in{\cal Z}^{h}$.
Define $(U(t,x), V(t,x),\bar{V}(t,x),\tilde{V}(t,x,z))$ by
\eq{sigmanV}. Then, $(U^{(c)}(\cdot,x)$, $V^{(c)}(\cdot,x)$,
$\bar{V}^{(c)}(\cdot,x)$, $\tilde{V}^{(c)}(\cdot,x,z))$ for each
$c\in\{0,1,...,\}$ exists a.s. and satisfies
\begin{eqnarray}
&&\;\;\;\;\;\;\left\{\begin{array}{ll}
U^{(c)}_{i_{1}...i_{p}}(t,x)&=G^{(c)}_{i_{1}...i_{p}}(x)
+\int_{0}^{t}{\cal L}^{(c)}_{i_{1}...i_{p}}(s^{-},x,U^{1},V^{1},\bar{V}^{1},\tilde{V}^{1})ds\\
&\;\;\;\;+\int_{0}^{t}{\cal J}^{(c)}_{i_{1}...i_{p}}(s^{-},x,U^{1},V^{1},\bar{V}^{1},\tilde{V}^{1})dW(s)\\
&\;\;\;\;+\int_{0}^{t}\int_{{\cal Z}}{\cal
I}^{(c)}_{i_{1}...i_{p}}(s^{-},x,U^{1},V^{1},\bar{V}^{1},\tilde{V}^{1},z)\tilde{N}(\lambda ds,dz),\\
V^{(c)}_{i_{1}...i_{p}}(t,x)&=H^{(c)}_{i_{1}...i_{p}}(x)
+\int_{t}^{T}\bar{{\cal L}}^{(c)}_{i_{1}...i_{p}}(s^{-},x,U^{1},V^{1},\bar{V}^{1},\tilde{V}^{1})ds\\
&\;\;\;\;+\int_{t}^{T}\left(\bar{{\cal
J}}^{(c)}_{i_{1}...i_{p}}(s^{-},x,U^{1},V^{1},\bar{V}^{1},\tilde{V}^{1})\right.\\
&\;\;\;\;\;\;\;\;\left.+\bar{V}^{1,(c)}_{i_{1}...i_{p}}(s^{-},x)-\bar{V}^{(c)}_{i_{1}...i_{p}}(s^{-},x)\right)dW(s)\\
&\;\;\;\;+\int_{t}^{T}\int_{{\cal Z}}\left(\bar{{\cal
I}}^{(c)}_{i_{1}...i_{p}}(s^{-},x,U^{1},V^{1},\bar{V}^{1},\tilde{V}^{1},z)\right.\\
&\;\;\;\;\;\;\;\;\left.+\tilde{V}^{1,(c)}_{i_{1}...i_{p}}
(s^{-},x,z)-\tilde{V}^{(c)}_{i_{1}...i_{p}}
(s^{-},x,z)\right)\tilde{N}(\lambda ds,dz),
\end{array}
\right.
\elabel{pdsigmanV}
\end{eqnarray}
where $i_{1}+...+i_{p}=c$ and $i_{l}\in\{0,1,...,c\}$ with
$l\in\{1,...,p\}$. Furthermore, $(U^{(c)}_{i_{1}...i_{p}}$,
$V^{(c)}_{i_{1}...i_{p}})$ for each $c\in\{0,1,...\}$ is a $\{{\cal
F}_{t}\}$-adapted c\`adl\`ag process and
$(\bar{V}^{(c)}_{i_{1}...i_{p}}$, $\tilde{V}^{(c)}_{i_{1}...i_{p}})$
is the associated predictable processes. All of them are
squarely-integrable in the senses of \eq{maxnormI}-\eq{maxnormIII}.
\end{lemma}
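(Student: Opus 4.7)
The plan is to argue by induction on the derivative order $c$. The base case $c=0$ is exactly \Lemma{martindecom}, so assume the conclusion holds for all multi-indices of total order at most $c$ and fix a multi-index $(i_{1},\dots,i_{p})$ with $i_{1}+\cdots+i_{p}=c+1$, which I decompose as $e_{j}$ plus a multi-index $(j_{1},\dots,j_{p})$ of order $c$ (where $e_{j}$ is the unit increment in the $j$th position). Throughout, the ability to use (at the next higher order) the generalized growth and Lipschitz bounds \eq{blipschitz}--\eq{nlipoI} is the engine of the induction, since these conditions are postulated for every $c\in\{0,1,2,\dots\}$.

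First, using \eq{fblipschitzoI}--\eq{nlipoI} at order $c+1$, together with the inductive hypothesis that derivatives of $(U^{1},V^{1},\bar{V}^{1},\tilde{V}^{1})$ up to order $k+c+1$ lie in the appropriate $L^{2}$-spaces and with the regularity assumptions \eq{flipic}--\eq{blipicADII} on the operators evaluated at zero, I would show that ${\cal L}^{(c+1)}$, ${\cal J}^{(c+1)}$, ${\cal I}^{(c+1)}$, $\bar{{\cal L}}^{(c+1)}$, $\bar{{\cal J}}^{(c+1)}$, $\bar{{\cal I}}^{(c+1)}$ evaluated at the given quadruplet sit in the corresponding $L^{2}$-spaces analogous to \eq{ladaptedI}--\eq{fbladaptedIaii}. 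This lets me define candidate processes $\hat U^{(c+1)}_{i_{1}\cdots i_{p}}$, $\hat V^{(c+1)}_{i_{1}\cdots i_{p}}$, $\hat{\bar V}^{(c+1)}_{i_{1}\cdots i_{p}}$, $\hat{\tilde V}^{(c+1)}_{i_{1}\cdots i_{p}}$ by repeating verbatim the construction of \Lemma{martindecom}: the forward candidate by direct Itô integration with initial value $G^{(c+1)}_{i_{1}\cdots i_{p}}(x)$, and the backward triple via the martingale representation theorem applied to the differentiated terminal data $H^{(c+1)}_{i_{1}\cdots i_{p}}(x)$ together with the differentiated integrand. The $L^{2}$-integrability in the senses \eq{maxnormo}--\eq{maxnormIII} of these candidates follows from the same Itô/compensated-Poisson isometries and the growth bound at order $c+1$.

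Next, I would identify the candidates with the actual partial derivatives in $x_{j}$ of $(U^{(c)}_{j_{1}\cdots j_{p}}, V^{(c)}_{j_{1}\cdots j_{p}}, \bar V^{(c)}_{j_{1}\cdots j_{p}}, \tilde V^{(c)}_{j_{1}\cdots j_{p}})$ supplied by the inductive hypothesis. For the forward component, I would introduce the finite-difference quotient $\Delta^{h}_{j} U^{(c)}(t,x)=(U^{(c)}(t,x+h e_{j})-U^{(c)}(t,x))/h$, subtract the two instances of \eq{pdsigmanV}, and estimate $\sup_{t\in[0,T]} E[\|\Delta^{h}_{j} U^{(c)}-\hat U^{(c+1)}_{i_{1}\cdots i_{p}}\|^{2}]$ by combining Itô isometry on the Brownian part, the compensated-jump isometry on the Poisson part, and Cauchy--Schwarz on the $ds$ integral; dominated convergence and \eq{blipschitz}--\eq{nlipoI} at order $c+1$ then drive this estimate to zero as $h\to 0$. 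A subsequence argument gives the a.s. pointwise derivative, which is identified with $\hat U^{(c+1)}_{i_{1}\cdots i_{p}}$ by joint continuity in $x$ of both sides (itself a consequence of the uniform $L^{2}$-bound).

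The main obstacle is the backward triple, since the martingale-representation integrands $\bar V^{(c)}$ and $\tilde V^{(c)}$ are only defined up to null sets and cannot be differentiated in $x$ pathwise. My strategy is to treat the $x$-difference of \eq{pdsigmanV} itself as a backward equation of the same type for $\Delta^{h}_{j} V^{(c)}$ with data the finite differences of the driving coefficients, apply \Lemma{martindecom} to obtain that the associated integrand pair is exactly $(\Delta^{h}_{j}\bar V^{(c)}, \Delta^{h}_{j}\tilde V^{(c)})$ by the uniqueness clause of the martingale representation, and then pass $h\to 0$ using a uniform-in-$h$ $L^{2}$ estimate. The limit integrands must then coincide with $\hat{\bar V}^{(c+1)}_{i_{1}\cdots i_{p}}$ and $\hat{\tilde V}^{(c+1)}_{i_{1}\cdots i_{p}}$, again by uniqueness of the representation, which simultaneously establishes the existence of the $x_{j}$-derivative of $V^{(c)}_{j_{1}\cdots j_{p}}$ (equal to $\hat V^{(c+1)}_{i_{1}\cdots i_{p}}$, which inherits c\`adl\`ag sample paths from the construction in \Lemma{martindecom}) and produces the differentiated equation \eq{pdsigmanV} at level $c+1$, closing the induction.
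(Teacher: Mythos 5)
Your overall skeleton matches the paper's: induct on the derivative order, define candidate processes by formally differentiating \eq{sigmanV} (using that the coefficients are evaluated at the \emph{given} quadruplet, so the forward part is an explicit stochastic integral and the backward triple comes from the martingale representation applied to the differentiated data), then show the finite-difference quotients converge to the candidates using the order-indexed conditions \eq{blipschitz}--\eq{nlipoI} together with a mean-value-theorem domination and dominated convergence. Up to that point your plan is consistent with the paper's proof of Lemma~\ref{differentiableV}, and your treatment of the backward integrands via uniqueness of the martingale representation is a reasonable $L^{2}$-level substitute for the paper's estimate of the time-integrated trace terms.

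The genuine gap is in the final step, where you claim that an estimate of the form $\sup_{t}E\bigl[\|\Delta^{h}_{j}U^{(c)}-\hat U^{(c+1)}\|^{2}\bigr]\to 0$ plus ``a subsequence argument gives the a.s. pointwise derivative.'' The lemma asserts that the partial derivatives \emph{exist almost surely}, i.e. the limit of the difference quotients as the continuum parameter $h\to 0$ exists pathwise; $L^{2}$ (or in-probability) convergence as $h\to 0$ together with a.s. convergence along one sequence $h_{n}\to 0$ does not deliver this, since nothing controls the quotients between the points of the subsequence. This is precisely the issue the paper spends most of its effort on: it applies the It\^o formula to the weighted squared error $Z_{\delta}(t,x)$ in \eq{itofunction}, bounds the martingale part via Burkholder--Davis--Gundy, and then controls $E[\mbox{esssup}_{0\leq\delta\leq\sigma}Z_{\delta}(t,x)]$ uniformly over \emph{all} small $\delta$ by means of the essential-supremum/upwards-directed-family device (Lemma 1.3 of Peskir and Shiryaev) in \eq{esssupeq}--\eq{difsucoI}, before letting $\sigma\to 0$ and invoking Fatou in \eq{difsucI}--\eq{zetazero}; it is this uniform-in-$\delta$ control that yields a.s. existence of $U^{(c)}_{i_{l}}$, $V^{(c)}_{i_{l}}$ (and then of $\bar V^{(c)}_{i_{l}}$, $\tilde V^{(c)}_{i_{l}}$ via \eq{difsucII}). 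To close your argument you would need either this esssup machinery, or an alternative uniform device (e.g. an a.s. fundamental-theorem-of-calculus representation $U^{(c)}(t,x+he_{j})-U^{(c)}(t,x)=\int_{0}^{h}\hat U^{(c+1)}(t,x+se_{j})\,ds$ obtained by a stochastic Fubini argument plus a.s. continuity of the candidate in $x$); as written, the step from $L^{2}$-convergence of difference quotients to almost-sure differentiability is missing. A minor additional remark: the membership of the derivatives of $(U^{1},V^{1},\bar V^{1},\tilde V^{1})$ of all orders in the relevant spaces is not an inductive hypothesis but is already encoded in \eq{threeu} through the $C^{\infty}$-weighted norm, which is what makes the domination step \eq{dominatedb}--\eq{amvthe} work.
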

\begin{proof}
Without loss of generality, we only consider the point $x\in D$,
which is an interior one of $D$. Otherwise, we can use the
corresponding derivative in a one-side manner to replace the one in
the following proof.

First, we show that the claim in the lemma is true for $c=1$. To do
so, for each given $t\in[0,T],x\in D,z\in{\cal Z}^{h}$, and
$(U^{1}(t,x),V^{1}(t,x),\bar{V}^{1}(t,x),\tilde{V}^{1}(t,x,z))$ as
in the lemma, let
\begin{eqnarray}
&(U^{(1)}_{i_{l}}(t,x),V^{(1)}_{i_{l}}(t,x),\bar{V}^{(1)}_{i_{l}}(t,x),
\tilde{V}^{(1)}_{i_{l}}(t,x,z)) \elabel{tripletdv}
\end{eqnarray}
be defined by \eq{sigmanV} but each ${\cal A}\in\{{\cal L},{\cal
J},{\cal I},\bar{{\cal L}},\bar{{\cal J}},\bar{{\cal I}}\}$ is
replaced by its first-order partial derivative
\begin{eqnarray}
&{\cal A}^{(1)}_{i_{l}}\in\left\{{\cal L}^{(1)}_{i_{l}}, {\cal
J}^{(1)}_{i_{l}},{\cal I}^{(1)}_{i_{l}},\bar{{\cal
L}}^{(1)}_{i_{l}},\bar{{\cal J}}^{(1)}_{i_{l}},\bar{{\cal
I}}^{(1)}_{i_{l}}\right\} \nonumber
\end{eqnarray}
with respect to $x_{l}$ for $l\in\{1,...,p\}$ if $i_{l}=1$. Then, we
can show that the quadruplet defined in \eq{tripletdv} for each $l$
is the required first-order partial derivative of
$(U,V,\bar{V},\tilde{V})$ in \eq{sigmanV} for the given
$(U^{1},V^{1},\bar{V}^{1},\tilde{V}^{1})$.

In fact, considering an interior point $x$ of $D$, we can take
sufficiently small constant $\delta$ such that $x+\delta e_{l}\in
D$, where $e_{l}$ is the unit vector whose $l$th component is one
and others are zero. Without loss of generality, we assume that
$\delta>0$. Then, for each
$f\in\{U,V,\bar{V},\tilde{V},U^{1},V^{1},\bar{V}^{1},\tilde{V}^{1}\}$
and $i_{l}=1$ with $l\in\{1,...,p\}$, we define
\begin{eqnarray}
&&f_{i_{l},\delta}(t,x)\equiv f(t,x+\delta e_{l}).
\elabel{fdeltan}
\end{eqnarray}
Furthermore, let
\begin{eqnarray}
%&&\Delta f^{(1)}_{n}(t,x)=f^{(1)}_{{n},\delta}(t,x)
%-f^{(1)}_{n}(t,x), \elabel{partialI}\\
&\Delta f^{(1)}_{i_{l},\delta}(t,x)
=\frac{f_{i_{l},\delta}(t,x)-f(t,x)}{\delta}-f^{(1)}_{i_{l}}(t,x),
\elabel{partialII}
\end{eqnarray}
and let
\begin{eqnarray}
&&\Delta{\cal
A}^{(1)}_{i_{l},\delta}(t,x,U^{1},V^{1},\bar{V}^{1},\tilde{V}^{1})
\elabel{deltasqr}\\
&=&\frac{1}{\delta}\left({\cal A}(t,x+\delta e_{l},U^{1}(t,x+\delta
e_{l}),V^{1}(t,x+\delta e_{l}),\bar{V}^{1}(t,x+\delta
e_{l}),\tilde{V}^{1}(t,x+\delta e_{l},z))\right.
\nonumber\\
&&\;\;\;\;\;\;\left.-{\cal
A}(t,x,U^{1}(s,x),V^{1}(t,x),\bar{V}^{1}(t,x),\tilde{V}^{1}(t,x,z))\right)
\nonumber\\
&&-{\cal
A}^{(1)}_{i_{l}}(t,x,U^{1}(s,x),V^{1}(t,x),\bar{V}^{1}(t,x),\tilde{V}^{1}(t,x,z))
\nonumber
\end{eqnarray}
for each ${\cal A}\in\{{\cal L},{\cal J},{\cal I},\bar{{\cal
L}},\bar{{\cal J}},\bar{{\cal I}}\}$.

Now, let Tr$(A)$ denote the trace of the matrix $A'A$ for a given
matrix $A$ and let $(\mbox{Tr}(A))_{j}$ be the $j$th term in the
summation of the trace. Furthermore, for each fixed $t\in[0,T]$,
$\delta>0$, and $\gamma>0$, define
\begin{eqnarray}
Z_{\delta}(t,x)&\equiv&\zeta(\Delta
U^{(1)}_{i_{l},\delta}(t,x)+\Delta V^{(1)}_{i_{l},\delta}(t,x))
\elabel{itofunction}\\
&=&\left(\mbox{Tr}\left(\Delta
U_{i_{l},\delta}^{(1)}(t,x)\right)+\mbox{Tr}\left(\Delta
V_{i_{l},\delta}^{(1)}(t,x)\right)\right)e^{2\gamma t}. \nonumber
\end{eqnarray}
Then, it follows from \eq{sigmanVI} and the It$\hat{o}$'s formula
(see, e.g., Theorem 1.14 and Theorem 1.16 in pages 6-9 of
$\emptyset$ksendal and Sulem~\cite{okssul:appsto}) that
\begin{eqnarray}
&&Z_{\delta}(t,x)+\int_{t}^{T} \mbox{Tr}\left(\Delta\bar{{\cal
J}}^{(1)}_{i_{l},\delta}(s^{-},x,U^{1},V^{1},\bar{V}^{1},\tilde{V}^{1})\right.
\elabel{pvitod}\\
&&\;\;\;\;\;\;\;\;\;\;\;\;\;\;\;\;\;\;\;\;\;\;\;\;\;\;
\left.+\Delta\bar{V}^{1,(1)}_{i_{l},\delta}(s^{-},x)
-\Delta\bar{V}^{(1)}_{i_{l},\delta}(s,x) \right)e^{2\gamma s}ds
\nonumber\\
&&+\sum_{j=1}^{h}\int_{t}^{T} \int_{{\cal
Z}}\left(\mbox{Tr}\left(\Delta\bar{{\cal
I}}^{(1)}_{i_{l},\delta}(s^{-},x,U^{1},V^{1},\bar{V}^{1},\tilde{V}^{1},z)\right.\right.
\nonumber\\
&&\;\;\;\;\;\;\;\;\;\;\;\;\;\;\;\;\;\;\;\;\;\;\;\;\;\;
\left.\left.+\Delta\tilde{V}^{1,(1)}_{i_{l},\delta}(s^{-},x,z_{j})
-\Delta\tilde{V}^{(1)}_{i_{l},\delta}(s^{-},x,z)\right)\right)_{j}
e^{2\gamma s}N_{j}(\lambda_{j}ds,dz_{j})
\nonumber\\
&=&2\int_{0}^{t}\left(-\gamma\mbox{Tr}\left(\Delta
U_{i_{l},\delta}^{(1)}(s,x)\right)+\left(\Delta
U_{i_{l},\delta}^{(1)}(s,x)\right)' \left(\Delta{\cal
L}^{(1)}_{i_{l},\delta}(s,x,U^{1},V^{1},\bar{V}^{1},\tilde{V}^{1})\right)\right)e^{2\gamma
s}ds
\nonumber\\
&&+2\int_{t}^{T}\left(-\gamma\mbox{Tr}\left(\Delta
V_{i_{l},\delta}^{(1)}(s,x)\right)+\left(\Delta
V_{i_{l},\delta}^{(1)}(s,x)\right)' \left(\Delta\bar{{\cal
L}}^{(1)}_{i_{l},\delta}(s,x,U^{1},V^{1},\bar{V}^{1},\tilde{V}^{1})\right)\right)e^{2\gamma
s}ds
\nonumber\\
&&-M_{\delta}(t,x)
\nonumber\\
&\leq&\left(-2\gamma+\frac{1}{\hat{\gamma}}\right)
\left(\int_{0}^{t}\mbox{Tr}\left(\Delta
U_{i_{l},\delta}^{(1)}(s,x)\right)e^{2\gamma
s}ds+\int_{t}^{T}\mbox{Tr}\left(\Delta
V_{i_{l},\delta}^{(1)}(s,x)\right)e^{2\gamma s}ds\right)
\nonumber\\
&&+\hat{\gamma}\int_{0}^{t}\left\|\Delta{\cal
L}^{(1)}_{i_{l},\delta}(s,x,U^{1},V^{1},\bar{V}^{1},\tilde{V}^{1})\right\|^{2}e^{2\gamma
s}ds
\nonumber\\
&&+\hat{\gamma}\int_{t}^{T}\left\|\Delta\bar{{\cal
L}}^{(1)}_{i_{l},\delta}(s,x,U^{1},V^{1},\bar{V}^{1},\tilde{V}^{1})\right\|^{2}e^{2\gamma
s}ds\nonumber\\
&&-M_{\delta}(t,x)
\nonumber\\
&=&\hat{\gamma}\int_{0}^{t}\left\|\Delta{\cal
L}^{(1)}_{i_{l},\delta}(s,x,U^{1},V^{1},\bar{V}^{1},\tilde{V}^{1})\right\|^{2}e^{2\gamma
s}ds
\nonumber\\
&&+\hat{\gamma}\int_{t}^{T}\left\|\Delta\bar{{\cal
L}}^{(1)}_{i_{l},\delta}(s,x,U^{1},V^{1},\bar{V}^{1},\tilde{V}^{1})\right\|^{2}e^{2\gamma
s}ds\nonumber\\
&&-M_{\delta}(t,x) \nonumber
\end{eqnarray}
if, in the last equality, we take
\begin{eqnarray}
\hat{\gamma}=\frac{1}{2\gamma}>0.
\elabel{thgamma}
\end{eqnarray}
Note that $M_{\delta}(t,x)$ in \eq{pvitod} is a martingale of the
form,
\begin{eqnarray}
&&M_{\delta}(t,x)\elabel{tmsigman}\\
&=&-2\sum_{j=1}^{d}\int_{0}^{t}\left(\Delta
U^{(1)}_{i_{l},\delta}(s^{-},x)\right)'\Delta({\cal
J}_{j})_{i_{l},\delta}^{(1)}(s^{-},x,U^{1},V^{1},\bar{V}^{1},\tilde{V}^{1})e^{2\gamma
s}dW_{j}(s)
\nonumber\\
&&-2\sum_{j=1}^{h}\int_{0}^{t} \int_{{\cal Z}}\left(\Delta
U^{(1)}_{i_{l},\delta}(s^{-},x)\right)' \Delta({\cal
I}_{j})^{(1)}_{i_{l},\delta}(s^{-},x,U^{1},V^{1},\bar{V}^{1},\tilde{V}^{1},z_{j})e^{2\gamma
s}\tilde{N}_{j}(\lambda_{j}ds,dz_{j})
\nonumber\\
&&+2\sum_{j=1}^{d}\int_{t}^{T}\left(\Delta
V^{(1)}_{i_{l},\delta}(s^{-},x)\right)'\left(\Delta(\bar{{\cal
J}}_{j})_{i_{l},\delta}^{(1)}(s^{-},x,U^{1},V^{1},\bar{V}^{1},\tilde{V}^{1})\right.
\nonumber\\
&&\;\;\;\;\;\;\;\;\;\;\;\;\;\;\;\;\;\;\;
\left.+\Delta(\bar{V}^{1}_{j})_{i_{l},\delta}^{(1)}(s^{-},x)
-\Delta(\bar{V}_{j})_{i_{l},\delta}^{(1)}(s^{-},x)\right)e^{2\gamma
s}dW_{j}(s)
\nonumber\\
&&+2\sum_{j=1}^{h}\int_{t}^{T} \int_{{\cal Z}}\left(\Delta
V^{(1)}_{i_{l},\delta}(s^{-},x)\right)'\left(\Delta(\bar{{\cal
I}}_{j})^{(1)}_{i_{l},\delta}(s^{-},x,U^{1},V^{1},\bar{V}^{1},\tilde{V}^{1},z_{j})\right.
\nonumber\\
&&\;\;\;\;\;\;\;\;\;\;\;\;\;\;\;\;\;\;\;
\left.+\Delta(\tilde{V}^{1}_{j})^{(1)}_{i_{l},\delta}(s^{-},x,z_{j})
-\Delta(\tilde{V}_{j})^{(1)}_{i_{l},\delta}(s^{-},x,z_{j})\right)
e^{2\gamma s}\tilde{N}_{j}(\lambda_{j}ds,dz_{j}). \nonumber
\end{eqnarray}
Thus, by the martingale property and \eq{pvitod}, we know that
\begin{eqnarray}
&&E\left[Z_{\delta}(t,x)+\int_{t}^{T}
\mbox{Tr}\left(\Delta\bar{{\cal
J}}^{(1)}_{i_{l},\delta}(s^{-},x,U^{1},V^{1},\bar{V}^{1},\tilde{V}^{1})\right.\right.
\elabel{epvitod}\\
&&\;\;\;\;\;\;\;\;\;\;\;\;\;\;\;\;\;\;\;\;\;\;\;\;\;\;
\left.+\Delta\bar{V}^{1,(1)}_{i_{l},\delta}(s^{-},x)
-\Delta\bar{V}^{(1)}_{i_{l},\delta}(s,x) \right)e^{2\gamma s}ds
\nonumber\\
&&+\sum_{j=1}^{h}\int_{t}^{T} \int_{{\cal
Z}}\left(\mbox{Tr}\left(\Delta\bar{{\cal
I}}^{(1)}_{i_{l},\delta}(s^{-},x,U^{1},V^{1},\bar{V}^{1},\tilde{V}^{1},z)\right.\right.
\nonumber\\
&&\;\;\;\;\;\;\;\;\;\;\;\;\;\;\;\;\;\;\;\;\;\;\;\;\;\;
\left.\left.\left.+\Delta\tilde{V}^{1,(1)}_{i_{l},\delta}(s^{-},x,z)
-\Delta\tilde{V}^{(1)}_{i_{l},\delta}(s^{-},x,z)\right)\right)_{j}
e^{2\gamma s}N_{j}(\lambda_{j}ds,dz_{j})\right]
\nonumber\\
&\leq&\hat{\gamma}E\left[\int_{0}^{t}\left\|\Delta{\cal
L}^{(1)}_{i_{l},\delta}(s,x,U^{1},V^{1},\bar{V}^{1},\tilde{V}^{1})\right\|^{2}e^{2\gamma
s}ds\right]
\nonumber\\
&&+\hat{\gamma}\left[\int_{t}^{T}\left\|\Delta\bar{{\cal
L}}^{(1)}_{i_{l},\delta}(s,x,U^{1},V^{1},\bar{V}^{1},\tilde{V}^{1})\right\|^{2}e^{2\gamma
s}ds\right]. \nonumber
\end{eqnarray}
Furthermore, by \eq{pvitod}-\eq{epvitod} and the
Burkholder-Davis-Gundy's inequality (see, e.g., Theorem 48 in page
193 of Protter~\cite{pro:stoint}), we have the following
observation,
\begin{eqnarray}
&&E\left[\sup_{0\leq t\leq T}\left|M_{\delta}(t,x)\right|\right]
\elabel{emdelta}\\
&&\leq \hat{\gamma}K_{1}E\left[\int_{0}^{t}\left\|\Delta{\cal
L}^{(1)}_{i_{l},\delta}(s,x,U^{1},V^{1},\bar{V}^{1},\tilde{V}^{1})\right\|^{2}e^{2\gamma
s}ds\right]
\nonumber\\
&&\;\;\;+\hat{\gamma}K_{1}\left[\int_{t}^{T}\left\|\Delta\bar{{\cal
L}}^{(1)}_{i_{l},\delta}(s,x,U^{1},V^{1},\bar{V}^{1},\tilde{V}^{1})\right\|^{2}e^{2\gamma
s}ds\right], \nonumber
\end{eqnarray}
where $K_{1}$ is some nonnegative constant depending only on
$K_{D,0}$, $K_{D,1}$, $T$, and $d$. Note that, the detailed
estimation procedure for the quantity on the right-hand side of
\eq{emdelta} is postponed to the same argument used for \eq{vitodII}
in the proof of Lemma~\ref{lemmathree} since more exact calculations
are required there.

Next, for each fixed $t\in[0,T]$, $x\in D$, and $\sigma>0$, consider
the random variable set $\{Z_{\delta}(t,x)$,
$\delta\in[0,\sigma]\}$. It follows from Lemma 1.3 in pages 6-7 of
Peskir and Shiryaev~\cite{pesshi:optsto} that there is a countable
subset ${\cal C}=\{\delta_{1},\delta_{2},...\}\subset[0,\sigma]$
such that
\begin{eqnarray}
&&\mbox{esssup}_{\delta\in[0,\sigma]}Z_{\delta}(t,x)=\sup_{\delta\in{\cal
C}}Z_{\delta}(t,x),\;\;\mbox{a.s.}, \elabel{esssupeq}
\end{eqnarray}
where ``esssup'' denotes the essential supremum. Furthermore, take
\begin{eqnarray}
\left\{\begin{array}{ll}
\bar{Z}_{\delta_{1}}(t,x)=Z_{\delta_{1}}(t,x),\;\;\\
\bar{Z}_{\delta_{n+1}}(t,x)=\bar{Z}_{\delta_{n}}(t,x)\vee
Z_{\delta_{n+1}}(t,x)\;\;\mbox{for}\;\; n\in\{1,2,...\},
\end{array}
\right. \elabel{maxbarzz}
\end{eqnarray}
where $\alpha\vee\beta=\max\{\alpha,\beta\}$ for any two real
numbers $\alpha$ and $\beta$. Obviously,
\begin{eqnarray}
&&\left\{\begin{array}{ll} Z_{\delta}(t,x)\leq\bar{Z}_{\delta}(t,x)
&\mbox{for each}\;\;\delta\in{\cal C}\\
\bar{Z}_{\delta_{1}}(t,x)\leq\bar{Z}_{\delta_{2}}(t,x)&\mbox{for
any}\;\; \delta_{1},\delta_{2}\in{\cal C}\;\;\mbox{satisfying}\;\;
\delta_{1}\leq\delta_{2}.
\end{array}
\right. \elabel{maxbarzzI}
\end{eqnarray}
The second inequality in \eq{maxbarzzI} implies that the set
$\left\{\bar{Z}_{\delta}(t,x),\delta\in{\cal C}\right\}$ is upwards
directed. Hence, for each $t\in[0,T]$, $x\in D$, $\sigma>0$, and the
associated sequence of $\{\delta_{n},n=1,2,...\}$, it follows from
\eq{esssupeq} that
\begin{eqnarray}
&&E\left[\mbox{esssup}_{0\leq\delta\leq\sigma}Z_{\delta}(t,x)\right]
\elabel{difsuco}\\
&\leq&E\left[\mbox{esssup}_{\delta\in{\cal C}}
\bar{Z}_{\delta}(t,x)\right]
\nonumber\\
&=&\lim_{n\rightarrow\infty}E\left[\bar{Z}_{\delta_{n}}(t,x)\right]
\nonumber\\
&=&\lim_{n\rightarrow\infty}
E\left[\max_{\delta\in\{\delta_{1},...,\delta_{n}\}}Z_{\delta}(t,x)\right].
\nonumber
\end{eqnarray}
In addition, for each fixed $n\in\{2,3,...\}$, let
\begin{eqnarray}
&&\bar{M}_{\delta_{n}}(t,x)
=M_{\delta_{n}}(t,x)I_{\{Z_{\delta_{n}}\geq\bar{Z}_{\delta_{n-1}}\}}
+M_{\delta_{n-1}}(t,x)I_{\{Z_{\delta_{n}}<\bar{Z}_{\delta_{n-1}}\}}.
\end{eqnarray}
Thus, by the induction method in terms of $n\in\{1,2,...\}$ and
\eq{pvitod}, we know that
\begin{eqnarray}
&&E\left[\max_{\delta\in\{\delta_{1},...,\delta_{n}\}}Z_{\delta}(t,x)\right]
\elabel{difsucoI}\\
&\leq&\hat{\gamma}\lim_{n\rightarrow\infty}
E\left[\int_{0}^{t}\max_{\delta\in\{\delta_{1},...,\delta_{n}\}}
\left\|\Delta{\cal
L}^{(1)}_{i_{l},\delta}(s,x,U^{1},V^{1},\bar{V}^{1},\tilde{V}^{1})\right\|^{2}e^{2\gamma
s}ds\right.
\nonumber\\
&&\left.+\int_{t}^{T}\max_{\delta\in\{\delta_{1},...,\delta_{n}\}}
\left\|\Delta\bar{{\cal
L}}^{(1)}_{i_{l},\delta}(s,x,U^{1},V^{1},\bar{V}^{1},\tilde{V}^{1})\right\|^{2}e^{2\gamma
s}ds\right]
\nonumber\\
&&-\lim_{n\rightarrow\infty}E\left[\bar{M}_{\delta_{n}}(t,x)\right]
\nonumber\\
&\leq& KE\left[\int_{0}^{t}\mbox{esssup}_{0\leq\delta\leq\sigma}
\left\|\Delta{\cal
L}^{(1)}_{i_{l},\delta}(s,x,U^{1},V^{1},\bar{V}^{1},\tilde{V}^{1})\right\|^{2}e^{2\gamma
s}ds\right.\nonumber\\
&&\;\;\;\;\;\left.+\int_{t}^{T}\mbox{esssup}_{0\leq\delta\leq\sigma}
\left\|\Delta\bar{{\cal
L}}^{(1)}_{i_{l},\delta}(s,x,U^{1},V^{1},\bar{V}^{1},\tilde{V}^{1})\right\|^{2}e^{2\gamma
s}ds\right] \nonumber\\
&&\;\;\;\;\;+\int_{0}^{T}\mbox{esssup}_{0\leq\delta\leq\sigma}\left\|\Delta{\cal
J}^{(1)}_{i_{l},\delta}(s^{-},x,U^{1},V^{1},\bar{V}^{1},\tilde{V}^{1})\right\|^{2}
e^{2\gamma s}ds
\nonumber\\
&&\;\;\;\;\;+\left.\int_{0}^{T}\sum_{i=1}^{h}\int_{{\cal Z}}
\mbox{esssup}_{0\leq\delta\leq\sigma}\left\|\Delta{\cal
I}^{(1)}_{i,i_{l},\delta}(s^{-},x,U^{1},V^{1},\bar{V}^{1},\tilde{V}^{1},z_{i})\right\|^{2}e^{2\gamma
s}\lambda_{i}\nu_{i}(dz_{i})ds\right], \nonumber
%&&\;\;\;\;\;+\int_{0}^{T}\mbox{esssup}_{0\leq\delta\leq\sigma}\left\|\Delta\bar{{\cal
%J}}^{(1)}_{i_{l},\delta}(s^{-},x,U^{1},V^{1},\bar{V}^{1},\tilde{V}^{1})\right.
%\nonumber\\
%&&\;\;\;\;\;\;\;\;\;\;\;\;
%\left.+\Delta(\bar{V}^{1}_{j})_{i_{l},\delta}^{(1)}(s^{-},x)
%-\Delta(\bar{V}_{j})_{i_{l},\delta}^{(1)}(s^{-},x) \right\|^{2}
%e^{2\gamma s}ds \nonumber
%&&\;\;\;\;\;+\int_{0}^{T}\sum_{i=1}^{h}\int_{z_{i}>0}
%\mbox{esssup}_{0\leq\delta\leq\sigma}\left\|\Delta\bar{{\cal
%I}}^{(1)}_{i,i_{l},\delta}(s^{-},x,U^{1},V^{1},\bar{V}^{1},\tilde{V}^{1},z_{i})\right.
%\nonumber\\
%&&\;\;\;\;\;\;\;\;\;\;\;\;
%\left.\left.+\Delta(\tilde{V}^{1}_{j})^{(1)}_{i_{l},\delta}(s^{-},x,z_{j})
%-\Delta(\tilde{V}_{j})^{(1)}_{i_{l},\delta}(s^{-},x,z_{j})
%\right\|^{2}e^{2\gamma s}\lambda_{i}\nu_{i}(dz_{i})ds\right],
%\nonumber
\end{eqnarray}
where $K$ is a nonnegative constant depending only on $K_{D,0}$,
$d$, $T$, and $\gamma$. Note that, in the second inequality, we have
used the fact in \eq{epvitod} and the following observation
\begin{eqnarray}
&\left|E\left[\bar{M}_{\delta_{n}}(t,x)\right]\right| \leq
E\left[\sup_{t\in[0,T]}\left\|M_{\delta_{n}}(t,x)\right\|\right]
+E\left[\sup_{t\in[0,T]}\left\|M_{\delta_{n-1}}(t,x)\right\|\right].
\elabel{expectc}
\end{eqnarray}

Now, recall the condition that
\begin{eqnarray}
&(U^{1}(\cdot,x),V^{1}(\cdot,x),\bar{V}^{1}(\cdot,x),\tilde{V}^{1}(\cdot,x,z))\in
{\cal Q}^{2}_{{\cal F}}([0,T]\times D). \nonumber
\end{eqnarray}
Then, for each $x\in D$, $z\in{\cal Z}^{h}$, any $c\in\{0,1,...\}$,
and any small number $\xi$ such that $x+\xi e_{l}\in D$, we have
that
\begin{eqnarray}
&&\left\|(U^{1,(c)}(t,x+\xi e_{l}),V^{1,(c)}(t,x+\xi
e_{l}),\bar{V}^{1,(c)}(t,x+\xi e_{l}),\tilde{V}^{1,(c)}(t,x+\xi
e_{l},z))\right\|
\elabel{dominatedb}\\
&\leq&\left\|\left(\max_{x\in
D}\left\|U^{1,(c)}(t,x)\right\|,\max_{x\in
D}\left\|V^{1,(c)}(t,x)\right\|,\max_{x\in
D}\left\|\bar{V}^{1,(c)}(t,x)\right\|,\max_{x\in
D}\left\|\tilde{V}^{1,(c)}(t,x,z)\right\|\right)\right\|. \nonumber
\end{eqnarray}
Note that the related quantities on the right-hand side of
\eq{dominatedb} are squarely integrable a.s. in term of the Lebesgue
measure and/or the L\'evy measure. Therefore,
$\tilde{V}^{1}(t,x,\cdot)$ (the integration of
$\tilde{V}^{1}(t,x,z)$ with respect to the L\'evy measure) is also
infinitely smooth in each $x\in D$ due to the Lebesgue's dominated
convergence theorem. Thus, by the mean-value theorem, there exist
some constants $\xi_{1}\in(0,\delta)$ and $\xi\in(0,\xi_{1})$, which
depend on $\delta$, such that
\begin{eqnarray}
&&\Delta{\cal
A}^{(1)}_{i_{l},\delta}(t,x,U^{1},V^{1},\bar{V}^{1},\tilde{V}^{1})
\elabel{mvthe}\\
&=&\xi_{1}{\cal A}^{(2)}_{i_{l}}(t,x+\xi e_{l},U^{1}(t,x+\xi
e_{l}),V^{1}(t,x+\xi e_{l}),\bar{V}^{1}(t,x+\xi
e_{l}),\tilde{V}^{1}(t,x+\xi e_{l},\cdot)) \nonumber
\end{eqnarray}
a.s. for each ${\cal A}\in\{{\cal L},{\cal J},\bar{{\cal L}}\}$. Due
to \eq{mvthe}, \eq{blipschitz}, and \eq{dominatedb}, the quantity on
the left-hand side of \eq{mvthe} for all $\delta$ is dominated by a
squarely-integrable random variable in terms of the product measure
$dt\times dP$.
%\begin{eqnarray}
%&&\left\|\Delta{\cal
%K}^{(1)}_{i_{l},\delta}(t,x,U^{1},V^{1},\bar{V}^{1},\tilde{V}^{1})\right\|
%\nonumber\\
%&\leq& 2K_{D,1}\left(\left\|U^{1}\right\|_{C^{k+1}(D,r)}
%+\left\|V^{1}\right\|_{C^{k+1}(D,q)}
%+\left\|\bar{V}^{1}\right\|_{C^{k+1}(D,qd)}
%+\left\|\tilde{V}^{1}\right\|_{\nu,k+1}\right). \nonumber
%\end{eqnarray}
Similarly, for ${\cal A}=\bar{{\cal J}}$ and each $z\in{\cal
Z}^{h}$, we a.s. have that
\begin{eqnarray}
&&\Delta{\cal
A}^{(1)}_{i_{l},\delta}(t,x,U^{1},V^{1},\bar{V}^{1},\tilde{V}^{1},z)
\elabel{amvthe}\\
&=&\xi_{1}{\cal A}^{(2)}_{i_{l}}(t,x+\xi e_{l},U^{1}(t,x+\xi
e_{l}),V^{1}(t,x+\xi e_{l}),\bar{V}^{1}(t,x+\xi
e_{l}),\tilde{V}^{1}(t,x+\xi e_{l},z),z). \nonumber
\end{eqnarray}
Owing to \eq{mvthe}, \eq{nlipo}, and \eq{dominatedb}, the quantity
on the left-hand side of \eq{amvthe} for all $\delta$ is dominated
by a squarely-integrable random variable in terms of the product
measure $dt\times\nu(dz)\times dP$.
%In addition, due to the condition in \eq{nlipo}, we have that
%\begin{eqnarray}
%&&\sum_{i=1}^{h}\int_{z_{i}>0}\left\|\Delta{\cal
%I}^{(1)}_{i,i_{l},\delta}(t,x,U^{1},V^{1},\bar{V}^{1},\tilde{V}^{1},z_{i})
%\right\|^{2}\lambda_{i}\nu_{i}(dz_{i})
%\nonumber\\
%&\leq& 2K_{D,1}\left(\left\|U^{1}\right\|^{2}_{C^{k+1}(D,r)}
%+\left\|V^{1}\right\|^{2}_{C^{k+1}(D,q)}
%+\left\|\bar{V}^{1}\right\|^{2}_{C^{k+1}(D,qd)}
%+\left\|\tilde{V}^{1}\right\|^{2}_{\nu,k+1}\right)
%\nonumber
%\end{eqnarray}
%for each ${\cal A}\in\{{\cal I},\bar{\cal I}\}$.
Therefore, it follows from \eq{difsuco}-\eq{difsucoI} and the
Lebesgue's dominated convergence theorem that
\begin{eqnarray}
&&\lim_{\sigma\rightarrow
0}E\left[\mbox{esssup}_{0\leq\delta\leq\sigma}Z_{\delta}(t,x)\right]
\elabel{difsucI}\\
&\leq& KE\left[\int_{0}^{t}\lim_{\sigma\rightarrow
0}\mbox{esssup}_{0\leq\delta\leq\sigma} \left\|\Delta{\cal
L}^{(1)}_{i_{l},\delta}(s,x,U^{1},V^{1},\bar{V}^{1},\tilde{V}^{1})\right\|^{2}e^{2\gamma
s}ds\right.
\nonumber\\
&&\;\;\;\;\;\;\;+\int_{t}^{T}\lim_{\sigma\rightarrow
0}\mbox{esssup}_{0\leq\delta\leq\sigma}\left\|\Delta\bar{{\cal
L}}^{(1)}_{i_{l},\delta}(s,x,U^{1},V^{1},\bar{V}^{1},\tilde{V}^{1})\right\|^{2}e^{2\gamma
s}ds
\nonumber\\
&&\;\;\;\;\;\;\;+\int_{0}^{T}\lim_{\sigma\rightarrow
0}\mbox{esssup}_{0\leq\delta\leq\sigma}\left\|\Delta{\cal
J}^{(1)}_{i_{l},\delta}(s^{-},x,U^{1},V^{1},\bar{V}^{1},\tilde{V}^{1})\right\|
e^{2\gamma s}ds
\nonumber\\
&&\;\;\;\;\;\;\;+\left.\int_{0}^{T}\sum_{i=1}^{h}\int_{{\cal Z}}
\lim_{\sigma\rightarrow
0}\mbox{esssup}_{0\leq\delta\leq\sigma}\left\|\Delta{\cal
I}^{(1)}_{i_{l},\delta}(s^{-},x,U^{1},V^{1},\bar{V}^{1},\tilde{V}^{1},z_{i})\right\|e^{2\gamma
s}\lambda_{i}\nu_{i}(dz_{i})ds\right]. \nonumber
%&&\;\;\;\;\;+\int_{0}^{T}\lim_{\sigma\rightarrow
%0}\mbox{esssup}_{0\leq\delta\leq\sigma}\left\|\Delta\bar{{\cal
%J}}^{(1)}_{i_{l},\delta}(s^{-},x,U^{1},V^{1},\bar{V}^{1},\tilde{V}^{1})\right.
%\nonumber\\
%&&\;\;\;\;\;\;\;\;\;\;\;\;
%\left.+\Delta(\bar{V}^{1}_{j})_{i_{l},\delta}^{(1)}(s^{-},x)
%-\Delta(\bar{V}_{j})_{i_{l},\delta}^{(1)}(s^{-},x) \right\|^{2}
%e^{2\gamma s}ds
%\nonumber\\
%&&\;\;\;\;\;+\int_{0}^{T}\sum_{i=1}^{h}\int_{z_{i}>0}
%\lim_{\sigma\rightarrow
%0}\mbox{esssup}_{0\leq\delta\leq\sigma}\left\|\Delta\bar{{\cal
%I}}^{(1)}_{i,i_{l},\delta}(s^{-},x,U^{1},V^{1},\bar{V}^{1},\tilde{V}^{1},z_{i})\right.
%\nonumber\\
%&&\;\;\;\;\;\;\;\;\;\;\;\;
%\left.\left.+\Delta(\tilde{V}^{1}_{j})^{(1)}_{i_{l},\delta}(s^{-},x,z_{j})
%-\Delta(\tilde{V}_{j})^{(1)}_{i_{l},\delta}(s^{-},x,z_{j})
%\right\|^{2}e^{2\gamma s}\lambda_{i}\nu_{i}(dz_{i})ds\right],
%\nonumber
\end{eqnarray}
Hence, by \eq{difsucI} and the Fatou's lemma, we know that, for any
sequence $\sigma_{n}$ satisfying $\sigma_{n}\rightarrow 0$ along
$n\in{\cal N}$, there is a subsequence ${\cal N}'\subset{\cal N}$
such that
\begin{eqnarray}
&&\mbox{esssup}_{0\leq\delta\leq\sigma_{n}}Z_{\delta}(t,x))\rightarrow
0\;\;\mbox{along}\;\;n\in{\cal N}'\;\;\mbox{a.s.} \elabel{zetazero}
\end{eqnarray}
The convergence in \eq{zetazero} implies that the first-order
derivatives of $U$ and $V$ in terms of $x_{l}$ for each
$l\in\{1,...,p\}$ exists. More exactly, they equal
$U_{i_{l}}^{(1)}(t,x)$ and $V_{i_{l}}^{(1)}(t,x)$ a.s. respectively
for each $t\in[0,T]$ and $x\in D$. Furthermore, they are $\{{\cal
F}_{t}\}$-adapted.

Now, we prove the claim for $\bar{V}$. In fact, it follows from the
proof as in \eq{difsuco}-\eq{difsucoI} that
\begin{eqnarray}
&&\lim_{\sigma\rightarrow
0}E\left[\int_{t}^{T}\mbox{esssup}_{0\leq\delta\leq\sigma}
\mbox{Tr}\left(\Delta\bar{{\cal
J}}^{(1)}_{i_{l},\delta}(s,x,U^{1},V^{1},\bar{V}^{1},\tilde{V}^{1})\right.\right.
\elabel{difsucII}\\
&&\;\;\;\;\;\;\;\;\;\;\left.\left.+\Delta(\bar{V}^{1})^{(1)}_{i_{l},\delta}(s,x)
-\Delta\bar{V}^{(1)}_{i_{l},\delta}(s,x) \right)e^{2\gamma
s}ds\right]
\nonumber
%&\leq& \hat{\gamma}E\left[\int_{0}^{t}\lim_{\sigma\rightarrow
%0}\mbox{esssup}_{0\leq\delta\leq\sigma} \left\|\Delta{\cal
%L}^{(1)}_{i_{l},\delta}(s,x,U^{1},V^{1},\bar{V}^{1},\tilde{V}^{1})\right\|^{2}e^{2\gamma
%s}ds\right.
%\nonumber\\
%&&\left.+\int_{t}^{T}\lim_{\sigma\rightarrow
%0}\mbox{esssup}_{0\leq\delta\leq\sigma} \left\|\Delta\bar{{\cal
%L}}^{(1)}_{i_{l},\delta}(s,x,U^{1},V^{1},\bar{V}^{1},\tilde{V}^{1})\right\|^{2}e^{2\gamma
%s}ds\right] . \nonumber
\end{eqnarray}
is also bounded by the quantity on the right-hand side of
\eq{difsucI}. Thus, by \eq{zetazero} and \eq{difsucII}, we know that
\begin{eqnarray}
&&\lim_{\delta\rightarrow
0}\Delta\bar{V}^{(1)}_{i_{l},\delta}(t,x)\nonumber\\
&=&\lim_{\delta\rightarrow 0}\left(\Delta\bar{{\cal
J}}^{(1)}_{i_{l},\delta}(t,x,U^{1},V^{1},\bar{V}^{1},\tilde{V}^{1})
+\Delta(\bar{V}^{1})^{(1)}_{i_{l},\delta}(t,x)\right)
\nonumber\\
&=&0,\;\;\;\mbox{a.s.} \nonumber
\end{eqnarray}
Hence, the first-order derivative of $\bar{V}$ in $x_{l}$ for each
$l\in\{1,...,p\}$ exists and equals $\bar{V}^{(1)}_{i_{l}}(t,x)$
a.s. for every $t\in[0,T]$ and $x\in D$. Furthermore, it is a
$\{{\cal F}_{t}\}$-predictable process. Similarly, we can get the
conclusion for $\tilde{V}_{i_{l}}^{(1)}(t,x,z)$ associated with each
$l$, $t$, $x$, and $z$.

Second, we suppose that $(U^{(c-1)}(t,x),V^{(c-1)}(t,x),$
$\bar{V}^{(c-1)}(t,x),$ $\tilde{V}^{(c-1)}(t,x,z))$ corresponding to
a given $(U^{1}(t,x),$ $V^{1}(t,x)$, $\bar{V}^{1}(t,x),$
$\tilde{V}^{1}(t,x,z))$ $\in{\cal Q}^{2}_{{\cal F}}([0,T]\times D)$
exists for any given $c\in\{1,2,...\}$. Then, we can show that
\begin{eqnarray}
&&\left(U^{(c)}(t,x),V^{(c)}(t,x), \bar{V}^{(c)}(t,x),
\tilde{V}^{(c)}(t,x,z)\right)
\elabel{cthderivative}
\end{eqnarray}
exists for the given $c\in\{1,2,...\}$.

In fact, consider any fixed nonnegative integer numbers
$i_{1},...,i_{p}$ satisfying $i_{1}+...+i_{p}=c-1$ for the given
$c\in\{1,2,...\}$. Take $f\in\{U,V,\bar{V}, \tilde{V}\}$,
$l\in\{1,...,p\}$, and sufficiently small $\delta>0$. Then,  let
\begin{eqnarray}
&&f_{i_{1}...(i_{l}+1)...i_{p},\delta}^{(c-1)}(t,x)\equiv
f_{i_{1}...i_{p}}^{(c-1)}(t,x+\delta e_{l})\elabel{fdeltan}
\end{eqnarray}
correspond to the $(c-1)$th-order partial derivative ${\cal
A}^{(c-1)}_{i_{1}...i_{p}}(s,x+\delta e_{l},U^{1}(s,x+\delta
e_{l}),V^{1}(s,x+\delta e_{l}))$ of ${\cal A}\in\{{\cal L},{\cal
J},{\cal I},\bar{{\cal L}},\bar{{\cal J}},\bar{{\cal I}}\}$ via
\eq{sigmanV}. Similarly, let
\begin{eqnarray}
(U^{(c)}_{i_{1}...(i_{l}+1)...i_{p}}(t,x),
V^{(c)}_{i_{1}...(i_{l}+1)...i_{p}}(t,x),
\bar{V}^{(c)}_{i_{1}...(i_{l}+1)...i_{p}}(t,x),
\tilde{V}^{(c)}_{i_{1}...(i_{l}+1)...i_{p}}(t,x,z))
\nonumber
\end{eqnarray}
be defined by \eq{sigmanV}, where ${\cal A}\in\{{\cal L},{\cal
J},{\cal I},\bar{{\cal L}},\bar{{\cal J}},\bar{{\cal I}}\}$ are
replaced by their $c$th-order partial derivatives ${\cal
A}^{(c)}_{i_{1}...(i_{l}+1)...i_{p}}$ corresponding to a given
$t,x$, $U^{1}(t,x)$, $V^{1}(t,x)$, $\bar{V}^{1}(t,x)$,
$\tilde{V}^{1}(t,x,z)$. Furthermore, let
\begin{eqnarray}
%&&\Delta f^{(1)}_{n}(t,x)=f^{(1)}_{{n},\delta}(t,x)
%-f^{(1)}_{n}(t,x), \elabel{partialI}\\
&&\Delta f^{(c)}_{i_{1}...(i_{l}+1)...i_{p},\delta}(t,x)
=\frac{f^{(c-1)}_{i_{1}...(i_{l}+1)...i_{p},\delta}(t,x)
-f^{(c-1)}_{i_{1}...i_{p}}(t,x)}{\delta}
-f^{(c)}_{i_{1}...(i_{l}+1)...i_{p}}(t,x) \elabel{ipartialII}
\end{eqnarray}
for each
$f\in\{U,V,\bar{V},\tilde{V},U^{1},V^{1},\bar{V}^{1},\tilde{V}^{1}\}$.
Then, define
\begin{eqnarray}
&&\Delta{\cal
A}^{(c)}_{i_{1}...(i_{l}+1)...i_{p},\delta}(t,x,U^{1},V^{1})
\elabel{ideltasqr}\\
&\equiv&\frac{1}{\delta}\left({\cal
A}^{(c-1)}_{i_{1}...i_{p}}(t,x+\delta e_{l},U^{1}(t,x+\delta
e_{l})-V^{1}(t,x+\delta e_{l}),\cdot)\right.
\nonumber\\
&&\left.\;\;\;\;\;-{\cal
A}^{(c-1)}_{i_{1}...i_{p}}(s,x,U^{1}(s,x),V^{1}(s,x)\cdot)\right)
\nonumber\\
&&-{\cal
A}^{(c)}_{i_{1}...(i_{l}+1)...i_{p}}(s,x,U^{1}(s,x),V^{1}(s,x)\cdot)
\nonumber
\end{eqnarray}
for each ${\cal A}\in\{{\cal L},{\cal J},{\cal I},\bar{{\cal
L}},\bar{{\cal J}},\bar{{\cal I}}\}$. Thus, by the It$\hat{o}$'s
formula and repeating the procedure as used in the first step, we
know that
\begin{eqnarray}
(U^{(c)}_{i_{1}...(i_{l}+1)...i_{p}}(t,x),
V^{(c)}_{i_{1}...(i_{l}+1)...i_{p}}(t,x),
\bar{V}^{(c)}_{i_{1}...(i_{l}+1)...i_{p}}(t,x),
\tilde{V}^{(c)}_{i_{1}...(i_{l}+1)...i_{p}}(t,x,z))
\nonumber
\end{eqnarray}
exist for the given $c\in\{1,2,...\}$ and all $l\in\{1,...,p\}$.
Therefore, the claim in \eq{cthderivative} is true.

Third, by the induction method with respect to $c\in\{1,2,...\}$ and
the continuity of all partial derivatives in terms of $x\in D$, we
know that the claims in the lemma are true. Hence, we finish the
proof of Lemma~\ref{differentiableV}. $\Box$
\end{proof}

\vskip0.4cm To state and prove the next lemma, let $D_{{\cal
F}}^{2}([0,T],C^{\infty}(D,R^{l}))$ with $l\in\{r,q\}$ be the set of
$R^{l}$-valued $\{{\cal F}_{t}\}$-adapted and squarely integrable
c\`adl\`ag processes as in \eq{adaptednormI}. Furthermore, for any
given number sequence $\gamma=\{\gamma_{c},c=0,1,2,...\}$ with
$\gamma_{c}\in R$, define ${\cal M}^{D}_{\gamma}[0,T]$ to be the
following Banach space (see, e.g., the related explanation in Yong
and Zhou~\cite{yonzho:stocon}, and Situ~\cite{sit:solbac})
\begin{eqnarray}
{\cal M}^{D}_{\gamma}[0,T]&\equiv&D^{2}_{{\cal
F}}([0,T],C^{\infty}(D,R^{r}))
\elabel{combanach}\\
&&\times D^{2}_{{\cal
F}}([0,T],C^{\infty}(D,R^{q})) \nonumber\\
&&\times L^{2}_{{\cal
F},p}([0,T],C^{\infty}(D,R^{q\times
d}))\nonumber\\
&&\times L^{2}_{p}([0,T]\times R_{+}^{h},C^{\infty}(D,R^{q\times
h})), \nonumber
\end{eqnarray}
which is endowed with the norm
\begin{eqnarray}
\;\;\;\left\|(U,V,\bar{V},\tilde{V})\right\|^{2}_{{\cal
M}^{D}_{\gamma}}&\equiv&
\sum_{c=0}^{\infty}\xi(c)\left\|(U,V,\bar{V},\tilde{V})
\right\|^{2}_{{\cal M}^{D}_{\gamma_{c},c}} \elabel{comnorm}
\end{eqnarray}
for any given $(U,V,\bar{V},\tilde{V})\in{\cal
M}^{D}_{\gamma}[0,T]$, and
\begin{eqnarray}
\;\;\;\;\;\;\;\;\left\|(U,V,\bar{V},\tilde{V})\right\|^{2}_{{\cal
M}^{D}_{\gamma_{c},c}}&=&E\left[\sup_{0\leq t\leq
T}\left\|U(t)\right\|^{2}_{C^{c}(D,q)}e^{2\gamma_{c}t}\right]
\elabel{kcomnorm}\\
&&+E\left[\sup_{0\leq t\leq
T}\left\|V(t)\right\|^{2}_{C^{c}(D,q)}e^{2\gamma_{c}t}\right]
\nonumber\\
&&+E\left[\int_{0}^{T}
\left\|\bar{V}(t)\right\|^{2}_{C^{c}(D,qd)}e^{2\gamma_{c}t}dt\right]
\nonumber\\
&&+E\left[\int_{0}^{T}\left\|\tilde{V}(t)
\right\|^{2}_{\nu,c}e^{2\gamma_{c}t}dt\right]. \nonumber
\end{eqnarray}
Then, we have the following lemma.
\begin{lemma}\label{lemmathree}
Under the conditions of Theorem~\ref{bsdeyI}, all the claims in the
theorem are true.
\end{lemma}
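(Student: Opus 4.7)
\proof
The plan is to realize the 4-tuple solution as the fixed point of the map $\Phi$ defined implicitly through Lemma~\ref{martindecom}: given $(U^{1},V^{1},\bar V^{1},\tilde V^{1})\in{\cal Q}^{2}_{{\cal F}}([0,T]\times D)$, produce $(U^{2},V^{2},\bar V^{2},\tilde V^{2})$ by the forward SDE for $U^{2}$ and by the martingale representation for $V^{2},\bar V^{2},\tilde V^{2}$ as in \eq{sigmanV}. The first step is to verify that $\Phi$ maps the Banach space ${\cal M}^{D}_{\gamma}[0,T]$ defined by \eq{combanach}--\eq{kcomnorm} into itself. Finiteness of the order-$c$ partial derivatives of the output, and the fact that these derivatives themselves satisfy the forward/backward equations \eq{pdsigmanV}, is already provided by Lemma~\ref{differentiableV}; together with \eq{maxnormo}--\eq{maxnormIII} and the growth hypotheses \eq{fblipschitzoI}--\eq{nlipoI} this gives $\Phi\bigl({\cal M}^{D}_{\gamma}[0,T]\bigr)\subset{\cal M}^{D}_{\gamma}[0,T]$.

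Next I would establish a contraction estimate on each fixed derivative level $c\in\{0,1,2,\ldots\}$. For two inputs $(U^{1},V^{1},\bar V^{1},\tilde V^{1})$ and $(\hat U^{1},\hat V^{1},\hat{\bar V}^{1},\hat{\tilde V}^{1})$ with outputs denoted by the same symbols carrying superscript $2$ and hat, let $\Delta f=f^{2}-\hat f^{2}$ and $\delta f=f^{1}-\hat f^{1}$ for each $f\in\{U,V,\bar V,\tilde V\}$. Applying It\^o's formula to $e^{2\gamma_{c} t}\bigl(\|\Delta U^{(c)}\|^{2}+\|\Delta V^{(c)}\|^{2}\bigr)$ exactly as in \eq{pvitod}--\eq{tmsigman}, taking expectation to kill the martingale, and then applying the Burkholder--Davis--Gundy inequality in the form used for \eq{emdelta} to bring the sup inside the expectation, I obtain an inequality of the schematic type
\begin{eqnarray*}
\bigl\|\Phi(U^{1},\ldots)-\Phi(\hat U^{1},\ldots)\bigr\|^{2}_{{\cal M}^{D}_{\gamma_{c},c}}
&\leq&\frac{C_{c}(K_{D,c},T,d,h)}{\gamma_{c}}\,\bigl\|(\delta U,\delta V,\delta\bar V,\delta\tilde V)\bigr\|^{2}_{{\cal M}^{D}_{\gamma_{c},c}},
\end{eqnarray*}
where the Lipschitz bounds \eq{blipschitz} and \eq{nlipo} at orders up to $k+c$ feed into the constant $C_{c}$. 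Choosing $\gamma_{c}>2C_{c}$ makes $\Phi$ a strict contraction at level $c$; then weighting by $\xi(c)$ as in \eq{comnorm} yields a strict contraction on the full space ${\cal M}^{D}_{\gamma}[0,T]$ because the sum over $c$ of $\xi(c)\cdot\tfrac{C_{c}}{\gamma_{c}}$ is easily made $<1$ by picking $\gamma_{c}$ large enough while $\xi(c)$ decays fast.

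The Banach fixed-point theorem then furnishes a unique
$(U,V,\bar V,\tilde V)\in{\cal M}^{D}_{\gamma}[0,T]$ with $\Phi(U,V,\bar V,\tilde V)=(U,V,\bar V,\tilde V)$. Substituting back into \eq{sigmanV} the $\bar V^{1}-\bar V^{2}$ and $\tilde V^{1}-\tilde V^{2}$ terms vanish, so $(U,V,\bar V,\tilde V)$ solves \eq{fbspdef}. The c\`adl\`ag property of $(U,V)(\cdot,x)$ for each $x$ is inherited from the construction in Lemma~\ref{martindecom}, and the adaptedness/predictability is automatic from the iteration. Uniqueness within ${\cal Q}^{2}_{{\cal F}}([0,T]\times D)$ follows because any two solutions are fixed points of $\Phi$ and a single application of the same contraction estimate shows their difference has zero ${\cal M}^{D}_{\gamma_{0},0}$-norm, which upgrades to identical derivatives of every order by the analogous estimate at level $c$.

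The main obstacle I expect is the simultaneous handling of (i) the forward--backward coupling, which forces me to carry sup-norms for both $U$ and $V$ while the BDG step produces extra multiplicative constants, and (ii) the unboundedness of the constants $K_{D,c}$ in $c$, which has to be absorbed by a careful joint choice of $\{\gamma_{c}\}$ and $\xi(c)$ so that the weighted sum in \eq{comnorm} remains summable. The jump terms $\tilde N$ add a further bookkeeping layer through \eq{nlipo}--\eq{nlipoI}, but since they enter the It\^o identity linearly in the intensity $\lambda_{i}\nu_{i}(dz_{i})$, they fit into the same Gronwall-with-weight scheme used for the Brownian part; essentially no new idea beyond what already appears in \eq{pvitod}--\eq{epvitod} is required. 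Theorem~\ref{infdn} then follows by applying the above argument on each $D_{n}$ with constant $K_{D_{n},c}$ and passing to the limit using \eq{unionset} and the weighted norm \eq{irbspacen}.
$\Box$
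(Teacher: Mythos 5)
There is a genuine gap, and it sits exactly at the step you call the ``contraction estimate on each fixed derivative level $c$.'' Your schematic inequality bounds $\|\Phi(U^{1},\ldots)-\Phi(\hat U^{1},\ldots)\|^{2}_{{\cal M}^{D}_{\gamma_{c},c}}$ by a constant times $\|(\delta U,\delta V,\delta\bar V,\delta\tilde V)\|^{2}_{{\cal M}^{D}_{\gamma_{c},c}}$, i.e.\ with the \emph{same} derivative level $c$ on both sides, and you say the orders up to $k+c$ merely ``feed into the constant $C_{c}$.'' They cannot: the hypotheses \eq{blipschitz} and \eq{nlipo} bound the $c$-th order derivative of $\Delta{\cal A}$ by the $C^{k+c}$-norms of the \emph{differences of the inputs}, because the coefficients are differential operators of order $k$ acting on the unknowns. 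These order-$(k+c)$ norms are norms of the unknown iterate differences, not constants, so for $k\geq 1$ there is no level-$c$-to-level-$c$ contraction, and ``choosing $\gamma_{c}>2C_{c}$'' does not repair this. This loss of $k$ derivatives per application of the map is precisely the difficulty the paper's proof is built around: it derives the shifted estimate \eq{clastine}, which controls $\|\Delta^{i}\|^{2}_{{\cal M}^{D}_{\gamma_{c},c}}$ by the norms of $\Delta^{i-1},\Delta^{i-2}$ at level $k+c$, and only after multiplying by the fast-decaying weights $\xi(c)$ in \eq{comnorm} and tuning $\gamma_{c}$ level by level does the index shift $c\mapsto k+c$ get absorbed, yielding the full-norm two-step recursion \eq{infinityine}. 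Your uniqueness argument inherits the same flaw: ``a single application of the same contraction estimate'' at level $0$ does not close, since it bounds the level-$0$ norm of the difference of two solutions by its level-$k$ norm; one must again sum over all $c$ in the weighted norm to conclude the difference vanishes.

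A secondary, smaller discrepancy: because the backward component of the iteration carries the terms $\bar V^{1}-\bar V^{2}$ and $\tilde V^{1}-\tilde V^{2}$, the paper's estimates naturally involve the two preceding iterates ($\Delta^{i-1}$ and $\Delta^{i-2}$), so the paper does not invoke the Banach fixed-point theorem for a one-step strict contraction as you do; it shows instead that the Picard iterates form a Cauchy sequence via the series bound \eq{seriescon} and then passes to the limit. Your overall architecture (iterate the map of Lemma~\ref{martindecom}, use It\^o plus Burkholder--Davis--Gundy, exploit the weighted norm) is the right one, but as written the key inequality is false for $k\geq1$, and the proof only works once it is restated in the level-shifted form \eq{clastine} with the weight/parameter bookkeeping that turns it into \eq{infinityine}.
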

\begin{proof}
By \eq{sigmanV}, we can define the following map
\begin{eqnarray}
\;\;\Xi:\;\;(U^{1}(\cdot,x),V^{1}(\cdot,x),\bar{V}^{1}(\cdot,x),
\tilde{V}^{1}(\cdot,x,z))\rightarrow(U(\cdot,x),V(\cdot,x),
\bar{V}(\cdot,x),\tilde{V}(\cdot,x,z)). \nonumber
\end{eqnarray}
Then, we show that $\Xi$ forms a contraction mapping in ${\cal
M}^{D}_{\gamma}[0,T]$. In fact, consider
\begin{eqnarray}
\;\;(U^{i}(\cdot,x),V^{i}(\cdot,x),\bar{V}^{i}(\cdot,x),
\tilde{V}^{i}(\cdot,x,z))\in{\cal M}^{D}_{\gamma}[0,T] \nonumber
\end{eqnarray}
for each $i\in\{1,2,...\}$, satisfying
\begin{eqnarray}
&&(U^{i+1}(\cdot,x),V^{i+1}(\cdot,x),\bar{V}^{i+1}(\cdot,x),
\tilde{V}^{i+1}(\cdot,x,z))\nonumber\\
&=&\Xi(U^{i}(\cdot,x),V^{i}(\cdot,x),
\bar{V}^{i}(\cdot,x),\tilde{V}^{i}(\cdot,x,z)). \nonumber
\end{eqnarray}
Furthermore, define
\begin{eqnarray}
&&\Delta f^{i}=f^{i+1}-f^{i}\;\;\;\mbox{with}\;\;\;
f\in\{U,V,\bar{V},\tilde{V}\}
\nonumber
\end{eqnarray}
and take
\begin{eqnarray}
&&\zeta(\Delta U^{i}(t,x)+\Delta
V^{i}(t,x))=\left(\mbox{Tr}\left(\Delta
U^{i}(t,x)\right)+\mbox{Tr}\left(\Delta
V^{i}(t,x)\right)\right)e^{2\gamma_{0}t}. \elabel{firstzeta}
\end{eqnarray}
Thus, it follows from \eq{blipschitz} and the similar argument as
used in proving \eq{pvitod} that, for a $\gamma_{0}>0$ and each
$i\in\{2,3,...\}$,
\begin{eqnarray}
&&\zeta(\Delta U^{i}(t,x)+\Delta
V^{i}(t,x))\elabel{eupvitod}\\
&&+\int_{t}^{T}\mbox{Tr}\left(\Delta\bar{{\cal
J}}(s,x,U^{i},V^{i},\bar{V}^{i},\tilde{V}^{i},U^{i-1},V^{i-1},\bar{V}^{i-1},\tilde{V}^{i-1})\right.
\nonumber\\
&&\;\;\;\;\;\;\;\;\;\;\;\;\;\;\;\;\;\left.+\Delta\bar{V}^{i-1}(s,x)-\Delta\bar{V}^{i}(s,x)
\right)e^{2\gamma_{0}s}ds \nonumber\\
&&+\sum_{j=1}^{h}\int_{t}^{T} \int_{{\cal
Z}}\left(\mbox{Tr}\left(\Delta\bar{{\cal
I}}(s^{-},x,U^{i},V^{i},\bar{V}^{i},\tilde{V}^{i},U^{i-1},V^{i-1},\bar{V}^{i-1},\tilde{V}^{i-1},z)
\right.\right.
\nonumber\\
&&\;\;\;\;\;\;\;\;\;\;\;\;\;\;\;\;\;
\left.\left.+\Delta\tilde{V}^{i-1}(s^{-},x,z)-\Delta\tilde{V}^{i}(s^{-},x,z)\right)\right)_{j}
e^{2\gamma_{0}s}N_{j}(\lambda_{j}ds,dz_{j})
\nonumber\\
&\leq&\hat{\gamma}_{0}\left(\int_{0}^{t}\left\|\Delta{\cal
L}(s,x,U^{i},V^{i},\bar{V}^{i},\tilde{V}^{i},U^{i-1},V^{i-1},\bar{V}^{i-1},\tilde{V}^{i-1})
\right\|^{2}e^{2\gamma_{0}s}ds\right.
\nonumber\\
&&\left.+\int_{t}^{T}\left\|\Delta\bar{{\cal
L}}(s,x,U^{i},V^{i},\bar{V}^{i},\tilde{V}^{i},U^{i-1},V^{i-1},\bar{V}^{i-1},\tilde{V}^{i-1})
\right\|^{2}e^{2\gamma_{0}s}ds\right)
\nonumber\\
&&-M^{i}(t,x)
\nonumber\\
&\leq&\hat{\gamma}_{0}K_{a,0}N^{i-1}(t)-M^{i}(t,x),
\nonumber
\end{eqnarray}
where $K_{a,0}$ is some nonnegative constant depending only on
$K_{D,0}$. For the last inequality in \eq{eupvitod}, we have taken
%\begin{eqnarray}
%&&\hat{\gamma}_{0}=\frac{3K_{D,0}^{2}}{2\gamma_{0}}>0.
%\elabel{euthgamma}
%\end{eqnarray}
\begin{eqnarray}
&&\hat{\gamma}_{0}=\frac{1}{2\gamma_{0}}>0. \elabel{euthgamma}
\end{eqnarray}
Furthermore, $N^{i-1}(t)$ appeared in \eq{eupvitod} is given by
\begin{eqnarray}
&&N^{i-1}(t)
\elabel{euntt}\\
&=&\int_{0}^{t}\left\|\Delta
U^{i-1}(s)\right\|^{2}_{C^{k}(D,r)}e^{2\gamma_{0}s}ds
\nonumber\\
&&+\int_{t}^{T}\left(\left\|\Delta
V^{i-1}(s)\right\|^{2}_{C^{k}(D,q)}+\left\|\Delta\bar{V}^{i-1}(s)
\right\|^{2}_{C^{k}(D,qd)}+\left\|\Delta\tilde{V}^{i-1}(s)
\right\|^{2}_{\nu,k}\right)e^{2\gamma_{0}s}ds.
\nonumber
\end{eqnarray}
In addition, $M^{i}(t,x)$ in \eq{eupvitod} is a martingale of the
form,
\begin{eqnarray}
&&\;\;\;M^{i}(t,x)=\elabel{eumsigman}\\
&&-2\sum_{j=1}^{d}\int_{0}^{t}\left(\Delta
U^{i}(s^{-},x)\right)'\nonumber\\
&&\;\;\;\;\;\;\;\;\;\;\;\;\;\;\;\;\Delta{\cal
J}_{j}(s^{-},x,U^{i},V^{i},\bar{V}^{i},\tilde{V}^{i},U^{i-1},V^{i-1},\bar{V}^{i-1},\tilde{V}^{i-1})
e^{2\gamma s}dW_{j}(s)
\nonumber\\
&&-2\sum_{j=1}^{h}\int_{0}^{t} \int_{{\cal Z}}\left(\Delta
U^{i}(s^{-},x)\right)'\nonumber\\
&&\;\;\;\;\;\;\;\;\;\;\;\;\;\;\;\;\Delta{\cal
I}_{j}(s^{-},x,U^{i},V^{i},\bar{V}^{i},\tilde{V}^{i},U^{i-1},V^{i-1},\bar{V}^{i-1},\tilde{V}^{i-1},z_{j})
e^{2\gamma s}\tilde{N}_{j}(\lambda_{j}ds,dz_{j})
\nonumber\\
&&+2\sum_{j=1}^{d}\int_{t}^{T}\left(\Delta
V^{i}(s^{-},x)\right)'\left(\Delta\bar{{\cal
J}}_{j}(s^{-},x,U^{i},V^{i},\bar{V}^{i},\tilde{V}^{i},U^{i-1},V^{i-1},\bar{V}^{i-1},\tilde{V}^{i-1})\right.
\nonumber\\
&&\;\;\;\;\;\;\;\;\;\;\;\;\;\;\;
\left.+(\Delta\bar{V}^{i-1})_{j}(s^{-},x)-(\Delta\bar{V}^{i})_{j}(s^{-},x)\right)
e^{2\gamma_{0}s}dW_{j}(s)
\nonumber\\
&&+2\sum_{j=1}^{h}\int_{t}^{T} \int_{{\cal Z}}\left((\Delta
V^{i})_{j}(s^{-},x)\right)'\left(\Delta\bar{{\cal
I}}_{j}(s^{-},x,U^{i},V^{i},\bar{V}^{i},\tilde{V}^{i},U^{i-1},V^{i-1},\bar{V}^{i-1},\tilde{V}^{i-1},z_{j})\right.
\nonumber\\
&&\;\;\;\;\;\;\;\;\;\;\;\;\;\;\;\left.+(\Delta\tilde{V}^{i-1})_{j}(s^{-},x,z_{j})
-(\Delta\tilde{V}^{i})_{j}(s^{-},x,z_{j})\right)
e^{2\gamma_{0}s}\tilde{N}_{j}(\lambda_{j}ds,dz_{j}). \nonumber
\end{eqnarray}
Then, it follows from \eq{eupvitod}-\eq{eumsigman} and the
martingale properties related to the It$\hat{o}$'s stochastic
integral that
\begin{eqnarray}
&&E\left[\left(\zeta(\Delta U^{i}(t,x)+\Delta
V^{i}(t,x))\right)e^{2\gamma_{0} t}\right.
\elabel{vitodI}\\
&&+\int_{t}^{T}\mbox{Tr}\left(\Delta\bar{{\cal
J}}(s,x,U^{i},V^{i},\bar{V}^{i},\tilde{V}^{i},U^{i-1},V^{i-1},\bar{V}^{i-1},\tilde{V}^{i-1})\right.
\nonumber\\
&&\;\;\;\;\;\;\;\;\;\;\left.+\Delta\bar{V}^{i-1}(s,x)-\Delta\bar{V}^{i}(s,x)
\right)e^{2\gamma_{0} s}ds
\nonumber\\
&&+\sum_{j=1}^{h}\int_{t}^{T}\int_{{\cal Z}}
\left(\mbox{Tr}\left(\Delta{\cal
I}(s,x,U^{i},V^{i},\bar{V}^{i},\tilde{V}^{i},U^{i-1},V^{i-1},\bar{V}^{i-1},\tilde{V}^{i-1},z)\right.\right.
\nonumber\\
&&\;\;\;\;\;\;\;\;\;\;
\left.\left.\left.+\Delta\tilde{V}^{i-1}(s^{-},x,z)-\Delta\tilde{V}^{i}(s^{-},x,z)
\right)\right)_{j}e^{2\gamma_{0}s}\lambda_{j}ds\nu_{j}(dz_{j})\right]
\nonumber\\
&\leq&\hat{\gamma}_{0}(T+1)K_{a,0}\left\|(\Delta
U^{i-1},V^{i-1},\Delta\bar{V}^{i-1},\Delta\tilde{V}^{i-1})
\right\|^{2}_{{\cal M}^{D}_{\gamma_{0},k}}. \nonumber
\end{eqnarray}
Next, it follows from \eq{eumsigman} that
\begin{eqnarray}
&&\;\;\;E\left[\sup_{0\leq t\leq T}\left|M^{i}(t,x)\right|\right]
\elabel{vitodII}\\
&\leq&2\sum_{j=1}^{d}E\left[\sup_{0\leq t\leq
T}\left|\int_{0}^{t}\left(\Delta
U^{i}(s^{-},x)\right)'\right.\right.\nonumber\\
&&\;\;\;\;\;\;\;\;\;\;\;\;\; \left.\left.\Delta{\cal
J}_{j}(s^{-},x,U^{i},V^{i},\bar{V}^{i},\tilde{V}^{i},U^{i-1},V^{i-1},\bar{V}^{i-1},\tilde{V}^{i-1})
e^{2\gamma_{0} s}dW_{j}(s)\right|\right]
\nonumber\\
&&+2\sum_{j=1}^{h}E\left[\sup_{0\leq t\leq T}\left|\int_{0}^{t}
\int_{{\cal Z}}\left(\Delta
U^{i}(s^{-},x)\right)'\right.\right.\nonumber\\
&&\;\;\;\;\;\;\;\;\;\;\;\;\;\;\;\left.\left.\Delta{\cal
I}_{j}(s^{-},x,U^{i},V^{i},\bar{V}^{i},\tilde{V}^{i},U^{i-1},V^{i-1},\bar{V}^{i-1},\tilde{V}^{i-1},z_{j})
e^{2\gamma_{0} s}\tilde{N}(\lambda_{j}ds,dz_{j})\right|\right]
\nonumber\\
&&+4\sum_{j=1}^{d}E\left[\sup_{0\leq t\leq
T}\left|\int_{0}^{t}\left(\Delta
V^{i}(s^{-},x)\right)'\right.\right.
\nonumber\\
&&\;\;\;\;\;\;\;\;\;\;\;\;\;\;\;\left(\Delta\bar{{\cal
J}}_{j}(s^{-},x,U^{i},V^{i},\bar{V}^{i},\tilde{V}^{i},U^{i-1},V^{i-1},\bar{V}^{i-1},\tilde{V}^{i-1})
\right.
\nonumber\\
&&\;\;\;\;\;\;\;\;\;\;\;\;\;\;\;\left.\left.\left.+(\Delta\bar{V}^{i-1})_{j}
(s^{-},x)-(\Delta\bar{V}^{i})_{j} (s^{-},x)\right)e^{2\gamma_{0}
s}dW_{j}(s)\right|\right]
\nonumber\\
&&+4\sum_{j=1}^{h}E\left[\sup_{0\leq t\leq T}\left|\int_{0}^{t}
\int_{{\cal Z}}\left(\Delta V^{i}(s^{-},x)\right)'\right.\right.
\nonumber\\
&&\;\;\;\;\;\;\;\;\;\;\;\;\;\;\;\left(\Delta\bar{{\cal
I}}_{j}(s^{-},x,U^{i},V^{i},\bar{V}^{i},\tilde{V}^{i},U^{i-1},V^{i-1},\bar{V}^{i-1},\tilde{V}^{i-1},z_{j})\right.
\nonumber\\
&&\;\;\;\;\;\;\;\;\;\;\;\;\;\;\;\left.\left.\left.+(\Delta\tilde{V}^{i-1})_{j}(s^{-},x,z_{j})
-(\Delta\tilde{V}^{i})_{j}(s^{-},x,z_{j})\right)e^{2\gamma_{0}
s}\tilde{N}(\lambda_{j}ds,dz_{j})\right|\right]. \nonumber
\end{eqnarray}
By the Burkholder-Davis-Gundy's inequality (see, e.g., Theorem 48 in
page 193 of Protter~\cite{pro:stoint}), the right-hand side of the
inequality in \eq{vitodII} is bounded by
\begin{eqnarray}
&&K_{b,0}\left(\sum_{j=1}^{d}E\left[\left(\int_{0}^{T}\left\|\Delta
U^{i}(s^{-},x)\right\|^{2}\right.\right.\right.\elabel{IvitodII}\\
&&\;\;\;\;\;\left.\left.\left\|(\Delta{\cal
J}^{i})_{j}(s^{-},x,U^{i},V^{i},\bar{V}^{i},\tilde{V}^{i},U^{i-1},V^{i-1},\bar{V}^{i-1},\tilde{V}^{i-1})\right\|^{2}e^{4\gamma_{0}
s}ds\right)^{\frac{1}{2}} \right]
\nonumber\\
&&+\sum_{j=1}^{h}E\left[\left(\int_{0}^{T} \int_{{\cal
Z}}\left\|\Delta U^{i}(s^{-},x)\right\|^{2}\right.\right.
\nonumber\\
&&\;\;\;\;\;\left.\left.\left\|\Delta{\cal
I}_{j}(s^{-},x,U^{i},V^{i},\bar{V}^{i},\tilde{V}^{i},U^{i-1},V^{i-1},\bar{V}^{i-1},\tilde{V}^{i-1},z_{j})\right\|^{2}
e^{4\gamma_{0}
s}\lambda_{j}\nu_{j}(dz_{j})ds\right)^{\frac{1}{2}}\right]
\nonumber\\
&&+\sum_{j=1}^{d}E\left[\left(\int_{0}^{T}\left\|\Delta
V^{i}(s^{-},x)\right\|^{2}\right.\right.\nonumber\\
&&\;\;\;\;\;\left\|(\Delta\bar{{\cal
J}}^{i})_{j}(s^{-},x,U^{i},V^{i},\bar{V}^{i},\tilde{V}^{i},U^{i-1},V^{i-1},\bar{V}^{i-1},\tilde{V}^{i-1})\right.
\nonumber\\
&&\;\;\;\;\;\left.\left.\left.+(\Delta\bar{V}^{i-1})_{j}
(s^{-},x)-(\Delta\bar{V}^{i})_{j}
(s^{-},x)\right\|^{2}e^{4\gamma_{0} s}ds\right)^{\frac{1}{2}}
\right]\nonumber\\
&&+\sum_{j=1}^{h}E\left[\left(\int_{0}^{T} \int_{{\cal
Z}}\left\|\Delta V^{i}(s^{-},x)\right\|^{2}\right.\right.
\nonumber\\
&&\;\;\;\;\;\left\|\Delta\bar{{\cal
I}}_{j}(s^{-},x,U^{i},V^{i},\bar{V}^{i},\tilde{V}^{i},U^{i-1},V^{i-1},\bar{V}^{i-1},\tilde{V}^{i-1},z_{j})\right.
\nonumber\\
&&\;\;\;\;\;\left.\left.\left.\left.+(\Delta\tilde{V}^{i})_{j}(s^{-},x,z_{j})
-(\Delta\tilde{V}^{i})_{j}(s^{-},x,z_{j})\right\|^{2}e^{4\gamma_{0}
s}\lambda_{j}\nu_{j}(dz_{j})ds\right)^{\frac{1}{2}}\right]\right),
\nonumber
\end{eqnarray}
where $K_{b,0}$ is some nonnegative constant depending only on
$K_{D,0}$ and $T$. Furthermore, it follows from the direct
observation that the quantity in \eq{IvitodII} is bounded by
\begin{eqnarray}
&&K_{b,0}\left(E\left[\left(\sup_{0\leq t\leq T}\|\Delta
U^{i}(t,x)\|^{2}e^{2\gamma_{0} t}\right)^{\frac{1}{2}}\right.\right.
\elabel{IIvitodII}\\
&&\left(\sum_{j=1}^{d}\left(\int_{0}^{T} \left\|\Delta{\cal
J}_{j}(s^{-},x,U^{i},V^{i},\bar{V}^{i},\tilde{V}^{i},U^{i-1},V^{i-1},\bar{V}^{i-1},\tilde{V}^{i-1})
\right\|^{2}e^{2\gamma_{0} s}ds\right)^{\frac{1}{2}}\right.
\nonumber\\
&&\;\;+\sum_{j=1}^{h}\left(\int_{0}^{T} \int_{{\cal
Z}}\left\|\Delta{\cal
I}_{j}(s^{-},x,U^{i},V^{i},\bar{V}^{i},\tilde{V}^{i},U^{i-1},V^{i-1},\bar{V}^{i-1},\tilde{V}^{i-1},z_{j})
\right\|^{2}\right.
\nonumber\\
&&\;\;\;\;\;\;\;\;\;\;\;\;\left.\left.\left.e^{2\gamma_{0}
s}\lambda_{j}\nu_{j}(dz_{j})ds\right)^{\frac{1}{2}} \right)\right]
\nonumber\\
&+&\left(E\left[\left(\sup_{0\leq t\leq T}\|\Delta
V^{i}(t,x)\|^{2}e^{2\gamma_{0} t}\right)^{\frac{1}{2}}\right.\right.
\nonumber\\
&&\left(\sum_{j=1}^{d}\left(\int_{0}^{T} \left\|\Delta\bar{{\cal
J}}_{j}(s^{-},x,U^{i},V^{i},\bar{V}^{i},\tilde{V}^{i},U^{i-1},V^{i-1},\bar{V}^{i-1},\tilde{V}^{i-1})\right.\right.\right.
\nonumber\\
&&\;\;\;\;\;\;\;\;\;
\left.\left.+(\Delta\bar{V}^{i-1})_{j}(s^{-},x)-(\Delta\bar{V}^{i})_{j}(s^{-},x)\right\|^{2}e^{2\gamma_{0}
s}ds\right)^{\frac{1}{2}}
\nonumber\\
&&\;\;+\sum_{j=1}^{h}\left(\int_{0}^{T} \int_{{\cal
Z}}\left\|\Delta\bar{{\cal
I}}_{j}(s^{-},x,U^{i},V^{i},\bar{V}^{i},\tilde{V}^{i},U^{i-1},V^{i-1},\bar{V}^{i-1},\tilde{V}^{i-1},z_{j})\right.\right.
\nonumber\\
&&\;\;\;\;\;\;\;\;\;\left.\left.\left.\left.\left.
+(\Delta\tilde{V}^{i-1})_{j}(s^{-},x,z_{j})-(\Delta\tilde{V}^{i})_{j}(s^{-},x,z_{j})\right\|^{2}
e^{2\gamma_{0}s}\lambda_{j}\nu_{j}(dz_{j})ds\right)^{\frac{1}{2}}
\right)\right]\right). \nonumber
\end{eqnarray}
In addition, by the direct computation, we know that the quantity in
\eq{IIvitodII} is dominated by
\begin{eqnarray}
&&\frac{1}{2}E\left[\sup_{0\leq t\leq T}\|\Delta
U^{i}(t,x)\|^{2}e^{2\gamma_{0} t}\right]
\elabel{IIIvitodII}\\
&&+dK_{b,0}^{2}E\left[\int_{0}^{T}\mbox{Tr}\left(\Delta{\cal
J}(s^{-},x,U^{i},V^{i},\bar{V}^{i},\tilde{V}^{i},U^{i-1},V^{i-1},\bar{V}^{i-1},\tilde{V}^{i-1}\right)
e^{2\gamma_{0} s}ds\right]
\nonumber\\
&&+K_{b,0}^{2}E\left[\sum_{j=1}^{h}\int_{0}^{T}\int_{{\cal Z}}
\mbox{Tr}\left(\Delta{\cal
I}(s^{-},x,U^{i},V^{i},\bar{V}^{i},\tilde{V}^{i},U^{i-1},V^{i-1},\bar{V}^{i-1},\tilde{V}^{i-1},z_{j})\right)_{j}
\right.
\nonumber\\
&&\;\;\;\;\;\;\;\;\;\;\;\;\;\left.e^{2\gamma_{0} s}\lambda_{j}\nu_{j}(dz_{j})ds\right]
\nonumber\\
&&+\frac{1}{2}E\left[\sup_{0\leq t\leq T}\|\Delta
V^{i}(t,x)\|^{2}e^{2\gamma_{0} t}\right]\nonumber\\
&&+dK_{b,0}^{2}E\left[\left(\int_{0}^{T}\mbox{Tr}\left(\Delta\bar{{\cal
J}}(s^{-},x,U^{i},V^{i},\bar{V}^{i},\tilde{V}^{i},U^{i-1},V^{i-1},\bar{V}^{i-1},\tilde{V}^{i-1})\right.\right.\right.
\nonumber\\
&&\;\;\;\;\;\;\;\;\;\;\;\;\;\;\;
\left.\left.\left.+\Delta\bar{V}^{i-1}(s^{-},x)-\Delta\bar{V}^{i}(s^{-},x)
\right)e^{2\gamma_{0} s}ds\right)\right]\nonumber\\
&&+K_{b,0}^{2}E\left[\sum_{j=1}^{h}\int_{0}^{T}\int_{{\cal Z}}
\mbox{Tr}\left(\Delta\bar{{\cal
I}}(s^{-},x,U^{i},V^{i},\bar{V}^{i},\tilde{V}^{i},U^{i-1},V^{i-1},\bar{V}^{i-1},\tilde{V}^{i-1},z_{j})\right.\right.
\nonumber\\
&&\;\;\;\;\;\;\;\;\;\;\;\;\;\;\;\left.\left.+\Delta\tilde{V}^{i-1}(s^{-},x,z)-\Delta\tilde{V}^{i}(s^{-},x,z_{j})\right)
e^{2\gamma_{0} s}\lambda_{j}\nu_{j}(dz_{j})ds\right].
\nonumber
\end{eqnarray}
Due to \eq{vitodI}, the quantity in \eq{IIIvitodII} is bounded by
\begin{eqnarray}
&&\frac{1}{2}\left(E\left[\sup_{0\leq t\leq T}\|\Delta
U^{i}(t)\|^{2}_{C^{0}(r)}e^{2\gamma_{0} t}\right]
+E\left[\sup_{0\leq t\leq T}\|\Delta
U^{i}(t)\|^{2}_{C^{0}(q)}e^{2\gamma_{0} t}\right]\right)
\elabel{IVvitodII}\\
&&+\hat{\gamma}_{0}(T+1)dK_{a,0}K_{b,0}^{2}\left\|(\Delta
U^{i-1},\Delta V^{i-1},\Delta\bar{V}^{i-1},\Delta\tilde{V}^{i-1})
\right\|^{2}_{{\cal M}^{D}_{\gamma_{0},k}}, \nonumber
\end{eqnarray}
where $K_{a,0}$ is some nonnegative constant depending only on $T$,
$d$, and $K_{D,0}$. Thus, it follows from \eq{blipschitz} and
\eq{eupvitod}-\eq{IVvitodII} that
\begin{eqnarray}
&&E\left[\sup_{0\leq t\leq T}\left\|\Delta
U^{i}(t)\right\|^{2}_{C^{0}(q)}e^{2\gamma_{0}
t}\right]+E\left[\sup_{0\leq t\leq T}\left\|\Delta
V^{i}(t)\right\|^{2}_{C^{0}(q)}e^{2\gamma_{0} t}\right]
\elabel{firstineu}\\
&&\leq
2\left(1+dK_{b,0}^{2}\right)K_{a,0}\hat{\gamma}_{0}(T+1)\left\|(\Delta
U^{i-1},\Delta V^{i-1},\Delta\bar{V}^{i-1},\Delta\tilde{V}^{i-1})
\right\|^{2}_{{\cal M}^{D}_{\gamma_{0},k}}. \nonumber
\end{eqnarray}
Furthermore, it follows from \eq{eupvitod} and \eq{blipschitz} that,
for $i\in\{3,4,...\}$,
\begin{eqnarray}
&&E\left[\int_{t}^{T}
\mbox{Tr}\left(\Delta\bar{V}^{i}(s,x)\right)e^{2\gamma_{0}
s}ds\right]
\elabel{secondineu}\\
&\leq& 2E\left[\int_{t}^{T}\mbox{Tr}\left(\Delta\bar{{\cal
J}}(s^{-},x,U^{i},V^{i},\bar{V}^{i},\tilde{V}^{i},U^{i-1},V^{i-1},\bar{V}^{i-1},
\tilde{V}^{i-1})\right.\right.
\nonumber\\
&&\;\;\;\;\;\;\;\;\;\;\;\;\;\;\;\;\;\;
\left.\left.+\Delta\bar{V}^{i-1}(s^{-},x)-\Delta\bar{V}^{i}(s,x)
\right)e^{2\gamma_{0} s}ds\right]\nonumber\\
&&+2E\left[\int_{t}^{T}\mbox{Tr}\left(\Delta\bar{{\cal
J}}(s^{-},x,U^{i},V^{i},\bar{V}^{i},\tilde{V}^{i},U^{i-1},V^{i-1},\bar{V}^{i-1},\tilde{V}^{i-1})
\right.\right.\nonumber\\
&&\;\;\;\;\;\;\;\;\;\;\;\;\;\;\;\;\;\;\left.\left.+\Delta\bar{V}^{i-1}(s^{-},x)\right)e^{2\gamma_{0}
s}ds\right]
\nonumber\\
&\leq&2\hat{\gamma}_{0}K_{C,0}\left(\left\|(\Delta U^{i-1},\Delta
V^{i-1},\Delta\bar{V}^{i-1},\Delta\tilde{V}^{i-1})
\right\|^{2}_{{\cal M}^{D}_{\gamma_{0},k}}\right.
\nonumber\\
&&\;\;\;\;\;\;\;\;\;\;\;\;\;\;
\left.+\left\|(\Delta U^{i-2},\Delta
V^{i-2},\Delta\bar{V}^{i-2},\Delta\tilde{V}^{i-2})
\right\|^{2}_{{\cal M}^{D}_{\gamma_{0},k}}\right), \nonumber
\end{eqnarray}
where $K_{C,0}$ is some nonnegative constant depending only on
$K_{D,0}$ and $T$. Similarly, it follows from \eq{nlipo} that
\begin{eqnarray}
&&E\left[\sum_{j=1}^{h}\int_{t}^{T}\int_{{\cal Z}}
\left(\mbox{Tr}\left(\Delta\tilde{V}^{i}(s^{-},x,z)
\right)\right)_{j}e^{2\gamma_{0}s}\lambda_{j}ds\nu_{j}(dz_{j})\right]
\elabel{thirdvitodI}\\
&\leq&2E\left[\sum_{j=1}^{h}\int_{t}^{T}\int_{{\cal Z}}
\left(\mbox{Tr}\left(\Delta\bar{{\cal
I}}(s,x,U^{i},V^{i},\bar{V}^{i},\bar{V}^{i},U^{i-1},V^{i-1},
\bar{V}^{i-1},\tilde{V}^{i-1},z)\right.\right.\right.
\nonumber\\
&&\;\;\;\;\;\;\;\;\;\;\;\;\;\;
\left.\left.\left.+\Delta\tilde{V}^{i-1}(s^{-},x,z)-\Delta\tilde{V}^{i}(s^{-},x,z)
\right)\right)_{j}e^{2\gamma_{0}s}\lambda_{j}ds\nu_{j}(dz_{j})\right]
\nonumber\\
&&+2E\left[\sum_{j=1}^{h}\int_{t}^{T}\int_{{\cal Z}}
\left(\mbox{Tr}\left(\Delta\bar{{\cal
I}}(s,x,U^{i},V^{i},\bar{V}^{i},\tilde{V}^{i},U^{i-1},V^{i-1},\bar{V}^{i-1},
\tilde{V}^{i-1},z)\right.\right.\right.
\nonumber\\
&&\;\;\;\;\;\;\;\;\;\;\;\;\;\;
\left.\left.\left.+\Delta\tilde{V}^{i-1}(s^{-},x,z)
\right)\right)_{j}e^{2\gamma_{0}s}\lambda_{j}ds\nu_{j}(dz_{j})\right]
\nonumber\\
&\leq&2\hat{\gamma}_{0}K_{C,0}\left(\left\|(\Delta U^{i-1},\Delta
V^{i-1},\Delta\bar{V}^{i-1},\Delta\tilde{V}^{i-1})
\right\|^{2}_{{\cal
M}^{D}_{\gamma_{0},k}}\right.
\nonumber\\
&&\;\;\;\;\;\;\;\;\;\;\;\;\;\; \left.+\left\|(\Delta U^{i-2},\Delta
V^{i-2},\Delta\bar{V}^{i-2},\Delta\tilde{V}^{i-2})
\right\|^{2}_{{\cal M}^{D}_{\gamma_{0},k}}\right). \nonumber
\end{eqnarray}
Thus, by \eq{eupvitod}, \eq{firstineu}-\eq{thirdvitodI}, and the
fact that all functions and norms used in this paper are continuous
in terms of $x$, we have
\begin{eqnarray}
&&\left\|(\Delta U^{i},\Delta
V^{i},\Delta\bar{V}^{i},\Delta\tilde{V}^{i}) \right\|^{2}_{{\cal
M}^{D}_{\gamma_{0},0}}
\elabel{lastine}\\
&\leq&\hat{\gamma}_{0}K_{d,0}\left(\left\|(\Delta U^{i-1},\Delta
V^{i-1},\Delta\bar{V}^{i-1},\Delta\tilde{V}^{i-1})
\right\|^{2}_{{\cal M}^{D}_{\gamma_{0},k}}\right.
\nonumber\\
&&\;\;\;\;\;\;\;\;\;\;\;\;\;\;\left.+\left\|(\Delta U^{i-2},\Delta
V^{i-2},\Delta\bar{V}^{i-2},\Delta\tilde{V}^{i-2})
\right\|^{2}_{{\cal M}^{D}_{\gamma_{0},k}}\right), \nonumber
\end{eqnarray}
where $K_{d,0}$ is some nonnegative constant depending only on
$K_{D,0}$ and $T$.

Now, by Lemma~\ref{differentiableV} and the similar construction as
in \eq{firstzeta}, for each $c\in\{1,2,...\}$, we can define
\begin{eqnarray}
&&\zeta(\Delta U^{c,i}(t,x)+\Delta
V^{c,i}(t,x))\equiv\left(\mbox{Tr}\left(\Delta
U^{c,i}(t,x)\right)+\mbox{Tr}\left(\Delta
V^{c,i}(t,x)\right)\right)e^{2\gamma_{c} t},
\elabel{secondzeta}
\end{eqnarray}
where
\begin{eqnarray}
\Delta U^{c,i}(t,x))&=&(\Delta U^{(0),i}(t,x)),\Delta
U^{(1),i}(t,x)),...,\Delta U^{(c),i}(t,x))', \nonumber\\
\Delta V^{c,i}(t,x))&=&(\Delta V^{(0),i}(t,x)),\Delta
V^{(1),i}(t,x)),...,\Delta V^{(c),i}(t,x))'. \nonumber
\end{eqnarray}
Then, it follows from the It$\hat{o}$'s formula and the similar
discussion for \eq{lastine} that
\begin{eqnarray}
&&\left\|(\Delta U^{i},\Delta
V^{i},\Delta\bar{V}^{i},\Delta\tilde{V}^{i}) \right\|^{2}_{{\cal
M}^{D}_{\gamma_{c},c}}
\elabel{clastine}\\
&\leq&\hat{\gamma}_{c}K_{d,c}\left(\left\|(\Delta U^{i-1},\Delta
V^{i-1},\Delta\bar{V}^{i-1},\Delta\tilde{V}^{i-1})
\right\|^{2}_{{\cal
M}^{D}_{\gamma_{c},k+c}}\right.\nonumber\\
&&\;\;\;\;\;\;\;\;\;\left.+\left\|(\Delta U^{i-2},\Delta
V^{i-2},\Delta\bar{V}^{i-2},\Delta\tilde{V}^{i-2})
\right\|^{2}_{{\cal M}^{D}_{\gamma_{c},k+c}}\right)
\nonumber\\
&\leq&\frac{\delta}{((c+1)^{10}(c+2)^{10}...(c+k)^{10})(\eta(c+1)
\eta(c+2)...\eta(c+k))}
\nonumber\\
&&\left(\left\|(\Delta U^{i-1},\Delta
V^{i-1},\Delta\bar{V}^{i-1},\Delta\tilde{V}^{i-1})
\right\|^{2}_{{\cal M}^{D}_{\gamma_{k+c},k+c}}\right.
\nonumber\\
&&\left.+\left\|(\Delta U^{i-2},\Delta
V^{i-2},\Delta\bar{V}^{i-2},\Delta\tilde{V}^{i-2})
\right\|^{2}_{{\cal M}^{D}_{\gamma_{k+c},k+c}}\right),\nonumber
\end{eqnarray}
where, for the last inequality of \eq{clastine}, we have taken the
number sequence $\gamma$ such that $\gamma_{0}<\gamma_{1}<...$ and
\begin{eqnarray}
&\hat{\gamma}_{c}K_{d,c}((c+1)^{10}(c+2)^{10}...(c+k)^{10})(\eta(c+1)
\eta(c+2)...\eta(c+k))\leq\delta\nonumber
\end{eqnarray}
for some $\delta>0$ such that $2\sqrt{e^{k}\delta}$ is sufficiently
small. Hence, we have
\begin{eqnarray}
&&\left\|(\Delta U^{i},\Delta
V^{i},\Delta\bar{V}^{i},\Delta\tilde{V}^{i})
\right\|^{2}_{{\cal M}^{D}_{\gamma}}
\elabel{infinityine}\\
&\leq& e^{k}\delta\left(\left\|(\Delta U^{i-1},\Delta
V^{i-1},\Delta\bar{V}^{i-1},\Delta\tilde{V}^{i-1})
\right\|^{2}_{{\cal
M}^{D}_{\gamma}}\right.
\nonumber\\
&&\;\;\;\;\;\left.+\left\|(\Delta U^{i-2},\Delta
V^{i-2},\Delta\bar{V}^{i-2},\Delta\tilde{V}^{i-2})
\right\|^{2}_{{\cal M}^{D}_{\gamma}}\right). \nonumber
\end{eqnarray}
Since $(a^{2}+b^{2})^{1/2}\leq a+b$ for $a,b\geq 0$, we have
\begin{eqnarray}
&&\left\|(\Delta U^{i},\Delta
V^{i},\Delta\bar{V}^{i},\Delta\tilde{V}^{i})
\right\|_{{\cal M}^{D}_{\gamma}}
\elabel{noninfinityine}\\
&\leq&\sqrt{e^{k}\delta}\left(\left\|(\Delta U^{i-1},\Delta
V^{i-1},\Delta\bar{V}^{i-1},\Delta\tilde{V}^{i-1})\right\|_{{\cal
M}^{D}_{\gamma}}\right.\nonumber\\
&&\;\;\;\;\;\;\;\;\left.+\left\|(\Delta U^{i-2},\Delta
V^{i-2},\Delta\bar{V}^{i-2},\Delta\tilde{V}^{i-2})\right\|_{{\cal
M}^{D}_{\gamma}}\right). \nonumber
\end{eqnarray}
Therefore,
%by managing the number sequence $\gamma>0$ suitably such
%that $2\sqrt{e^{k}\delta}$ sufficiently small and
by \eq{noninfinityine}, we know that
\begin{eqnarray}
&&\sum_{i=3}^{\infty}\left\|(\Delta U^{i},\Delta
V^{i},\Delta\bar{V}^{i},\Delta\tilde{V}^{i})\right\|_{{\cal
M}^{D}_{\gamma}} \elabel{seriescon}\\
&\leq&\frac{\sqrt{e^{k}\delta}}{1-2\sqrt{e^{k}\delta}}\left(2\left\|(\Delta
U^{2},\Delta
V^{2},\Delta\bar{V}^{2},\Delta\tilde{V}^{2})\right\|_{{\cal
M}^{D}_{\gamma}}\right.\nonumber\\
&&\;\;\;\;\;\;\;\;\;\;\;\;\;\;\;\;\;\;\;\left.+\left\|(\Delta
U^{1},\Delta
V^{1},\Delta\bar{V}^{1},\Delta\tilde{V}^{1})\right\|_{{\cal
M}^{D}_{\gamma}}\right) \nonumber\\
&<&\infty.\nonumber
\end{eqnarray}
Thus, from \eq{seriescon}, we see that
$(U^{i},V^{i},\bar{V}^{i},\tilde{V}^{i})$ with $i\in\{1,2,...\}$
forms a Cauchy sequence in ${\cal M}^{D}_{\gamma}[0,T]$, which
implies that there is some $(U,V,\bar{V},\tilde{V})$ such that
\begin{eqnarray}
&&(U^{i},V^{i},\bar{V}^{i},\tilde{V}^{i})\rightarrow
(U,V,\bar{V},\tilde{V})\;\;\mbox{as}\;\;i\rightarrow\infty\;\;
\mbox{in}\;\;{\cal M}^{D}_{\gamma}[0,T]. \elabel{finalcon}
\end{eqnarray}
Finally, by \eq{finalcon} and the similar procedure as used for
Theorem 5.2.1 in pages 68-71 of
$\emptyset$ksendal~\cite{oks:stodif}, we can complete the proof of
Lemma~\ref{lemmathree}. $\Box$
\end{proof}

\subsection{Proof of Theorem~\ref{bsdeyI}}

By combining Lemmas~\ref{martindecom}-\ref{lemmathree}, we can reach
a proof for Theorem~\ref{bsdeyI}. $\Box$

\subsection{Proof of Theorem~\ref{infdn}}

First, we consider a real-valued system corresponding to the case
that $\tau=T$, whose proof is along the line of the one for
Lemma~\ref{lemmathree}. More precisely, for any given number
sequence $\gamma=\{\gamma_{D_{c}},c=0,1,2,...\}$ with
$\gamma_{D_{c}}\in R$, replace the norm for the Banach space ${\cal
M}^{D}_{\gamma}[0,T]$ defined in \eq{combanach} by the one
\begin{eqnarray}
\;\;\;\left\|(U,V,\bar{V},\tilde{V})\right\|^{2}_{{\cal
M}^{D}_{\gamma}} &\equiv&
\sum_{c=0}^{\infty}\xi(c)\left\|(U,V,\bar{V},\tilde{V})
\right\|^{2}_{{\cal M}^{D_{c}}_{\gamma_{D_{c}},c}},
\elabel{fspaceex}
\end{eqnarray}
for any given $(U,V,\bar{V},\tilde{V})$ in this space, where
\begin{eqnarray}
\;\;\;\;\left\|(U,V,\bar{V},\tilde{V})\right\|^{2}_{{\cal
M}^{D_{c}}_{\gamma_{D_{c}}}}&=&E\left[\sup_{0\leq t\leq
T}\left\|U(t)\right\|^{2}_{C^{c}(D_{c},r)}e^{2\gamma_{D_{c}}t}\right]
\nonumber\\
&&+E\left[\sup_{0\leq t\leq
T}\left\|V(t)\right\|^{2}_{C^{c}(D_{c},q)}e^{2\gamma_{D_{c}}t}\right]
\nonumber\\
&&+E\left[\int_{0}^{T}
\left\|\bar{V}(t)\right\|^{2}_{C^{c}(D_{c},qd)}e^{2\gamma_{D_{c}}t}dt\right]
\nonumber\\
&&+E\left[\int_{0}^{T}\left\|\tilde{V}(t)
\right\|^{2}_{\nu,c}e^{2\gamma_{D_{c}}t}dt\right]. \nonumber
\end{eqnarray}
Then, it follows from the similar argument used for \eq{infinityine}
in the proof of Lemma~\ref{lemmathree} that
\begin{eqnarray}
&&(U^{1}(\cdot,x),V^{1}(\cdot,x),\bar{V}^{1}(\cdot,x),\tilde{V}^{1}(\cdot,x,z))
\in\bar{{\cal Q}}^{2}_{{\cal F}}([0,T]\times D) \nonumber
\end{eqnarray}
with $(U^{0},V^{0},\bar{V}^{0},\tilde{V}^{0})=(0,0,0,0)$, where
$(U^{1},V^{1},\bar{V}^{1},\tilde{V}^{1})$ is defined through
\eq{sigmanV} in Lemma~\ref{martindecom}. Furthermore, over each
$D_{c}$ with $c\in\{0,1,...\}$, we have that
\begin{eqnarray}
&&\left\|(\Delta U^{i},\Delta
V^{i},\Delta\bar{V}^{i},\Delta\tilde{V}^{i}) \right\|^{2}_{{\cal
M}^{D}_{\gamma}}
\elabel{uinfinityine}\\
&\leq&e^{k}\delta\left(\left\|(\Delta U^{i-1},\Delta
V^{i-1},\Delta\bar{V}^{i-1},\Delta\tilde{V}^{i-1})
\right\|^{2}_{{\cal M}^{D}_{\gamma}}\right.
\nonumber\\
&&+\left.\left\|(\Delta U^{i-2},\Delta
V^{i-2},\Delta\bar{V}^{i-2},\Delta\tilde{V}^{i-2})
\right\|^{2}_{{\cal M}^{D}_{\gamma}}\right), \nonumber
\end{eqnarray}
where $\delta$ is a constant that can be determined by suitably
choosing a number sequence $\gamma$ such that
$\gamma_{D_{0}}<\gamma_{D_{1}}<...$ and
$0<\sqrt{e^{k}\delta}/(1-2\sqrt{e^{k}\delta})<1$ (note that
$\gamma_{D_{c}}$ may depend on both $D_{c}$ and $c$ for each
$c\in\{0,1,...\}$). Thus, it follows from \eq{uinfinityine} that the
remaining justification for Theorem~\ref{infdn} can be conducted
along the line of proof for Theorem~\ref{bsdeyI}.

Second, we consider a real-valued system corresponding to the case
that $\tau$ is a general stopping time. The proof for this case can
be accomplished by extending the proof corresponding to $\tau=T$ via
the techniques developed in Dai~\cite{dai:meavar,dai:meahed} for
both forward and backward SDEs, and the related discussions in Yong
and Zhou~\cite{yonzho:stocon}.

Third, by direct generalizing the discussion concerning the
real-valued system to complex-valued system, we reach a proof for
Theorem~\ref{infdn}. $\Box$

\section{Proofs of Theorem~\ref{fbexist} and
Theorem~\ref{gtheoremo}}\label{gfbproof}

To provide the proofs for Theorem~\ref{fbexist} and
Theorem~\ref{gtheorem}, we first recall the Skorohod problem and
study some related properties.

\subsection{The Skorohod Problem}

Let $D([0,T],R^{b})$ with $b\in\{p,2p\}$ be the space of all
functions $z:[0,T]\rightarrow R^{b}$ that are right-continuous with
left limits and are endowed with Skorohod topology (see, e.g.,
Billingsley~\cite{bil:conpro}, Jacod and
Shiryaev~\cite{jacshi:limthe}). Then, we can introduce the Skorohod
problem as follows.
\begin{definite} (The Skorohod problem).
Given $z\in D([0,T],R^{p})$ with $z(0) \in D$, a $(D,R)$-regulation
of z over [0,T] is a pair $(x,y)\in D([0,T],D)\times
D([0,T],R_{+}^{b})$ such that
\begin{eqnarray}
x(t)=z(t)+Ry(t)\;\;\mbox{for all}\;\;t\in [0,T],\nonumber
\end{eqnarray}
where,  for each $i\in\{1,...,b\}$,
\begin{enumerate}
\item $y_{i}(0)=0$,
\item $y_{i}$ is nondecreasing,
\item $y_{i}$ can increase only at a time $t\in [0,T]$ with
$x(t)\in F_{i}$.
\end{enumerate}
\end{definite}
Furthermore, we define the modulus of continuity with respect to a
function $z(\cdot)\in D([0,T],R^{b})$ and a real number $\delta>0$
by
\begin{eqnarray}
&&w(z,\delta,T)\equiv\inf_{t_{l}}\max_{l}
\mbox{Osc}\left(z,[t_{l-1},t_{l})\right), \elabel{cmodulus}
\end{eqnarray}
where the infimum takes over all the finite sets $\{t_{l}\}$ of
points satisfying $0=t_{0}<t_{1}<...<t_{m}=T$ and
$t_{l}-t_{l-1}>\delta$ for $l=1,...,m$, and
\begin{eqnarray}
&&\mbox{Osc}(z,[t_{l-1},t_{l}])=\sup_{t_{1}\leq s\leq t\leq
t_{2}}\|z(t)-z(s)\|. \elabel{osclationd}
\end{eqnarray}
%with $\|\cdot\|_{2}$ denoting the Euclidean norm in $R^{b}$.
Then, we have the following lemma.
\begin{lemma}\label{osclemma}
Suppose that the reflection matrix $R$ in Definition satisfies the
completely-${\cal S}$ condition. Then, any $(D,R)$-regulation
$(x,y)$ of $z\in D([0,T],R^{p})$ with $z(0)\in D$ satisfies the
oscillation inequality over $[t_{1},t_{2}]$ with
$t_{1},t_{2}\in[0,T]$
\begin{eqnarray}
\mbox{Osc}(x,[t_{1},t_{2}])&\leq&\kappa\mbox{Osc}(z,[t_{1},t_{2}]),
\elabel{oscinq}\\
\mbox{Osc}(y,[t_{1},t_{2}])&\leq&\kappa\mbox{Osc}(z,[t_{1},t_{2}]),
\elabel{oscinqI}
\end{eqnarray}
where $\kappa$ is some nonnegative constant depending only on the
inward normal vector $N$ and the reflection matrix $R$.
\end{lemma}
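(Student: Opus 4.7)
The plan is to first reduce the two inequalities to a single estimate on $y$. Since $x(t)-x(s) = (z(t)-z(s)) + R(y(t)-y(s))$ for all $s,t\in[t_1,t_2]$, we have
\begin{eqnarray*}
\mbox{Osc}(x,[t_1,t_2]) \leq \mbox{Osc}(z,[t_1,t_2]) + \|R\|\,\mbox{Osc}(y,[t_1,t_2]),
\end{eqnarray*}
so it suffices to prove \eqref{e:oscinqI}. Because $y$ is componentwise nondecreasing, $\mbox{Osc}(y,[t_1,t_2])$ is controlled by $\|y(t_2)-y(t_1)\|$, and by the right-continuity of paths it is enough to prove the estimate on short subintervals where the ``active index set'' is essentially constant.

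Accordingly, fix a subinterval $[s,t]\subseteq[t_1,t_2]$ and let $K=\{i\in\{1,\ldots,b\}: y_i(t)-y_i(s)>0\}$ be the set of faces on which the pushing actually occurs during $[s,t]$. The completely-${\cal S}$ condition applied to the principal submatrix $\tilde R_K$ of $N'R$ indexed by $K$ yields a strictly positive vector $\xi_K$ and a constant $\alpha_K>0$ (depending only on $N,R$ and $K$) such that $\tilde R_K\xi_K\geq\alpha_K\mathbf{1}$. Now take the inner product of the identity $x(t)-x(s)=(z(t)-z(s))+R_K(y_K(t)-y_K(s))$ with each inward normal $n_i$ for $i\in K$, where $R_K$ collects the columns $v_i$ with $i\in K$; using the boundary complementarity that $y_i$ increases only at times when $x\cdot n_i=b_i$, one extracts the linear inequality
\begin{eqnarray*}
N_K'R_K\bigl(y_K(t)-y_K(s)\bigr) \leq C\,\mbox{Osc}(z,[s,t])\,\mathbf{1},
\end{eqnarray*}
where $C$ depends only on $N$. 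Taking the inner product of this vector inequality with $\xi_K$ and using $\tilde R_K\xi_K\geq\alpha_K\mathbf{1}$ together with the positivity of $\xi_K$ and of the increments $y_i(t)-y_i(s)$ for $i\in K$ gives $\|y(t)-y(s)\|\leq\kappa_K\,\mbox{Osc}(z,[s,t])$, from which \eqref{e:oscinqI} follows with $\kappa=\max_K\kappa_K$ over the finite collection of subsets of $\{1,\ldots,b\}$.

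The main obstacle will be handling the change in the active set $K$ over $[t_1,t_2]$ and, relatedly, the non-uniqueness of the $(D,R)$-regulation under the pure completely-${\cal S}$ assumption: the argument above bounds increments only when $K$ is fixed, and one must patch together finitely many such intervals without blowing up the constant. I would address this along the lines of the classical Bernard–el Kharroubi argument as adapted in Dai~\cite{dai:broapp} and Dai and Dai~\cite{daidai:heatra}, using an induction on $|K|$ combined with a localization that chooses a partition $t_1=s_0<s_1<\cdots<s_m=t_2$ where each increment $[s_{j-1},s_j]$ is attributed to a single active set $K_j$, so that $\kappa$ depends only on $N$ and $R$ and not on the regulator or on $z$.
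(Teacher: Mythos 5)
There is a genuine gap at the heart of your argument: the claimed extraction of the one-shot vector inequality $N_K'R_K\bigl(y_K(t)-y_K(s)\bigr)\leq C\,\mbox{Osc}(z,[s,t])\,\mathbf{1}$ from the boundary complementarity. What complementarity actually gives you is information at the face-hitting times: for each $i\in K$ there is some time $u_i\in[s,t]$ with $n_i\cdot x(u_i)=b_i$, and comparing with $n_i\cdot x(s)\geq b_i$ yields $n_i'R\bigl(y(u_i)-y(s)\bigr)\leq C\,\mbox{Osc}(z,[s,t])$ \emph{at the time $u_i$}, which differs from face to face. To convert these into a single inequality at the common endpoint $t$ you would need $u\mapsto n_i'R\,(y(u)-y(s))$ to be monotone, and it is not: under the completely-${\cal S}$ condition only the diagonal entries of the relevant square submatrices of $N'R$ are positive, while the off-diagonal entries can have either sign, so later increases of $y_j$, $j\neq i$, can push $n_i'R\,(y(\cdot)-y(s))$ back down. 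If the inequality you assert were available directly, the oscillation inequality would be essentially trivial and the substantial machinery of Bernard and El Kharroubi~\cite{berkha:regdet} and of Theorem 3.1 in Dai~\cite{dai:broapp} / Theorem 4.2 in Dai and Dai~\cite{daidai:heatra} (contradiction/induction arguments on the boundary structure) would be unnecessary. There is also a smaller slip in the same step: after pairing with $\xi_K$ you need a strictly positive vector for the \emph{transpose}, i.e.\ $(N_K'R_K)'\xi_K\geq\alpha_K\mathbf{1}$, not $\tilde R_K\xi_K\geq\alpha_K\mathbf{1}$; this is repairable because the completely-${\cal S}$ property is preserved under transposition, but as written the pairing does not close.

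Your proposal also diverges from the paper's proof in a way that leaves the jump behaviour untreated. Since the paths are only c\`adl\`ag, at a jump time the regulator can have an instantaneous increment $\Delta y(t)$ that no "short subinterval with constant active set" argument controls; the paper handles exactly this by observing that the instantaneous increments solve a linear complementarity problem which, under the completely-${\cal S}$ condition, is completely solvable (citing Mandelbaum~\cite{man:dyncom}), giving $\Delta y(t)\leq C\Delta z(t)$, and then invokes the cited oscillation-inequality theorems of \cite{dai:broapp,daidai:heatra} for the rest. In your write-up the continuous-part estimate is the flawed step above, the jump part is absent, and the patching over changing active sets is deferred to the same references the paper relies on — so the proposal neither supplies a self-contained alternative nor reproduces the paper's reduction. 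To repair it, either adopt the paper's route (LCP bound at jumps plus citation of the established oscillation inequality for $(D,R)$-regulations), or carry out in full the induction on the number of active faces with the compactness/contradiction argument in the style of \cite{berkha:regdet,daiwil:exiuni}, which is precisely where the constant $\kappa$ depending only on $N$ and $R$ comes from.
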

\begin{proof}
For each $t\in[t_{1},t_{2}]$, define
\begin{eqnarray}
\Delta z(t)&\equiv&z(t)-z(t^{-}),\elabel{deltaI}\\
\Delta x(t)&\equiv&x(t)-x(t^{-}),\elabel{deltaII}\\
\Delta y(t)&\equiv&y(t)-y(t^{-}).\elabel{deltaIII}
\end{eqnarray}
Since the reflection matrix $R$ satisfies the completely-${\cal S}$
condition, it is easy to check that the linear complementarity
problem (LCP)
\begin{eqnarray}
\Delta x(t)&=&\Delta z(t)+R\Delta y(t),
\nonumber\\
\Delta x(t)&\in& D,\nonumber\\
\Delta y(t)&\geq& 0,\nonumber\\
\Delta x_{i}(t)\Delta y_{i}(t)&=&0\;\;\mbox{for}\;\;i=1,...,p,
\nonumber\\
(b_{i}-\Delta x_{i}(t))\Delta
y_{i}(t)&=&0\;\;\mbox{for}\;\;i=p+1,...,b, \nonumber
\end{eqnarray}
is completely solvable (see also Theorem 2.1 in
Mandelbaum~\cite{man:dyncom} for the related discussion).
Furthermore, we can conclude that
\begin{eqnarray}
\Delta y(t)\leq C\Delta z(t)\elabel{oydelta}
\end{eqnarray}
for some nonnegative constant $C$ depending only on the inward
normal vector $N$ and the reflection matrix $R$. Then, the rest of
the proof is the direct conclusion of the one for Theorem 3.1 in
Dai~\cite{dai:broapp} or the one for Theorem 4.2 in Dai and
Dai~\cite{daidai:heatra}. $\Box$
\end{proof}
\begin{lemma}\label{complimentlemma}
Assume that $(x^{n},y^{n})\rightarrow (x,y)$ along $n\in\{1,2,...\}$
in $D([0,T],R^{p})\times D([0,T],R^{b})$ and $y^{n}(\cdot)$ is of
bounded variation for each $n\in\{1,2,...\}$. Furthermore, suppose
that
\begin{eqnarray}
&&\int_{0}^{t}f(x^{n}(s))dy^{n}(s)=0 \elabel{intconv}
\end{eqnarray}
for all $n\in\{1,2,...\}$ and each $t\in[0,T]$, where $f\in
C^{b}([0,T],R^{b})$ is a $b$-dimensional bounded vector function.
Then, for each $t\in[0,T]$, we have that
\begin{eqnarray}
&&\int_{0}^{t}f(x(s))dy(s)=0. \elabel{intconvI}
\end{eqnarray}
\end{lemma}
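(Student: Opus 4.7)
The plan is to pass to the limit in Riemann--Stieltjes approximations of both integrals. A preliminary step is to establish (or note) a uniform total-variation bound for $y^n$. In the intended application within \Section{gfbproof} (solutions to the Skorohod problem), $y^n$ is componentwise nondecreasing and by \Lemma{osclemma} has total variation dominated by $\kappa\,\mathrm{Osc}(z^n, [0, T])$ where $z^n = x^n - R y^n$; the Skorohod convergence of $(x^n, y^n)$ then forces $\{\|y^n\|_{TV}\}$ to be bounded. Helly's selection theorem combined with the assumed Skorohod convergence then ensures $y$ is itself c\`adl\`ag and of bounded variation.

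Next, for fixed $t \in [0, T]$ I would choose a partition $0 = t_0 < t_1 < \cdots < t_m = t$ whose interior vertices lie in the (dense) set of common continuity points of $x, y$ and of every $x^n, y^n$; this set has full Lebesgue measure since each c\`adl\`ag function has only countably many jumps. Skorohod convergence plus continuity of $x$ and $y$ at these partition points yields pointwise convergence $x^n(t_i) \to x(t_i)$ and $y^n(t_i) \to y(t_i)$, so by continuity and boundedness of $f$,
\begin{eqnarray}
\sum_{i=1}^{m} f(x^n(t_{i-1}))\bigl(y^n(t_i) - y^n(t_{i-1})\bigr)
\;\longrightarrow\;
\sum_{i=1}^{m} f(x(t_{i-1}))\bigl(y(t_i) - y(t_{i-1})\bigr)
\nonumber
\end{eqnarray}
as $n \to \infty$ with the partition fixed.

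Then I would control the error between each Riemann--Stieltjes sum and its Lebesgue--Stieltjes integral by $\|f\|_\infty \cdot w(y^n, \delta, T)$, which via \Lemma{osclemma} is dominated by $\kappa \|f\|_\infty w(z^n, \delta, T)$. Since $z^n \to z$ in Skorohod topology and the partition avoids the jump set of $z$, this quantity tends to zero uniformly in $n$ as the mesh $\delta \to 0$. Combining the previous convergence of sums with these uniform approximations and with the hypothesis $\int_0^t f(x^n(s)) dy^n(s) = 0$, one obtains $\int_0^t f(x(s)) dy(s) = 0$ for every $t \in [0, T]$.

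The main obstacle is the possibility of common jumps between $x$ and $y$, since Skorohod convergence does not by itself give pointwise convergence at discontinuity points. This is circumvented by confining the partition vertices to a dense set of common continuity points while invoking the uniform oscillation estimate from \Lemma{osclemma} to preserve the approximation quality in $n$. A secondary subtlety, namely the necessity of a \emph{uniform} BV bound on $y^n$ (pointwise BV is not enough), is also resolved through \Lemma{osclemma} in the Skorohod-problem setting where this lemma is applied.
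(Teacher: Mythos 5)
Your route is genuinely different from the paper's, which does not use Riemann--Stieltjes sums at all: the paper invokes the time-change characterization of Skorohod convergence to produce continuous, strictly increasing $\gamma_{n}$ with $\sup_{t}|\gamma_{n}(t)-t|\rightarrow 0$ and $(x^{n},y^{n})\circ\gamma_{n}\rightarrow(x,y)$ uniformly, and then changes variables, $\int_{0}^{t}f(x^{n}(\gamma_{n}(s)))dy^{n}(\gamma_{n}(s))=\int_{0}^{\gamma_{n}^{-1}(t)}f(x^{n}(u))dy^{n}(u)=0$, so the hypothesis \eq{intconv} is transferred directly to the limit without ever approximating the limiting integral. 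Your scheme could in principle be made to work, but as written it has a concrete defect at its central step: the discrepancy between $\sum_{i}f(x^{n}(t_{i-1}))(y^{n}(t_{i})-y^{n}(t_{i-1}))$ and $\int_{0}^{t}f(x^{n}(s))dy^{n}(s)$ is controlled by $\sum_{i}\mbox{Osc}(f\circ x^{n},(t_{i-1},t_{i}])\cdot\mbox{Var}(y^{n};(t_{i-1},t_{i}])$, i.e.\ by the oscillation of the \emph{integrand} weighted by the \emph{variation} of the integrator, not by $\|f\|_{\infty}\,w(y^{n},\delta,T)$. Already for $y^{n}(s)=s$ (so $w(y^{n},\delta,T)\leq\delta$) and an integrand oscillating on a scale finer than the mesh, the Riemann sum can miss the integral by an amount of order one, so your bound is not valid. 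With the correct estimate you need simultaneously a uniform total-variation bound on $y^{n}$ (which you obtain only by importing the Skorohod-problem structure, monotonicity and Lemma~\ref{osclemma}, none of which are hypotheses of the lemma) and uniform smallness of $\mbox{Osc}(f\circ x^{n})$ over the partition cells; the latter fails in exactly the situation you set out to handle, since a cell containing a fixed jump of $x$ has oscillation bounded below no matter how fine the mesh is.

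Second, even after repairing the estimate, left-endpoint sums converge, as the mesh shrinks, to $\int_{0}^{t}f(x(s^{-}))dy(s)$ rather than to $\int_{0}^{t}f(x(s))dy(s)$ whenever $x$ and $y$ jump at a common time; in the application that motivates this lemma the regulator jumps precisely when the reflected path jumps onto a boundary face, so the two integrals genuinely differ there and proving the vanishing of the left-limit version does not yield \eq{intconvI}. Fixing this requires right-endpoint tags together with a separate treatment of the finitely many large common jumps (using convergence of jump times and sizes under joint Skorohod convergence), a nontrivial layer of argument absent from your sketch. The paper's change-of-variables proof avoids both issues, since the hypothesis \eq{intconv} is exploited as an identity in $t$ and no approximation of the limiting Stieltjes integral is needed; if you want to keep the Riemann-sum strategy, you must replace the oscillation bound on $y^{n}$ by an oscillation-of-$f\circ x^{n}$ times variation-of-$y^{n}$ bound, justify a uniform variation bound from the stated hypotheses (or add it as an assumption), and handle common jumps explicitly.
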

\begin{proof}
It follows from the definition in pages 123-124 of
Billingsley~\cite{bil:conpro} or Theorem 1.14 in page 328 of Jacod
and Shiryaev~\cite{jacshi:limthe} that there is a sequence
$\{\gamma_{n},n\in\{1,2,...\}\}$ of continuous and strictly
increasing functions mapping from $[0,T]\rightarrow[0,T]$ with
$\gamma_{n}(0)=0$ and $\gamma_{n}(T)=T$ such that
\begin{eqnarray}
&&\sup_{t\in[0,T]}\left|\gamma_{n}(t)-t\right|\rightarrow 0,
\elabel{changecm}\\
&&\sup_{t\in[0,T]}\left|(x^{n},y^{n})(\gamma_{n}(t))-(x,y)(t)\right|
\rightarrow0. \elabel{changecmI}
\end{eqnarray}
Then, by the uniform convergence in \eq{changecm}-\eq{changecmI} and
the condition in \eq{intconv}, we know that
\begin{eqnarray}
\int_{0}^{t}f(x(s))dy(s)
&=&\lim_{n\rightarrow\infty}\int_{0}^{t}f(x^{n}(\gamma_{n}(s)))dy^{n}(\gamma_{n}(s))
\nonumber\\
&=&\lim_{n\rightarrow\infty}\int_{0}^{\gamma^{-1}_{n}(t)}f(x^{n}(u))dy^{n}(u)
\nonumber\\
&=&0, \nonumber
\end{eqnarray}
where $\gamma_{n}^{-1}(\cdot)$ is the inverse function of
$\gamma_{n}(\cdot)$ for each $n\in\{1,2,...\}$. Hence, we complete
the proof of Lemma~\ref{complimentlemma}. $\Box$
\end{proof}

\subsection{Proof of Theorem~\ref{fbexist}}\label{fbtproof}

We divide the proof of the theorem into four parts: Part A
(Existence, Uniqueness), Part B, Part C, and Part D, which
correspond to different boundary reflection conditions.

\vskip 0.3cm
{\bf Part A (Existence).} We consider the case that
$L(t,\omega)$ appeared in \eq{fbconI}-\eq{fbconII} is a constant and
both of the forward and the backward SDEs have reflection
boundaries. In this case, we need to prove the claim that there is
an adapted weak solution $((X,Y),(V,\bar{V},\tilde{V},F))$ to the
system in \eq{bsdehjb}.

In fact, for a positive integer $b$, let $D^{2}_{{\cal
F}}([0,T],R^{b})$ be the space of $R^{b}$-valued and $\{{\cal
F}_{t}\}$-adapted processes with sample paths in $D([0,T],R^{b})$.
Furthermore, each $Y\in D^{2}_{{\cal F}}([0,T],R^{b})$ is
square-integrable in the sense that
\begin{eqnarray}
&&E\left[\int_{0}^{T}\|Y(t)\|^{2}dt\right]<\infty.
\elabel{adaptednormIy}
\end{eqnarray}
In addition, we use $D^{2}_{{\cal F},p}([0,T],R^{b})$ to denote the
corresponding predictable space. Then, for a given $n\in\{1,2,...\}$
and a 4-tuple
\begin{eqnarray}
(X^{n},V^{n},\bar{V}^{n},\tilde{V}^{n})&\in& D^{2}_{{\cal F}}([0,T],
R^{p})\times D^{2}_{{\cal F}}([0,T], R^{q})\times D^{2}_{{\cal
F},p}([0,T], R^{q\times d}) \elabel{initialp}\\
&&\times D^{2}_{{\cal F},p}([0,T]\times R^{h}_{+}, R^{q\times h})
\nonumber
\end{eqnarray}
with $X^{n}(0)\in D$ and $V^{n}(T)\in\bar{D}$, we have the following
observation.

By the study concerning the continuous dynamic complementarity
problem (DCP) in Bernard and El Kharroubi~\cite{berkha:regdet} (see
also the related discussions in Mandelbaum~\cite{man:dyncom}, Reiman
and Williams~\cite{reiwil:boupro}), Theorem~\ref{bsdeyI} (and its
proof) in the current paper, there is a 6-tuple
\begin{eqnarray}
&&((X^{n+1},Y^{n+1}),(V^{n+1},\bar{V}^{n+1},\tilde{V}^{n+1},F^{n+1}))
\nonumber\\
&&\in D^{2}_{{\cal F}}([0,T], R^{p})\times D^{2}_{{\cal F}}([0,T],
R^{b})\nonumber\\
&&\;\;\;\;\times D^{2}_{{\cal F}}([0,T], R^{q})\times D^{2}_{{\cal
F},p}([0,T], R^{q\times d}) \nonumber\\
&&\;\;\;\;\times D^{2}_{{\cal F},p}([0,T]\times{\cal Z}^{h},
R^{q\times h})\times D^{2}_{{\cal F}}([0,T], R^{q\times\bar{b}})
\nonumber
\end{eqnarray}
for each $n\in\{1,2,...\}$, satisfying the properties along each
sample path:
\begin{eqnarray}
X^{n+1}(t)&=&X(0)+Z^{n}(t)+RY^{n+1}(t)\in D, \elabel{bsdehjbIp}
\end{eqnarray}
with
\begin{eqnarray}
Z^{n}(t)&=&Z_{1}^{n}(t)+Z_{2}^{n}(t), \nonumber\\
Z_{1}^{n}(t)&=&\int_{0}^{t}b(s^{-},X^{n}(s^{-}),V^{n}(s^{-}),
\bar{V}^{n}(s^{-}),\tilde{V}^{n}(s^{-},\cdot),u(s^{-},X^{n}(s^{-}),\cdot)ds
\nonumber\\
Z_{2}^{n}(t)&=&\int_{0}^{t}\sigma(s^{-},X^{n}(s^{-}),V^{n}(s^{-}),
\bar{V}^{n}(s^{-}),\tilde{V}^{n}(s^{-},\cdot),u(t,X^{n}(s^{-})),z,\cdot)dW(s)
\nonumber\\
&& +\int_{0}^{t}\int_{{\cal
Z}^{h}}\eta(s^{-},X^{n}(s^{-}),V^{n}(s^{-}),
\bar{V}^{n}(s^{-}),\tilde{V}^{n}(s^{-},\cdot),u(s^{-},X^{n}(s^{-})),z,\cdot)
\tilde{N}(ds,dz); \nonumber
\end{eqnarray}
and
\begin{eqnarray}
V^{n+1}(t)&=&H(X^{n}(T))-SF^{n}(T)+U^{n}(t)+SF^{n+1}(t)\in\bar{D},
\elabel{vexpress}
\end{eqnarray}
with
\begin{eqnarray}
U^{n}(t)&=&U^{n}_{1}(t)-U^{n}_{2}(t)-U^{n}_{3}(t),\nonumber
\end{eqnarray}
where,
\begin{eqnarray}
U^{n}_{1}(t)&=&\int_{t}^{T}c(s^{-},X^{n}(s^{-}),V^{n}(s^{-}),
\bar{V}^{n}(s^{-}),\tilde{V}^{n}(s^{-},\cdot),u(s^{-},X^{n}(s^{-}),\cdot)ds,
\nonumber\\
U^{n}_{2}(t)&=&\int_{t}^{T}\left(\alpha(s^{-},X^{n}(s^{-}),V^{n}(s^{-}),
\bar{V}^{n}(s^{-}),\tilde{V}^{n}(s^{-},\cdot),\right.
\nonumber\\
&&\;\;\;\;\;\;\;\;\;\;\;\;\;\;\;\;\;\;\;
\left.u(s^{-},X^{n}(s^{-})),\cdot)-\bar{V}^{n}(s^{-})\right)dW(s)
\nonumber\\
&&+\int_{t}^{T}\int_{{\cal
Z}^{h}}\left(\zeta(s^{-},X^{n}(s^{-}),V^{n}(s^{-}),\bar{V}^{n}(s^{-}),
\tilde{V}^{n}(s^{-},z),\right.
\nonumber\\
&&\;\;\;\;\;\;\;\;\;\;\;\;\;\;\;\;\;\;\;
\left.u(s^{-},X^{n}(s^{-})),z,\cdot)-\tilde{V}^{n}(s^{-},z)\right)\tilde{N}(ds,dz),
\nonumber\\
U^{n}_{3}(t)&=&\int_{t}^{T}\bar{V}^{n+1}(s^{-})dW(s)
+\int_{t}^{T}\int_{{\cal
Z}^{h}}\tilde{V}^{n+1}(s^{-},z)\tilde{N}(ds,dz). \nonumber
\end{eqnarray}
Furthermore, $(X^{n+1},Y^{n+1})$ satisfies the property (3) in
Definition~\ref{srbm}. In other words, $Y^{n+1}$ is a
$b$-dimensional $\{{\cal F}_{t}\}$-adapted process such that the
$i$th component $Y^{n+1}_{i}$ of $Y^{n+1}$ for each
$i\in\{1,...,b\}$ ${\bf P}$-a.s. has the properties that
$Y^{n+1}_{i}(0)=0$, $Y^{n+1}_{i}$ is non-decreasing, and
$Y^{n+1}_{i}$ can increase only when $X^{n+1}$ is on the boundary
face $D_{i}$, i.e.,
\begin{eqnarray}
\int_{0}^{t}I_{D_{i}}(X^{n+1}(s))dY^{n+1}_{i}(s) =
Y^{n+1}_{i}(t)\;\;\mbox{for all}\;\;t\geq 0. \elabel{nxyzero}
\end{eqnarray}
Similarly, $(V^{n+1},F^{n+1})$ also satisfies the property (3) in
Definition~\ref{srbm}. More precisely, $F^{n+1}$ is a
$q$-dimensional $\{{\cal F}_{t}\}$-adapted process such that the
$i$th component $F^{n+1}_{i}$ of $F^{n+1}$ for each
$i\in\{1,...,\bar{b}\}$ ${\bf P}$-a.s. has the properties that
$F^{n+1}_{i}(0)=0$, $F^{n+1}_{i}$ is non-decreasing, and
$F^{n+1}_{i}$ can increase only when $V^{n+1}$ is on the boundary
face $\bar{D}_{i}$, i.e.,
\begin{eqnarray}
\int_{0}^{t}I_{\bar{D}_{i}}(V^{n+1}(s))dF^{n+1}_{i}(s) =
F^{n+1}_{i}(t)\;\;\mbox{for all}\;\;t\geq 0. \elabel{nxyzeroI}
\end{eqnarray}

Next, we prove that the following sequence of stochastic processes
along $n\in\{1,2,...\}$,
\begin{eqnarray}
&&\Xi^{n}=((X^{n+1},Y^{n+1}),(V^{n+1},\bar{V}^{n+1},
\tilde{V}^{n+1},F^{n+1})),\;\;
(X^{1},V^{1},\bar{V}^{1},\tilde{V}^{1})=0, \elabel{sevensp}
\end{eqnarray}
is relatively compact in the Skorohod topology over the space
\begin{eqnarray}
{\cal P}[0,T]&\equiv& D^{2}_{{\cal F}}([0,T], R^{p})\times
D^{2}_{{\cal F}}([0,T], R^{b})
\elabel{sevenp}\\
&&\times D^{2}_{{\cal F}}([0,T], R^{q})\times D^{2}_{{\cal
F},p}([0,T], R^{q\times d}) \nonumber\\
&&\times D^{2}_{{\cal F},p}([0,T]\times{\cal Z}^{h}, R^{q\times
h})\times D^{2}_{{\cal F}}([0,T], R^{q\times\bar{b}}). \nonumber
\end{eqnarray}
Along the line of Dai~\cite{dai:broapp,dai:optrat}, Dai and
Dai~\cite{daidai:heatra}, and by Corollary 7.4 in page 129 of Ethier
and Kurtz~\cite{ethkur:marpro}, it suffices to prove the following
two conditions to be true: First, for each $\epsilon>0$ and rational
$t>0$, there is a constant $C(\epsilon,t)$ such that
\begin{eqnarray}
\liminf_{n\rightarrow\infty}P\left\{\left\|\Xi^{n}\right\|^{2}\leq
C(\epsilon,t)\right\}\geq 1-\epsilon; \elabel{conda}
\end{eqnarray}
Second, for each $\epsilon>0$ and $T>0$, there is a constant
$\delta>0$ such that
\begin{eqnarray}
\limsup_{n\rightarrow\infty}P\left\{w(\Xi^{n},\delta,T)\geq\epsilon\right\}
\leq\epsilon. \elabel{condb}
\end{eqnarray}

To prove the two conditions stated in \eq{conda} and \eq{condb}, we
first define the norm along each sample path
\begin{eqnarray}
\left\|f\right\|_{[a,b]}&=&\sup_{a\leq t\leq b}\left\|f(t)\right\|
\nonumber
\end{eqnarray}
for each $f\in\{X^{n}, Z^{n}, U^{n},
(V^{n},\bar{V}^{n},\tilde{V}^{n})\}$ and each $a,b\in[0,T]$. Then,
we introduce the space for some constant $\gamma>0$ that will be
chosen and explained in the following proof,
\begin{eqnarray}
{\cal Q}_{\gamma}[0,T]&\equiv& D^{2}_{{\cal F}}([0,T], R^{p})\times
D^{2}_{{\cal F}}([0,T], R^{q})\times D^{2}_{{\cal F},p}([0,T],
R^{q\times d})
\elabel{mspace}\\
&&\times D^{2}_{{\cal F},p}([0,T]\times{\cal Z}^{h}, R^{q\times h})
\nonumber
\end{eqnarray}
endowed with the norm
\begin{eqnarray}
&&\left\|(X,V,\bar{V},\tilde{V})\right\|^{2}_{{\cal
Q}_{\gamma}[0,T]}
\elabel{fbnorm}\\
&\equiv&E\left[\sup_{t\in[0,T]}\left(\|X(t)\|^{2}+\|V(t)\|^{2}\right)e^{2\gamma
t}\right]+E\left[\int_{0}^{T}\left\|\bar{V}(t)\right\|^{2}e^{2\gamma
t}dt\right]
\nonumber\\
&&+E\left[\int_{0}^{T}\left\|\tilde{V}(t,\cdot)\right\|^{2}_{\nu}
e^{2\gamma t}dt\right] \nonumber
\end{eqnarray}
for each $(X,V,\bar{V},\tilde{V})\in{\cal Q}_{\gamma}[0,T]$. Thus,
by Lemma~\ref{osclemma}, there is a positive constant $C_{1}$ such
that
\begin{eqnarray}
&&\left\|(X^{n+1},Y^{n+1})(t)\right\|
\elabel{ineqI}\\
&\leq&\left\|(X^{n+1},Y^{n+1})(0)\right\|+\kappa\mbox{Osc}(Z^{n},[0,T])
\nonumber\\
&\leq&
C_{1}\left(\left\|X(0)\right\|+\left\|Z^{n}\right\|_{[0,T]}\right),
\nonumber
\end{eqnarray}
and
\begin{eqnarray}
&&\left\|(V^{n+1},\bar{V}^{n+1},\tilde{V}^{n+1}(\cdot),F^{n+1})(t)\right\|
\elabel{ineqII}\\
&\leq&\left\|(V^{n+1},\bar{V}^{n+1},\tilde{V}^{n+1}(\cdot))(t)\right\|+\left\|F^{n+1}(t)\right\|
\nonumber\\
&\leq&\left\|(V^{n+1},\bar{V}^{n+1},\tilde{V}^{n+1}(\cdot))(T)\right\|
+\left\|F^{n+1}(0)\right\| +2\kappa\mbox{Osc}(U^{n},[0,T])
\nonumber\\
&\leq&\bar{C}_{1}\left(\left\|(V^{n}(T)\right\|
+\left\|F^{n}(T)\right\| +\left\|U^{n}\right\|_{[0,T]}\right)
\nonumber\\
&\leq& \bar{C}_{2}\left(1+\left\|X^{n}(T)\right\|
+\left\|U^{n-1}\right\|_{[0,T]}
+\left\|U^{n}\right\|_{[0,T]}\right), \nonumber\\
&\leq& C_{1}\left(1+\|X(0)\|+\left\|Z^{n-1}\right\|_{[0,T]}
+\left\|U^{n-1}\right\|_{[0,T]}
+\left\|U^{n}\right\|_{[0,T]}\right), \nonumber
\end{eqnarray}
where, $\bar{C}_{1}$ and $\bar{C}_{2}$ are some nonnegative
constants. Furthermore, we have taken $(\bar{V}^{n+1}$,
$\tilde{V}^{n+1}(\cdot))(T)=0$ in the third equality of \eq{ineqII}
since the uniqueness for the Martingale representation is in the
sense of up to sets of measure zero in $(t,\omega)$ (see, e.g.,
Theorem 4.3.4 in page 53 of $\emptyset$ksendal~\cite{oks:stodif} and
Theorem 5.3.5 in page 266 of Applebaum~\cite{app:levpro}).

Thus, for each $n\in\{1,2,...\}$, the given linear growth constant
$L\geq 0$ in \eq{fbconI}, and any constant $K>LT$, it follows from
the Markov's inequality that
\begin{eqnarray}
P\left\{\left\|Z^{n}_{1}\right\|_{T}\geq K\right\}
&\leq&\frac{LT}{K-LT}E\left[\left\|(X^{n},V^{n},\bar{V}^{n},
\tilde{V}^{n}(\cdot))\right\|_{[0,T]}\right]. \elabel{ineqI}
\end{eqnarray}
Furthermore, by Lemma 4.2.8 in page 201 of
Applebaum~\cite{app:levpro} (or related theorem in page 20 of Gihman
and Skorohod~\cite{gihsko:stodif}) and the linear growth condition,
we know that
\begin{eqnarray}
P\left\{\left\|Z^{n}_{2}\right\|_{T}\geq K\right\}
&\leq&\frac{\bar{K}}{K^{2}}+\frac{L^{2}T}{\bar{K}-L^{2}T}
E\left[\left\|(X^{n},V^{n},\bar{V}^{n},\tilde{V}^{n}(\cdot))\right\|^{2}_{[0,T]}\right]
\elabel{ineqII}
\end{eqnarray}
for all nonnegative constant $\bar{K}>L^{2}T$. In addition, similar
to the illustration of \eq{ineqI}, we have that
\begin{eqnarray}
P\left\{\left\|U^{n}_{1}\right\|_{T}\geq K\right\}
&\leq&\frac{1}{K-LT}E\left[\left\|(X^{n},V^{n},\bar{V}^{n},
\tilde{V}^{n}(\cdot))\right\|_{[0,T]}\right]. \elabel{ineqIU}
\end{eqnarray}
Next, by the similar demonstration for \eq{ineqII} and the linear
growth condition, we know that
\begin{eqnarray}
P\left\{\left\|U^{n}_{2}\right\|_{T}\geq K\right\}
&\leq&\frac{\bar{K}}{K^{2}}+\frac{L^{2}T}{\bar{K}-L^{2}T}
E\left[\left\|(X^{n},V^{n},\bar{V}^{n},\tilde{V}^{n}(\cdot))\right\|^{2}_{[0,T]}\right].
\elabel{ineqIIUa}
\end{eqnarray}
Furthermore, by the proof of Theorem~\ref{bsdeyI}, we have that
\begin{eqnarray}
P\left\{\left\|U^{n}_{3}\right\|_{T}\geq K\right\}
&\leq&\frac{\bar{K}}{K^{2}}+\frac{\bar{K}_{1}T}{(\bar{K}-L^{2}T)^{2}}
\elabel{ineqIIU}\\
&&+\frac{\bar{K}_{2}}{(\bar{K}-L^{2}T)^{2}}
E\left[\left\|(X^{n},V^{n},\bar{V}^{n},\tilde{V}^{n})\right\|^{2}_{{\cal
Q}_{\gamma}[0,T]}\right]
\nonumber
\end{eqnarray}
for some nonnegative constants $\bar{K}_{1}$ and $\bar{K}_{2}$.
Therefore, for each given $\epsilon>0$, it follows from
\eq{ineqI}-\eq{ineqIIU}, suitably chosen constants $K$ and
$\bar{K}$, and the initial condition in \eq{sevensp} that there is a
nonnegative constant $C$ such that
\begin{eqnarray}
&&\inf_{n}P\left\{\left\|\Xi^{n}(t)\right\|\leq
C,\;0\leq t\leq T\right\} \elabel{ineqIIo}\\
&\geq&\inf_{n}\min\left\{P\left\{\left\|(X^{n+1},Y^{n+1})(t)\right\|\leq
C,\;0\leq t\leq
T\right\}\right.,\nonumber\\
&&\;\;\;\;\;\;\;\;\;\;\;\;\;
\left.P\left\{\left\|(V^{n+1},\bar{V}^{n+1},\tilde{V}^{n+1},F^{n+1})(t)\right\|\leq
C,\;0\leq t\leq T\right\}\right\}\nonumber\\
&\geq&1-\epsilon. \nonumber
\end{eqnarray}
Thus, the condition in \eq{conda} is satisfied by the sequence of
$\{\Xi^{n}\}$.

Now, for any $t\in[0,T]$, it follows from the proof of Proposition
18 for a BSDE with jumps in Dai~\cite{dai:meavar} and
Lemma~\ref{osclemma} that
\begin{eqnarray}
&&\left\|(U^{n},\bar{V}^{n}, \tilde{V}^{n})\right\|^{2}_{{\cal
Q}_{\gamma}[t,T]}
\elabel{IineqIqo}\\
&\leq&K_{\gamma}\left(2L^{2}(T-t)+\left\|(X^{n-1},V^{n-1},\bar{V}^{n-1},
\tilde{V}^{n-1})\right\|^{2}_{{\cal Q}_{\gamma}[t,T]}\right)
\nonumber\\
&\leq&K_{\gamma}\left(2L^{2}(T-t)+e^{2\gamma
T}E\left[\left\|V^{n-1}\right\|^{2}_{[t,T]}\right]+e^{2\gamma T}
\int_{t}^{T}E\left[\left\|X^{n-1}\right\|^{2}_{[0,s]}\right]ds\right)
\nonumber\\
&&+K_{\gamma}\left\|(U^{n-1},\bar{V}^{n-1},\tilde{V}^{n-1})\right\|^{2}_{{\cal
Q}_{\gamma}[t,T]},
\nonumber
%&\leq&K_{\gamma}\left(2L^{2}(T-t)+2\kappa^{2}e^{\gamma
%T}E\left[\left\|V(T)\right\|^{2}\right]\right)\nonumber\\
%&&+K_{\gamma}\left(2\kappa^{2}e^{2\gamma T}+1\right)
%\left\|(U^{n-1},\bar{V}^{n-1},\tilde{V}^{n-1})\right\|^{2}_{{\cal
%Q}_{\gamma}[t,T]}\nonumber
\end{eqnarray}
where, $K_{\gamma}<1$ depending only on $L$, $T$, $d$, and $h$ for
some suitable chosen $\gamma>0$. Thus, by Lemma~\ref{osclemma}, the
It$\hat{o}$'s isometry formula, and \eq{IineqIqo}, we have that
\begin{eqnarray}
&&E\left[\left\|V^{n}\right\|^{2}_{[t,T]}\right]
\elabel{IineqIq}\\
&\leq&\bar{K}_{1}\left(E\left[\|V^{n}(T)\|^{2}\right]+E\left[\|F^{n-1}(T)\|^{2}\right]+\kappa^{2}
E\left[\mbox{Osc}(U^{n-1},[t,T])^{2}\right]\right)
\nonumber\\
&\leq&K_{1}\left(1+E\left[\|X^{n}\|^{2}_{[0,T]}\right]+\kappa^{2}
E\left[\mbox{Osc}(U^{n-2},[0,T])^{2}\right]+\kappa^{2}
E\left[\mbox{Osc}(U^{n-1},[t,T])^{2}\right]\right)
\nonumber\\
&\leq&K_{1}\left(1+24\kappa^{2}L^{2}T^{2}
+24\kappa^{2}L^{2}(T-t)^{2}\right)
+K_{1}E\left[\|X^{n}\|^{2}_{[0,T]}\right]
\nonumber\\
&&+24K_{1}\kappa^{2}L^{2}T\left(\int_{0}^{T}E\left[\left\|X^{n-2}\right\|^{2}_{[0,s]}\right]ds
+E\left[\left\|V^{n-2}\right\|^{2}_{[0,T]}\right]\right)
\nonumber\\
&&+24K_{1}\kappa^{2}L^{2}(T-t)\left(\int_{t}^{T}E\left[\left\|X^{n-1}\right\|^{2}_{[0,s]}\right]ds
+E\left[\left\|V^{n-1}\right\|^{2}_{[t,T]}\right]\right)
\nonumber\\
&&+24K_{1}\kappa^{2}L^{2}T\left\|(U^{n-2},\bar{V}^{n-2},
\tilde{V}^{n-2})\right\|^{2}_{{\cal Q}_{\gamma}[0,T]}
\nonumber\\
&&+4K_{1}\kappa^{2}\left\|(U^{n-1},\bar{V}^{n-1},
\tilde{V}^{n-1})\right\|^{2}_{{\cal Q}_{\gamma}[0,T]}
\nonumber\\
&&+24K_{1}\kappa^{2}L^{2}(T-t)\left\|(U^{n-1},\bar{V}^{n-1},
\tilde{V}^{n-1})\right\|^{2}_{{\cal Q}_{\gamma}[t,T]}
\nonumber\\
&&+4K_{1}\kappa^{2}\left\|(U^{n},\bar{V}^{n},
\tilde{V}^{n})\right\|^{2}_{{\cal Q}_{\gamma}[t,T]}
\nonumber\\
&\leq&K_{3}+K_{2}\left(\int_{0}^{T}E\left[\left\|X^{n-1}\right\|^{2}_{[0,s]}
\right]ds+\left\|(U^{n-2},\bar{V}^{n-2},
\tilde{V}^{n-2})\right\|^{2}_{{\cal Q}_{\gamma}[0,T]}\right.
\nonumber\\
&&\left.+\left\|(U^{n-1},\bar{V}^{n-1},
\tilde{V}^{n-1})\right\|^{2}_{{\cal Q}_{\gamma}[0,T]}
+\left\|(U^{n},\bar{V}^{n}, \tilde{V}^{n})\right\|^{2}_{{\cal
Q}_{\gamma}[0,T]}\right), \nonumber
\end{eqnarray}
where, $K_{i}$ for $i\in\{1,2,3\}$ are some nonnegative constants
depending only on $T$, $L$, $\kappa$, and
$E\left[\|V(T)\|^{2}\right]$. Furthermore, for any $t\in[0,T]$, we
have that
\begin{eqnarray}
E\left[\left\|X^{n}\right\|^{2}_{[0,t]}\right]
&\leq&2E\left[\left\|X(0)\right\|^{2}\right]+2\kappa^{2}
E\left[\mbox{Osc}(Z^{n-1},[0,t])^{2}\right]
\elabel{IineqIw}\\
&\leq&2E\left[\left\|X(0)\right\|^{2}\right] +6\kappa^{2}L^{2}t^{2}
\nonumber\\
&&+6\kappa^{2}L^{2}t\left(\int_{0}^{t}
E\left[\left\|X^{n-1}\right\|_{[0,s]}^{2}\right]ds
+E\left[\left\|V^{n-1}\right\|^{2}_{[0,T]}\right]\right)
\nonumber\\
&&+6\kappa^{2}L^{2}t\left\|(U^{n-1},\bar{V}^{n-1},
\tilde{V}^{n-1})\right\|^{2}_{{\cal Q}_{\gamma}[0,T]}
\nonumber\\
&\leq&2E\left[\left\|X(0)\right\|^{2}\right] +12\kappa^{4}L^{2}t^{2}
+6\kappa^{2}L^{2}tE\left[\left\|V^{2}(T)\right\|\right] \nonumber\\
&&+6\kappa^{2}L^{2}t\int_{0}^{t}
E\left[\left\|X^{n-1}\right\|_{[0,s]}^{2}\right]ds
\nonumber\\
&&+6\kappa^{2}L^{2}t\left(1+2\kappa^{2}\right)\left\|(U^{n-1},\bar{V}^{n-1},
\tilde{V}^{n-1})\right\|^{2}_{{\cal Q}_{\gamma}[0,T]}. \nonumber
\end{eqnarray}

Therefore, for any $\epsilon>0$ and a constant $\delta>0$, consider
a finite set $\{t_{l}\}$ of points satisfying
$0=t_{0}<t_{1}<...<t_{m}=T$ and $t_{l}-t_{l-1}=\delta<\epsilon/L$
with $l\in\{1,...m\}$. It follows from \eq{sevensp},
\eq{IineqIqo}-\eq{IineqIw}, and the similar explanation for
\eq{ineqI} that
\begin{eqnarray}
&&P\left\{w(Z^{n}_{1},\delta,T)\geq\epsilon\right\}
\elabel{ineqIw}\\
&\leq&\frac{3L^{2}\delta}{(\epsilon-L\delta)^{2}}
\left(E\left[\left\|X^{n}\right\|^{2}_{[0,T]}
+\left\|V^{n}\right\|^{2}_{[0,T]}\right]+\left\|(U^{n},\bar{V}^{n},
\tilde{V}^{n})\right\|^{2}_{{\cal Q}_{\gamma}[0,T]}\right)
\nonumber\\
&\leq&\frac{3L^{2}\delta}{(\epsilon-L\delta)^{2}}
\left(A_{0}+\sum_{k=1}^{n}\frac{A^{k+1}_{1}T^{k+1}}{(k+1)!}
\left(1+K^{k}_{\gamma}\right)+A_{2}\sum_{k=1}^{n}K^{k}_{\gamma}\right),
\nonumber
\end{eqnarray}
where $A_{0}$, $A_{1}$, and $A_{2}$ are some constants depending
only on $L$, $T$, $d$, and $h$. Furthermore, by Lemma 4.2.8 in page
201 of Applebaum~\cite{app:levpro} (or related theorem in page 20 of
Gihman and Skorohod~\cite{gihsko:stodif}) and the linear growth
condition, we know that
\begin{eqnarray}
&&P\left\{w(Z^{n}_{2},\delta,T)\geq\epsilon\right\}
\elabel{ineqIIw}\\
&\leq&\frac{\bar{\epsilon}}{\epsilon^{2}}+\frac{3L^{2}}{\bar{\epsilon}-3L^{2}\delta}
\left(\delta E\left[\left\|X^{n}\right\|_{T}^{2}\right] +\delta
E\left[\left\|V^{n}\right\|_{T}^{2}\right]
+E\left[\left\|(U^{n},\bar{V}^{n},\tilde{V}^{n})\right\|^{2}_{{\cal
Q}_{\gamma}[0,T]}\right]\right) \nonumber\\
&\leq&\frac{\bar{\epsilon}}{\epsilon^{2}}+\frac{3L^{2}}{\bar{\epsilon}-3L^{2}\delta}
\left(\delta\left(A_{0}+\sum_{k=1}^{n}\frac{A^{k+1}_{1}T^{k+1}}{(k+1)!}
\left(1+K^{k}_{\gamma}\right)+A_{2}\sum_{k=1}^{n}K^{k}_{\gamma}\right)
+A_{3}\sum_{k=1}^{n}K^{k}_{\gamma}\right) \nonumber
\end{eqnarray}
for all nonnegative constant $\bar{\epsilon}>3L^{2}\delta$, where
$A_{3}$ is some constant depending only on $L$, $T$, $d$, and $h$.

Similarly, there are some constants $B_{0}$, $B_{1}$, $B_{2}$, and
$B_{3}$ depending only on $L$, $T$, $d$, and $h$ such that
\begin{eqnarray}
&&P\left\{w(U^{n}_{1},\delta,T)\geq\epsilon\right\}
\elabel{ineqIws}\\
&\leq&\frac{3L^{2}\delta}{(\epsilon-L\delta)^{2}}
\left(B_{0}+\sum_{k=1}^{n}\frac{B^{k+1}_{1}T^{k+1}}{(k+1)!}
\left(1+K^{k}_{\gamma}\right)+B_{2}\sum_{k=1}^{n}K^{k}_{\gamma}\right),
\nonumber
\end{eqnarray}
and
\begin{eqnarray}
&&P\left\{w(Z^{n}_{2},\delta,T)\geq\epsilon\right\}
\elabel{ineqIIws}\\
&\leq&\frac{\bar{\epsilon}}{\epsilon^{2}}+\frac{3L^{2}}{\bar{\epsilon}-3L^{2}\delta}
\left(\delta\left(B_{0}+\sum_{k=1}^{n}\frac{B^{k+1}_{1}T^{k+1}}{(k+1)!}
\left(1+K^{k}_{\gamma}\right)+B_{2}\sum_{k=1}^{n}K^{k}_{\gamma}\right)
+B_{3}\sum_{k=1}^{n}K^{k}_{\gamma}\right). \nonumber
\end{eqnarray}
Hence, for each given $\epsilon>0$, it follows from
\eq{ineqIw}-\eq{ineqIIws} and suitably chosen constants
$\bar{\epsilon}$, $\delta$, and $\gamma$ that
\begin{eqnarray}
&&\limsup_{n\rightarrow\infty}
P\left\{w(\Xi^{n}),\delta,T)\geq\epsilon\right\}\leq \epsilon.
\elabel{ineqIII}
\end{eqnarray}
Thus, the condition in \eq{condb} is true for the sequence of
$\{\Xi^{n}\}$. Hence, by \eq{ineqII}, \eq{ineqIII}, and Corollary
7.4 in page 129 of Ethier and Kurtz~\cite{ethkur:marpro}, this
sequence is relatively compact. Therefore, there is a subsequence of
$\{\Xi^{n}\}$ that converges weakly to
$\Xi\equiv((X,Z,Y),(V,\bar{V},\tilde{V},F))$ over the space ${\cal
P}[0,T]$. For convenience, we suppose that the subsequence is the
sequence itself, i.e.,
\begin{eqnarray}
&&\Xi^{n}\Rightarrow\Xi. \elabel{conlimit}
\end{eqnarray}
Then, by the Skorohod representation theorem (see, e.g., Theorem 1.8
in page 102 of Ethier and Kurtz~\cite{ethkur:marpro}), we can assume
that the convergence in \eq{conlimit} is a.s. in the Skorohod
topology. Thus, by the claim (a) in Theorem 1.14 (or the claim (a)
in Proposition 2.1) of Jacod and Shiryaev~\cite{jacshi:limthe} and
the facts that $Y^{n+1}(0)=0$ and $Y^{n+1}$ is nondecreasing, we can
conclude that $Y(0)=0$ and $Y$ is nondecreasing. Furthermore, by
Lemma~\ref{complimentlemma} and \eq{nxyzero}
\begin{eqnarray}
&&\int_{0}^{t}I_{D_{i}}(X(s))dY_{i}(s)=Y_{i}(t)\;\mbox{for
all}\;\;t\geq 0,\;i\in\{1,...,b\}. \elabel{lxyzero}
\end{eqnarray}
Similarly, we know that $F(0)=0$, $F$ is non-decreasing, and
\begin{eqnarray}
&&\int_{0}^{t}I_{\bar{D}_{i}}(V(s))dF_{i}(s) = F_{i}(t)\;\;\mbox{for
all}\;\;t\geq 0,\;i\in\{1,...,\bar{b}\}.
\elabel{nxyzeroIl}
\end{eqnarray}
Therefore, by the Lipschitz condition in \eq{fbconII}, we know that
$((X,Y),(V,\bar{V},\tilde{V},F))$ satisfies the FB-SDEs in
\eq{bsdehjb} a.s. Thus, by the Skorohod representation theorem
again, it is a weak solution to the FB-SDEs in \eq{bsdehjb}.

\vskip 0.3cm {\bf Part A (Uniqueness).} Assume that
$((X,Y),(V,\bar{V},\tilde{V},F))$ is a weak solution to the FB-SDEs
in \eq{bsdehjb}. To prove its uniqueness, we introduce some
additional notations. Let $D_{\emptyset}=D$,
$\bar{D}_{\emptyset}=\bar{D}$, and define
\begin{eqnarray}
&&D_{K}\equiv\cap_{i\in
K}D_{i},\;\;\bar{D}_{\bar{K}}\equiv\cap_{i\in\bar{K}}\bar{D}_{i}
\elabel{ddI}
\end{eqnarray}
for each $\emptyset\neq K\subset\{1,...,b\}$ and each
$\emptyset\neq\bar{K}\subset\{1,...,\bar{b}\}$. In the sequel, we
call a set $K\in\{1,...,b\}$ ``maximal" if $K\neq\emptyset$,
$D_{K}\neq\emptyset$, and $D_{K}\neq D_{\tilde{K}}$ for any
$\tilde{K}\supset K$ such that $\tilde{K}\neq K$. Similarly, we can
define the maximal set corresponding to a set
$\bar{K}\in\{1,...,\bar{b}\}$. Furthermore, let $d(x,D_{K})$ and
$d(\bar{x},\bar{D}_{\bar{K}})$ respectively denote the Euclidean
distance between $x$ and $D_{K}$ for a point $x\in D$ and the
Euclidean distance between a point $\bar{x}\in\bar{D}$ and
$\bar{D}_{\bar{K}}$. Then, it follows from Lemma 3.2 in
Dai~\cite{dai:broapp} or Lemma B.1 in Dai and
Williams~\cite{daiwil:exiuni} that there exist two constants $C\geq
1$ and $\bar{C}\geq 1$ such that
\begin{eqnarray}
&&d(x,D_{K})\leq C\sum_{i\in K}(n_{i}\cdot
x-b_{i}),\;\;\bar{d}(\bar{x},\bar{D}_{\bar{K}})\leq
\bar{C}\sum_{i\in\bar{K}}(\bar{n}_{i}\cdot\bar{x}-\bar{b}_{i}).
\elabel{ddII}
\end{eqnarray}
Now, for each $\epsilon\geq 0$, $K\in\{1,...,b\}$, and
$\bar{K}\in\{1,...,\bar{b}\}$ (including the empty set), we let
\begin{eqnarray}
D^{\epsilon}_{K}&\equiv&\left\{x\in R^{q}:0\leq n_{i}\cdot
x-b_{i}\leq C_{\epsilon}\;\;\mbox{for all}\;\;i\in
K,\right.\elabel{ddIII}\\
&&\;\;\left.n_{i}\cdot x-b_{i}>\epsilon\;\;\mbox{for
all}\;\;i\in\{1,...,b\}\setminus K\right\},
\nonumber\\
\bar{D}^{\epsilon}_{\bar{K}}&\equiv&\left\{\bar{x}\in R^{q}:0\leq
\bar{n}_{i}\cdot\bar{x}-\bar{b}_{i}\leq\bar{C}_{\epsilon}\;\;\mbox{for
all}\;\;i\in\bar{K},\right.
\elabel{ddIV}\\
&&\;\;\left.\bar{n}_{i}\cdot\bar{x}-\bar{b}_{i}>\epsilon\;\;\mbox{for
all}\;\;i\in\{1,...,\bar{b}\}\setminus\bar{K}\right\}, \nonumber
\end{eqnarray}
where $C_{\epsilon}=Cp\epsilon$ and
$\bar{C}_{\epsilon}=\bar{C}q\epsilon$. Thus, by Lemmas 4.1-4.2 in
Dai and Williams~\cite{daiwil:exiuni}, we know that
\begin{eqnarray}
&&D=\cup_{K\in{\cal
G}}D^{\epsilon}_{K},\;\;\bar{D}=\cup_{\bar{K}\in\bar{\cal
G}}\bar{D}^{\epsilon}_{\bar{K}},
\elabel{ddV}
\end{eqnarray}
where, ${\cal G}$ is the collection of subsets of $\{1,...,b\}$
consisting of all maximal sets in $\{1,...,b\}$ and $\bar{{\cal G}}$
is defined in the same way in terms of subsets of
$\{1,...,\bar{b}\}$. For convenience, we order the sets in ${\cal
G}$ and $\bar{{\cal G}}$. Then, we can define a sequence of
3-dimensional points
$\{(r_{n},\bar{r}_{n},\tau_{n}),n\in\{1,2,...\}\}$ with $\tau_{0}=0$
by induction.

In fact, since $((X,Y),(V,\bar{V},\tilde{V},F))$ is a weak solution
to the FB-SDEs in \eq{bsdehjb}, both $X(0)$ and $V(0)$ are defined.
Thus, if $(r_{1},\bar{r}_{1})$ is the first
$K\times\bar{K}\in\{1,...,b\}\times\{1,...,\bar{b}\}$ such that
$(x,\bar{x})\in
D^{\epsilon}_{r_{1}}\times\bar{D}^{\epsilon}_{\bar{r}_{1}}$, we let
\begin{eqnarray}
&&\tau_{1}=\inf\left\{t\geq 0:\;(X(t),V(t))\notin
D^{\epsilon}_{r_{1}}\times
\bar{D}^{\epsilon}_{\bar{r}_{1}}\right\}\elabel{stopI}.
\end{eqnarray}
Furthermore, if $(r_{n},\bar{r}_{n},\tau_{n})$ has been defined on
$\{\tau_{n}<\infty\}$, we let $(r_{n+1},\bar{r}_{n+1})$ be the first
$K\times\bar{K}\in{\cal G}\times\bar{{\cal G}}$ such that
$(X(\tau_{n}),V(\tau_{n}))\in D^{\epsilon}_{K}\times
\bar{D}^{\epsilon}_{\bar{K}}$. Then, we can define
\begin{eqnarray}
&&\tau_{n+1}=\inf\left\{t\geq\tau_{n}:\;(X(t),V(t))\notin
D^{\epsilon}_{r_{n+1}}\times\bar{D}^{\epsilon}_{\bar{r}_{n+1}}\right\}
\elabel{stopnI}.
\end{eqnarray}
On $\{\tau_{n}=+\infty\}$, we define $r_{n+1}=r_{n}$,
$\bar{r}_{n+1}=\bar{r}_{n}$, and $\tau_{n+1}=\tau_{n}$. Due to the
right-continuity of the sample paths of solution $(X,V)$ by the
related property of L\'evy process driven stochastic integral (see,
e.g., Theorem 4.2.12 in page 204 of Applebaum~\cite{app:levpro}),
$\{\tau_{n}\}$ is a nondecreasing sequence of $\{{\cal
F}_{t}\}$-stopping times, satisfying $\tau_{n}\rightarrow\infty$
a.s. as $n\rightarrow\infty$.

Hence, it suffices to prove the weak uniqueness of
$((X,Y),(V,\bar{V},\tilde{V},F))(\cdot\wedge\tau_{n})$ for each $n$.
Note that both $D^{\epsilon}_{r_{n}}$ and
$\bar{D}^{\epsilon}_{\bar{r}_{n}}$ for each $n$ are subsets of
cones. Thus, without loss of generality, we assume that both $D$ and
$\bar{D}$ are cones. Therefore, we can prove the weak uniqueness by
induction in terms of the numbers of boundary faces of $D$ and
$\bar{D}$.

In fact, for the case that $b=\bar{b}=1$, it follows from the
uniqueness of the Skorohod mapping given by Lemma 3.1 in
Dai~\cite{dai:broapp} or Lemma 4.5 in Dai and
Dai~\cite{daidai:heatra} that the weak uniqueness is true. Now, we
suppose that the weak uniqueness is true for the case that
$b+\bar{b}=m\geq 2$ with $b\geq 1$ and $\bar{b}\geq 1$. Then, we can
prove the case for $b+\bar{b}=m+1$. In this case, we need to
consider two folds indexed by two pairs of $(b+1,\bar{b})$ and
$(b,\bar{b}+1)$. Both of the folds can be proved by the similar
discussion for Theorem 5.4 in Dai and Williams~\cite{daiwil:exiuni}.
Therefore, we finish the proof of weak uniqueness.

\vskip 0.3cm {\bf Part B.} We consider the case that $L(t,\omega)$
appeared in \eq{fbconI}-\eq{fbconII} is a constant and the spectral
radii of $S$ and each $p\times p$ sub-principal matrix of $N'R$ are
strictly less than one. In this case, we need to prove that there is
a unique strong adapted solution $((X,Y),(V,\bar{V},\tilde{V},F))$
to the system of in \eq{bsdehjb}.

In fact, it follows from the discussions in Reiman and
Harrison~\cite{harrei:refbro}, Dai~\cite{dai:optrat}, Lemma 7.1 and
Theorem 7.2 in pages 164-165 of Chen and Yao~\cite{cheyao:funque}
that there exist two Lipschitz continuous mappings $\Phi$ and $\Psi$
such that
\begin{eqnarray}
(X^{n+1},Y^{n+1})&=&\Phi(Z^{n})\elabel{mapI}\\
(V^{n+1},F^{n+1})&=&\Psi(U^{n})\elabel{mapII}
\end{eqnarray}
for each $n\in\{1,2,...\}$. Then, it follows from
\eq{mapI}-\eq{mapII}, the related estimates in Part A, and the
conventional Picard's iterative method, we can reach a proof for the
claim in Part B.

\vskip 0.3cm
 {\bf Part C.} We consider the case that $L(t,\omega)$
appeared in \eq{fbconI}-\eq{fbconII} is a constant and both of the
SDEs have no reflection boundaries. In this case, we need to prove
that there is a unique strong adapted solution
$((X,Y),(V,\bar{V},\tilde{V},F))$ to the system of in \eq{bsdehjb}.
In fact, by the related estimates in Part A, this case can be proved
by directly generalizing the conventional Picard's iterative method.
Actually, this case is a special one of Theorem~\ref{bsdeyI} or
Theorem~\ref{infdn}.

\vskip 0.3cm
{\bf Part D.} We consider the case that $L(t,\omega)$
appeared in \eq{fbconI}-\eq{fbconII} is a general adapted and
mean-squarely integrable stochastic process. The proofs
corresponding to the cases stated in Part A, Part B, and Part C can
be accomplished along the lines of proofs for Lemma 4.1 in
Dai~\cite{dai:meavar} associated with a forward SDE under random
environment and Proposition 18 in Dai~\cite{dai:meahed} for a
backward SDE under random environment. The key in the proofs is to
introduce the following sequence of $\{{\cal F}_{t}\}$-stopping
times, i.e.,
\begin{eqnarray}
&&\tau_{n}\equiv\inf\{t>0,\|L(t)\|>n\}\;\;\mbox{for
each}\;\;n\in\{1,2,...\}.
\end{eqnarray}
By the condition in \eq{fbconIII}, $\tau_{n}$ is nondecreasing and
a.s. tends to infinity as $n\rightarrow\infty$.

\vskip 0.3cm
Finally, by summarizing the cases presented in Part A
to Part D, we finish the proof of Theorem~\ref{fbexist}. $\Box$

\subsection{Proof of Theorem~\ref{gtheoremo}}

For a control process $u^{*}\in{\cal C}$, it follows from
Theorem~\ref{bsdeyI} that the $(r,q+1)$-dimensional FB-SPDEs in
\eq{fbspdef} with the partial differential operators $\{\bar{{\cal
L}},\bar{{\cal J}},\bar{{\cal I}}\}$ given by
\eq{paretoopto}-\eq{paretooptII} and terminal condition in
\eq{paretooptIII} indeed admits a well-posed 4-tuple solution
$(U(t,x)$, $V(t,x)$, $\bar{V}(t,x)$, $\tilde{V}(t,x,\cdot))$. Thus,
substituting
\begin{eqnarray}
(V(t),\bar{V}(t),\tilde{V}(t,\cdot))\equiv
(V(t,X(t)),\bar{V}(t,X(t)),\tilde{V}(t,X(t),\cdot)) \nonumber
\end{eqnarray}
into the system of coupled FB-SDEs in \eq{bsdehjb}, it follows from
Theorem~\ref{fbexist} that the claims in Theorem~\ref{gtheoremo} are
true. $\Box$

\section{Proof of Theorem~\ref{gtheorem}}\label{gtheoremproof}

The proof of part 1 is the direct extension of the
single-dimensional case (i.e., $p=q=1$) for the related optimal
control problem in $\emptyset$ksendal {\em et
al.}~\cite{okssul:stohjb}.

The proof of part 2 can be done as follows. For each $u\in{\cal C}$
and $\gamma(t,x)=\beta(t,x)\equiv 0$, it follows from
Theorem~\ref{gtheoremo} that the regulator processes $F(t)$ and
$Y(t)$ exist. Since they are nondecreasing with respect to time
variable $t$, the derivatives $\frac{dF}{dt}(t,x)$ and
$\frac{dY}{dt}(t,x)$ exist a.e. in terms of time variable $t$ along
each sample path a.s. Furthermore, if each $q\times q$ sub-principal
matrix of $\bar{N}'S$ and each $p\times p$ sub-principal matrix of
$N'R$ are invertible, these derivatives are uniquely determined
owing to the Skorohod mapping. Nevertheless, if only the general
completely-${\cal S}$ condition is imposed, these derivatives are
weakly unique in a probability distribution sense. In addition, it
follows from Proposition 7.1 in Ethier and
Kurtz~\cite{ethkur:marpro} that these derivatives can be
approximated by polynomials in terms of variable $x$ for each given
$t$, which are denoted by $\gamma(t,x)$ and $\beta(t,x)$. Then, the
proof for the claim in part 2 follows from the one for the claim in
part 1. Hence, we reach a proof for Theorem~\ref{gtheorem}. $\Box$

%\section{Conclusion}\label{conc}

%In this paper,

%\vskip 0.6cm \noindent{\bf Acknowledgement} This project is
%supported by Natural Science Foundation of China under grant No.
%10971249.

%This version of the paper is a slight enhancement of the previous
%version that is posted at arXiv on 4 May 2011 with the permanent
%arXiv identifier 1105.0881 and is available at
%http://arxiv.org/abs/1105.0881.

%\vskip 0.2cm

\end{document}